\tikzset{snake it/.style={decorate, decoration=snake}}
\newenvironment{subproof}[1][\proofname]{%
  \begin{proof}[#1]%
}{%
  \end{proof}%
}
\newcommand{\IPC}{\textsc{Isometric Path Cover}\xspace}
\newcommand{\dist}[2]{\mathsf{d}\left(#1,#2\right)}
\newcommand{\distG}[2]{\mathsf{d}_G\left(#1,#2\right)}
\newcommand{\anticp}[2]{A_{#1}\left(#2\right)}
\newcommand{\ipcor}[2]{ipco\left(#2, #1\right)}
\newcommand{\ipac}[1]{ipacc\left(#1\right)}
\newcommand{\ipco}[1]{ipco\left(#1\right)}
\newcommand{\sipco}[1]{sipco\left(#1\right)}
\newcommand{\set}[1]{\left\{#1\right\}}
\newcommand{\ie}{\textit{i.e.\xspace}}
\newcommand{\subpath}[3]{#1\left( #2,#3 \right)}
\definecolor{dartmouthgreen}{rgb}{0.05, 0.5, 0.06}
\newtheorem{theorem}{Theorem}
\newtheorem{Question}[theorem]{Question}
\newtheorem{proposition}[theorem]{Proposition}
\newtheorem{lemma}[theorem]{Lemma}
\newtheorem{observation}[theorem]{Observation}
\newtheorem{corollary}[theorem]{Corollary}
\newtheorem{definition}[theorem]{Definition}
\newtheorem{conjecture}[theorem]{Conjecture}
\newtheorem{claim}{Claim}
\newtheorem{remark}{Remark}
    \title{Strong isometric path complexity of graphs: Asymptotic minors, restricted holes, and graph operations \thanks{This research was partially financed by the IDEX-ISITE initiative CAP 20-25 (ANR-16-IDEX-0001), the International Research Center ``Innovation Transportation and Production Systems'' of the I-SITE CAP 20-25, and the ANR project GRALMECO (ANR-21-CE48-0004).}}
 \author{Dibyayan Chakraborty\footnote{School of Computer Science, University of Leeds, United Kingdom}
 \and Florent Foucaud\footnote{Université Clermont Auvergne, CNRS, Clermont Auvergne INP, Mines Saint-Etienne, LIMOS, 63000 Clermont-Ferrand, France.}}
\newcommand{\linegraph}[1]{L\left(#1\right)}
\newcommand{\powergraph}[2]{{#1}^{#2}}
\date{}
\begin{document}

\maketitle

\begin{abstract}

The (strong) isometric path complexity is a recently introduced graph invariant that captures how arbitrary isometric paths (i.e., shortest paths) of a graph can be viewed as a union of a few ``rooted" isometric paths (i.e., isometric paths with a common end-vertex). 
It is known that this parameter can be computed optimally in polynomial time.
Seemingly unrelated graph classes studied in metric graph theory (e.g.\ \emph{hyperbolic} graphs), geometric intersection graph theory (e.g.\ \emph{outerstring} graphs), and structural graph theory (e.g.\ \emph{(theta, prism, pyramid)-free} graphs) have been shown to have bounded strong isometric path complexity [Chakraborty et al., \textsc{Disc. Math. '25}], which renders this parameter promising in the realm of metric graph theory. 

We show that important graph classes studied in \emph{coarse graph theory} have bounded strong isometric path complexity. Specifically, we show that the strong isometric path complexity of $K_{2,t}$-asymptotic minor-free graphs is bounded. In fact, we prove a more general result. Let $U_t$ denote the graph obtained by adding a universal vertex to a path of $t-1$ edges. We show that the strong isometric path complexity of $U_t$-asymptotic minor-free graphs is bounded. This implies that $K_4^-$-asymptotic minor-free graphs, i.e., graphs that are quasi-isometric to a cactus [Fujiwara \& Papasoglu '23], have bounded strong isometric path complexity. On the other hand, $K_4$-minor-free graphs have unbounded strong isometric path complexity. Hence, for a graph $H$ on at most four vertices, $H$-asymptotic minor-free graphs have bounded (strong) isometric path complexity if and only if $H\not=K_4$.

We show that graphs whose all induced cycles of length at least 4 have the same length (also known as monoholed
graphs as defined by [Cook et al., \textsc{JCTB '24}]) form a subclass of $U_4$-asymptotic minor-free graphs. Hence, the strong isometric path complexity of monoholed graphs is bounded. On the other hand, we show that even-hole free graphs have unbounded strong isometric path complexity.

 We investigate which graph operations preserve strong isometric path complexity. We show that the strong isometric path complexity is preserved under the \emph{fixed power} and \emph{line graph} operators, two important graph operations. 
 We also show that the \emph{clique-sums} of finitely many graphs with strong isometric path complexity at most $k$, yield a graph with strong isometric path complexity at most $3k+18$.
\end{abstract}

 \newpage

 \section{Introduction}

All graphs considered in this paper are simple, undirected, unweighted, and finite. The study of the distance-metric of graphs is at the heart of \emph{metric graph theory}~\cite{bandelt2008metric} and finds numerous practical applications in computer science~\cite{bandelt2008metric}. To capture how arbitrary isometric paths (i.e., shortest paths) of an unweighted graph can be viewed as a union of {few} ``rooted'' isometric paths (i.e., isometric paths with a common end-vertex), a graph invariant known as \emph{isometric path complexity} was introduced in~\cite{chakraborty2026isometric},
where it was shown to have interesting properties and promising applicability. 
 Formally, it is defined as follows.

 Given a graph $G$ and a vertex $v\in V(G)$, an isometric path $P$ of $G$ is \emph{$v$-rooted} if $v$ is one of the end-vertices of $P$. The \emph{isometric path complexity} of a connected graph $G$, denoted by $\ipco{G}$, is the minimum integer $k$ such that there exists a vertex $v\in V(G)$ satisfying the following property: the vertices of any isometric path of $G$ can be covered by $k$ many $v$-rooted isometric paths. 

A stronger variant of the isometric path complexity, which is the main topic of this paper, was defined in~\cite{chakraborty2026isometric} as follows. 

\begin{definition}
    The \emph{strong isometric path complexity} of a connected graph $G$, denoted by $\sipco{G}$, is the minimum integer $k$ such that \textbf{every} vertex $v\in V(G)$ satisfies the following property: the vertices of any isometric path of $G$ can be covered by $k$ many $v$-rooted isometric paths.
\end{definition}

 We say that the isometric path complexity (resp. strong isometric path complexity) of a graph class $\mathcal{G}$ is bounded if there is a constant $k$ that depends only on $\mathcal{G}$ such that for all $G\in \mathcal{G}$, the isometric path complexity (resp. strong isometric path complexity) is at most $k$. 
Clearly, for any graph $G$, the isometric path complexity is upper bounded by the strong isometric path complexity. The gap between the isometric path complexity and the strong isometric path complexity can be arbitrarily large~\cite{chakraborty2026isometric}.

\medskip\noindent\textbf{Goal of the paper.} The (strong) isometric path complexity is a promising metric-based graph parameter, as it captures several interesting and well-studied graph classes (that seemingly did not have much in common) under the same umbrella. Moreover, they can be computed in polynomial time and have interesting algorithmic applications. (See the next paragraph for details.) Moreover, these concepts find relevance in the recently introduced \emph{coarse graph theory}. Hence, the properties of these parameters deserve to be studied further. In particular, the frontier between graph classes with bounded and unbounded values of the parameter, as well as the behavior of standard graph operations with respect to the parameters, are of interest. Our goal is thus to conduct such studies, with a particular focus on metric-based graph classes and graph operations, in order to extend the applicability of these concepts.

\medskip\noindent\textbf{Related work.} Originally, the notion of isometric path complexity was conceived to propose \emph{approximation algorithms} for the problem \textsc{Isometric Path Cover}~\cite{c22isom}, (also known as \textsc{Shortest Path Cover}~\cite{FERNAU2025115029}) where the objective is to find a minimum-cardinality set of isometric paths so that each vertex of the graph belongs to at least one of the solution paths. \textsc{Isometric Path Cover} has been studied recently in the algorithmic graph theory community~\cite{c22isom,chakraborty2026isometric,MFCS2024} and finds applications in machine learning~\cite{TG21}, \emph{cop decompositions} (in relation with the celebrated Cops and Robber game~\cite{cop-decs}), bus route design~\cite{c22isom} etc. 
It was shown that \textsc{Isometric Path Cover} admits a polynomial-time $c$-approximation algorithm on graphs with isometric path complexity at most $c$~\cite{chakraborty2026isometric} (while being NP-hard even on chordal graphs, which have isometric path complexity at most~4). Unlike some other metric graph parameters (e.g.\ \emph{treelength}), the {(strong) isometric path complexity can be computed optimally in polynomial time~\cite{chakraborty2026isometric}.

From the structural perspective, 
it turns out that many important graph classes such as \emph{hyperbolic} graphs, \emph{outerstring} graphs, and \emph{(theta, prism, pyramid)}-free graphs have bounded strong isometric path complexity~\cite{chakraborty2026isometric}. 
These graph classes are important on their own and are incomparable to each other with respect to inclusion. To our knowledge, having bounded isometric path complexity is the first known interesting common feature shared by these seemingly unrelated graph classes.

The notion of \emph{$\delta$-hyperbolicity}~\cite{gromov1987} measures how ``treelike'' a graph is from a metric point of view (we omit the formal definition in this paper). Graphs with constant $\delta$-hyperbolicity are called \emph{hyperbolic} graphs. From a theoretical perspective, many popular graph classes like interval graphs, chordal graphs, $\alpha_i$-metric graphs \cite{dragan2024alpha-i}, graphs with bounded tree-length~\cite{dourisboure2007tree}, link graphs of simple polygons ~\cite{chepoi2008diameters} have constant $\delta$-hyperbolicity. From a practical perspective, the study of $\delta$-hyperbolicity of graphs is motivated by the fact that many real-world graphs are tree-like \cite{AbuDra16,AdcSulMah13,JonLahBon08} or have small $\delta$-hyperbolicity \cite{BorCouCre+15,EdwardsKennedySanie2018,NaSa}. 
However, hyperbolic graphs exclude arbitrarily large cycles as \emph{isometric subgraphs}.

Outerstring graphs (and its subclasses) are studied in geometric intersection graph theory and computational geometry~\cite{an2024sparse,biedl2018size, bose2022computing,rok2019outerstring}. Popular graph classes like outerplanar graphs and circle graphs are subclasses of outerstring graphs. The class of (theta, prism, pyramid)-free graphs has been studied in structural graph theory~\cite{abrishami2022graphs,chudnovsky2024tree} as a way to extend studies of hereditary graph classes, based on the vast literature around perfect graphs, cutsets, decomposition theorems, etc.

Note that (as opposed to hyperbolic graphs) outerstring graphs and (theta, prism, pyramid)-free graphs may contain arbitrarily many large isometric cycles, although in a structured manner. See \Cref{fig:subclass} for graph classes with (un)bounded strong isometric path complexity. 
In this paper, we show that the strong isometric path complexity of graph classes important in \emph{coarse graph theory} is bounded.

\begin{figure}[t]
\centering
\scalebox{0.55}{\begin{tikzpicture}[node distance=7mm]

\tikzstyle{mybox}=[fill=white,line width=0.5mm,rectangle, minimum height=.8cm,fill=white!70,rounded corners=1mm,draw];
\tikzstyle{myboxg}=[fill=gray,line width=0.5mm,rectangle, minimum height=.8cm,fill=gray!40,rounded corners=1mm,draw]

\tikzstyle{myedge}=[line width=0.5mm]

\tikzstyle{myboxel}=[fill=white,line width=0.5mm,ellipse, minimum height=.8cm,fill=white!70,rounded corners=1mm,draw];

\newcommand{\tworows}[2]{\begin{tabular}{c}{#1}\\{#2}\end{tabular}}


    \node[mybox, line width=1mm] (ipacc) {\begin{tabular}{c}
          Bounded strong isometric path 
          complexity
     \end{tabular} };

     \node[mybox] (induced-minor) [below =of ipacc,yshift=-0.5cm] {\begin{tabular}{c}
          $U_t$-asymptotic minor-free \textbf{*}
     \end{tabular}}  edge[myedge] (ipacc);
     
     \node[mybox] (hyp) [below left=of ipacc,xshift=-1cm, yshift=-0.75cm] {Bounded hyperbolicity~\cite{chakraborty2026isometric} } edge[myedge] (ipacc);

     \node[myboxg] (k4m) [left=of ipacc,xshift=-2cm] {\begin{tabular}{c}
          $K_4$- minor-free~\cite{chakraborty2026isometric}
     \end{tabular}};

     \node[myboxg] (perfect) [left=of k4m,xshift=-1cm] {\begin{tabular}{c}
          Perfect
     \end{tabular}} ;

\node[myboxg] (bipartite) [below=of perfect,yshift=-0.55cm] {\begin{tabular}{c}
          Bipartite
     \end{tabular}} edge[myedge] (perfect);
     
     \node[myboxg] (even) [right=of ipacc,xshift=1.5cm] {\begin{tabular}{c}
          Even-hole free *
     \end{tabular}};
     
     \node[mybox] (theta) [below =of induced-minor] {\begin{tabular}{c}
          ($\ell$-theta, $\ell$-prism, $\ell$-pyramid)-free~\cite{chakraborty2026isometric}
     \end{tabular}}  edge[myedge] (induced-minor);
     
    \node[mybox] (outer) [below =of theta, yshift=-0.5cm] {\begin{tabular}{c}
          Outerstring~\cite{chakraborty2026isometric}
          
     \end{tabular}} edge[myedge] (theta) ;
     
    \node[mybox] (outerpl) [below =of outer, xshift=-4cm, yshift=0.35cm] {\begin{tabular}{c}
          Outerplanar~\cite{chakraborty2026isometric}
          
     \end{tabular}} edge[myedge] (outer) ;
     
     \node[mybox] (treelength) [below=of hyp,] {Bounded tree-length~\cite{c22isom}} edge[myedge] (hyp);
     
     \node[mybox] (chordality) [below=of treelength, yshift=-0.4cm] {Bounded chordality~\cite{c22isom}} edge[myedge] (treelength);
  
     \node[mybox] (bdiameter) [below=of treelength, xshift=-4.5cm, yshift=-0.4cm] {Bounded diameter~\cite{c22isom}} edge[myedge] (treelength);

    \node[mybox] (same-hole) [below right=of induced-minor,xshift=1.5cm, yshift=-0.25cm] {\begin{tabular}{c}
          Monoholed \textbf{*}
     \end{tabular}}  edge[myedge] (induced-minor);

    \node[mybox] (chordal) [below=of outer, yshift=-0.5cm] {Chordal~\cite{c22isom}} edge[myedge] (outer); \draw[myedge] (chordal.west) -| (chordality.south) ; \draw[myedge] (chordal.east) -| (same-hole.south) ;

     \node[myboxel] (line) [above left=of ipacc,xshift=-1cm, yshift=0.5cm] {\begin{tabular}{c}
          Line graph \textbf{*}
     \end{tabular}}  edge[snake, line width=0.5mm] (ipacc);

     \node[myboxel] (pow) [above =of ipacc, yshift=0.5cm] {\begin{tabular}{c}
          Graph powers \textbf{*}
     \end{tabular}}  edge[snake,line width=0.5mm] (ipacc);


\node[myboxel] (cliquesum) [above right=of ipacc, xshift=1cm, yshift=0.5cm] {\begin{tabular}{c}
         Clique-sum \textbf{*}
     \end{tabular}}  edge[snake,line width=0.5mm] (ipacc);

     \draw[myedge]     (outerpl.north) 
     |- (k4m.east);
     \draw[myedge]     (chordal.east) -| (even.south);
     \draw[myedge]     (chordal.west) --++ (-13,0) |- (perfect.west);
  \end{tikzpicture}}
  \caption{Inclusion diagram for graph classes.
If a class $A$ has an upward path to class $B$, then $A$ is included in $B$. Constant bounds for the strong isometric path complexity
on graph classes and graph operations marked with \textbf{*} are contributions of this paper. The boxes marked in gray indicate graph classes with unbounded isometric path complexity. Elliptical boxes represent graph operations.}\label{fig:subclass}
  \end{figure}

\medskip\noindent\textbf{Asymptotic minors and quasi-isometry.} Recently, Georgakopoulos and Papasoglu~\cite{georgakopoulos2023graph} stated a number of results and conjectures about \emph{coarse} properties of graphs, establishing a basis of a new area called \emph{coarse graph theory}. In this area, the objective is to ``look at graphs from far away'' and to understand their ``large-scale geometry''. Many problems and questions posed in~\cite{georgakopoulos2023graph} involve \emph{asymptotic minors} and \emph{fat minors}, and have garnered recent attention~\cite{fujiwara2023coarse, albrechtsen2023structural, albrechtsen2024characterisation, albrechtsen2024asymptotic,davies2024fat}. 
A graph $G$ contains a graph $H$ as a \emph{$K$-fat minor} if it is possible to find in $G$, (i) a collection of connected subgraphs called \emph{branch sets} corresponding to $V(H)$, and a collection of paths called \emph{branch paths} corresponding to $E(H)$ such that (i) after contracting each branch set to a vertex, and each branch path to an edge, we obtain a copy of $H$, and (ii) the above sets are at pairwise distance at least $K$, except for incident branch set–branch paths pairs, which must be at distance $0$ by definition. A graph class $\cal G$ contains $H$ as an \emph{asymptotic minor} if for every positive integer $K$, there exists a graph $G\in \cal G$, such that $G$ contains $H$ as a $K$-fat minor. Otherwise, $\cal G$ is \emph{$H$-asymptotic minor-free}.
The structure of $H$-asymptotic minor-free graphs has been studied in particular when $H=K_4$~\cite{albrechtsen2024asymptotic}, $H=K_4^-$~\cite{fujiwara2023coarse,albrechtsen2024asymptotic}, and $H=K_{2,3}$~\cite{chepoi2012constant}.

Quasi-isometry\footnote{ An \emph{$(M,A)$-quasi-isometry} between graphs $G$ and $H$ is a map $f\colon V(G)\rightarrow V(H)$ such that the following holds for fixed constants $M\geq 1, A\geq 0$, \begin{enumerate*}[label=(\alph*)]
		\item $M^{-1}\dist{x}{y}-A \leq \dist{f(x)}{f(y)} \leq M\dist{x}{y}+A$ for every $x,y\in V(G)$; and \item for every $z\in V(H)$ there is $x\in V(G)$ such that $\dist{z}{f(x)}\leq A$. 
	\end{enumerate*}
	The graphs $G$ and $H$ are \emph{quasi-isometric} if there exists a quasi-isometric map $f$ between $V(G)$ and $V(H)$. } between graph classes is an important notion in coarse graph theory. Quasi-isometry is a generalisation of {biLipschitz} maps that allows additive distortion. Embeddings with multiplicative (i.e. {biLipschitz} embeddings) or additive distortion are an area of active research~\cite{georgakopoulos2023graph,fujiwara2023coarse, albrechtsen2023structural, albrechtsen2024characterisation,fox2019embedding, dragan2006distance, chepoi2000note}. Hyperbolic graphs are closed under quasi-isometry~\cite{gromov1987}. For a fixed graph $H$, the property of excluding $H$ as an asymptotic minor is invariant under applying quasi-isometries~\cite{georgakopoulos2023graph}. On the contrary, the property of bounded (strong) isometric path complexity is not preserved under quasi-isometry (see \Cref{rem:quasi-isometry}). However, this notion has been used in some proofs regarding quasi-isometry between graph classes. For example, the notion of strong isometric path complexity has been used to show that given a $K_{2,3}$-induced minor-free\footnote{A graph $G$ contains a graph $H$ as an induced minor, if $H$ can be obtained from $G$ by deleting vertices and contracting edges. Otherwise, $G$ is $H$-induced minor-free.} graph $G$, it is possible to construct a graph $H$ in $O(nm)$-time such that $G$ is quasi-isometric to $H$ with a constant additive distortion and $H$ has treewidth at most two~\cite{chakraborty2025k}. In the special case of \emph{universally signable} graphs~\cite{vuskovic2013world}, the above quasi-isometry can be obtained in linear time. This further implies a truly-subquadratic additive approximation algorithm for \textsc{Diameter}\footnote{\textsc{Diameter} is the algorithmic problem of finding the longest isometric path in a graph.} on universally signable graphs. In contrast, assuming the \emph{Strong Exponential Time Hypothesis} (\textsc{SETH}), the diameter of split graphs (a very restricted class of universally signable graphs), cannot be computed optimally in truly sub-quadratic time \cite{borassi2016into}. However, the fact that $K_{2,3}$-induced minor-free graphs (more generally $K_{2,3}$-asymptotic minor-free graphs) are quasi-isometric to a cactus with additive distortion can be alternatively derived using known results~\cite{fujiwara2023coarse,chepoi2012constant}.

In this paper, we consider a superclass of $K_{2,3}$-asymptotic minor-free graphs and $K_4^-$-asymptotic  minor-free graphs and show that its strong isometric path complexity is bounded. Let $U_t$ denote the graph obtained by taking a path with $t$ vertices and introducing a universal vertex. See \Cref{fig:U_t}(a) for an example. We prove the following.

\begin{figure}
\centering
    \begin{tabular}{cc}
     \begin{tikzpicture}
			\foreach \x/\y [count = \n] in {0/0,1/0,2/0,3/0,4/0,2/1}
			{
				 \filldraw (\x, \y) circle (2pt);
				 
			}
			
			\foreach \x/\y/\w/\z [count = \n] in {0/0/1/0,1/0/2/0,2/0/3/0,3/0/4/0}
			{
				\draw (\x,\y) -- (\w,\z);
				\draw (2,1) -- (\x,\y);
			}
			
			\draw (2,1) -- (4,0);
	\end{tikzpicture}    & 
         \begin{tikzpicture}
            \foreach \x/\y [count = \n] in {0/0, 3/0, 1/1, 2/1, 1/0, 2/0, 1/-1, 2/-1}
			{
				 \filldraw (\x, \y) circle (2pt);
			}
            
            \foreach \x/\y/\w/\z [count = \n] in {0/0/1/1, 1/1/2/1, 0/0/2/0, 0/0/1/-1, 1/-1/2/-1}
            {
                \draw[thick] (\x,\y) -- (\w,\z);
            }
            \foreach \x/\y/\w/\z [count = \n] in {2/1/3/0, 2/0/3/0, 2/-1/3/0}
            {
                \draw[dashed] (\x,\y) -- (\w,\z);
            }
            \node[below] at (1.5,0) {$\vdots$};
        \end{tikzpicture} \\
        (a) & (b)
    \end{tabular}
	\caption{(a) The graph $U_5$. (b) The graph $F_5$. The dashed edges indicate paths.}\label{fig:U_t}
\end{figure}

\begin{theorem}\label{thm:induced}
There is a function $f\colon \mathbb{N}\times \mathbb{N}\rightarrow \mathbb{N}$ such that the strong isometric path complexity of graphs that do not contain $U_t$ as a $K$-fat minor is at most $f(K,t)$.
\end{theorem}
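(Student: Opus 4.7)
The plan is to prove the contrapositive: for a suitably chosen $f(K,t)$, if $\sipco{G}>f(K,t)$ then $G$ contains $U_t$ as a $K$-fat minor. Fix a vertex $v$ and an isometric path $P=p_0 p_1\cdots p_\ell$ witnessing that $V(P)$ cannot be covered by $f(K,t)$ many $v$-rooted isometric paths, and write $d_i:=\distG{v}{p_i}$ (so $|d_{i+1}-d_i|\leq 1$ by triangle inequality). The first step is a combinatorial reformulation: along any $v$-rooted isometric path $Q$, distances from $v$ strictly progress by one per edge and, by isometry of $P$, must agree with $P$-index differences; hence $\{i:p_i\in Q\}$ is an index set on which $d$ is affine of slope $+1$ or $-1$. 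Conversely, for any such set $S$ (say of slope $+1$), triangle inequality forces $d$ to be affine on the entire interval $[\min S,\max S]$, and the concatenation of a geodesic $v\to p_{\min S}$ with the $P$-segment to $p_{\max S}$ is a $v$-rooted isometric path covering $\{p_i:\min S\le i\le \max S\}$. Thus the minimum number of $v$-rooted isometric paths covering $V(P)$ equals the minimum number of slope-$\pm 1$ affine intervals needed to cover $\{0,\ldots,\ell\}$, a purely combinatorial invariant of the walk $(d_i)$.

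Given the lower bound $>f(K,t)$ on this cover number, the walk $(d_i)$ must admit many direction-changing indices (peaks, valleys, or flat steps). Applying pigeonhole on the distance level and on a coarse $P$-spacing, I would extract a large sub-family of $m$ direction-changing indices $j_1<\cdots<j_m$ at a common level $d^*$ from $v$, with consecutive $P$-gaps at least $R=R(K)$, where $m$ is a Ramsey-type function of $K$ and $t$. Each $j_s$ carries a natural $v$-rooted isometric spoke $R_s$ ending at $p_{j_s}$ obtained via the affine-set construction. The technical crux is a splicing lemma: if two spokes $R_s,R_{s'}$ come within distance strictly less than $K$ at some common BFS layer above a hub radius $\rho(K)$, then a shortcut through the near-collision yields a $v$-rooted isometric path covering both $p_{j_s}$ and $p_{j_{s'}}$ together with the $P$-segment between them; this merges two affine intervals into a longer one and reduces the cover number, contradicting our lower bound. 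Applying Ramsey's theorem to the pairwise ``close at some layer vs.\ far at every layer'' relation then yields $t$ spokes pairwise at distance $\geq K$ outside a hub ball of radius $\rho(K)$ around $v$.

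I finally construct the $K$-fat $U_t$-minor by taking $B_w$ to be the closed $\rho(K)$-ball around $v$, $B_{u_s}=\{p_{j_{i_s}}\}$ for the $t$ surviving indices $i_1<\cdots<i_t$, with branch paths $w u_s$ the tails of the $R_{i_s}$ outside $B_w$, and branch paths $u_s u_{s+1}$ the $P$-sub-paths between consecutive $p_{j_{i_s}}$ and $p_{j_{i_{s+1}}}$. Verifying $K$-separation between non-incident pairs reduces to four kinds of checks: pairwise distance between the singletons $B_{u_s}$, and between non-adjacent $P$-sub-paths, both follow from isometry of $P$ together with the $P$-spacing $R$; pairwise distance between spokes follows from the Ramsey extraction; and distance between each spoke and each non-incident $P$-sub-path again follows from the splicing lemma, since a close approach would enable a longer affine interval. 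The main obstacle is to make the splicing lemma and the Ramsey extraction quantitatively compatible: one must ensure that every single close approach of two spokes can be converted into a strict merging of two affine intervals, and that enough structure survives the iterated Ramsey selection to build all $t$ separated spokes simultaneously. This quantitative bookkeeping is what will force $f(K,t)$ to be at least a tower-type function of $K$ and $t$.
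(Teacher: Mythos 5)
Your overall strategy is the same as the paper's (reduce the cover number to the slope-$\pm1$ affine-interval structure of the distance sequence along a bad isometric path, pick well-spaced special indices, draw root-to-path geodesic spokes, and assemble a fat $U_t$ with the hub near the root), but two of your key steps are broken as stated. The central problem is the splicing lemma. By your own first step, a single $v$-rooted isometric path can only cover a set of indices of $P$ on which $d_i$ is affine of slope $\pm1$; this is an intrinsic property of the sequence $(d_i)$, which no shortcut elsewhere in $G$ can alter. In particular, two of your selected indices at a common level $d^*$ (or, more generally, two indices with direction changes between them) can \emph{never} be covered by one $v$-rooted isometric path, so a near-collision of two spokes cannot ``merge two affine intervals'' or ``reduce the cover number''---the conclusion of your splicing lemma is impossible by construction, and the Ramsey extraction that relies on it collapses. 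The contradiction you want is metric, not combinatorial: since the $P$-segment between $p_{j_s}$ and $p_{j_{s'}}$ is isometric and needs $N$ affine intervals, one has $\dist{p_{j_s}}{p_{j_{s'}}}\geq |d_{j_s}-d_{j_{s'}}|+N-1$ (this is \Cref{prp:antichain-length}), whereas a near-collision of the spokes, combined with the fact that spokes are geodesics from $v$, produces an explicit $p_{j_s}$--$p_{j_{s'}}$ walk of length at most $|d_{j_s}-d_{j_{s'}}|+O(K)$ --- \emph{provided} the collision happens on bounded-length lower portions of the spokes. Making that proviso hold is exactly why the paper trims each spoke to a piece $Q'_i$ of length $\Theta(K)$ near $P$ and reserves the rest for the hub, and it is where the $5K$ and $12K$ constants come from (\Cref{clm:induced:2}, \Cref{clm:far-away-1}). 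Relatedly, your pigeonhole ``onto a common level $d^*$'' is unjustified (the number of levels is not bounded in terms of $K$ and $t$) and, fortunately, unnecessary.

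The second gap is in the minor model itself. In the $K$-fat minor definition only \emph{incident} branch-set/branch-path pairs may be close; two branch paths incident to the same branch set must still be at distance at least $K$. With $B_{u_s}=\{p_{j_{i_s}}\}$ a singleton, your spoke tail $wu_s$ and your $P$-segment $u_su_{s+1}$ both terminate at $p_{j_{i_s}}$, and consecutive $P$-segments share $p_{j_{i_s}}$, so the separation condition fails outright. You must fatten each $B_{u_s}$ into a connected set containing a long chunk of $P$ around $p_{j_{i_s}}$ together with the bottom of the spoke, and use as branch paths only the interior portions of the $P$-segments and the upper portions of the spokes; this is precisely the paper's $Z_i$, $D_i$, $Q'_i$, $T$ decomposition. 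A further (fixable) issue of the same kind: the hub ball of radius $\rho(K)$ around $v$ need not be disjoint from $P$, since the bad root can lie close to the bad path; one must either first pass to a sub-path of $P$ far from $v$, or (as the paper does) take the hub to be the union of the upper spoke portions and prove its separation from $P$ via the same antichain-distance lower bound rather than a radius argument.
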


Since (theta, prism, pyramid)-free graphs and outerstring graphs are $U_3$-asymptotic minor-free, the above theorem generalises two results of \cite{chakraborty2026isometric}. 
It is known that $K_3$-asymptotic minor-free graphs have bounded treelength~\cite{georgakopoulos2023graph,dourisboure2007tree}, and therefore have bounded strong isometric path complexity. The above theorem implies that $U_3$-asymptotic minor-free graphs have bounded strong isometric path complexity. On the other hand, it is known from \cite{chakraborty2026isometric} (and also implied from \Cref{thm:even}) that $K_4$- minor-free graphs have unbounded (strong) isometric path complexity. Hence, we have the following corollary.

\begin{corollary}
Let $H$ be a graph with at most four vertices. Then $H$-asymptotic minor-free graphs have bounded strong isometric path complexity if and only if $H\not=K_4$.
\end{corollary}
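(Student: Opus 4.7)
The plan is to derive both directions from Theorem~\ref{thm:induced} and the cited unboundedness of (strong) isometric path complexity on $K_4$-minor-free graphs. The key structural observation for the ``if'' direction is that every connected graph $H$ on at most four vertices with $H \neq K_4$ is a subgraph of $U_3$, which is itself isomorphic to $K_4^-$ (the path $v_1 v_2 v_3$ plus a universal vertex $u$ has exactly five edges). A short finite case check over the connected graphs $K_1$, $K_2$, $K_3$, $P_3$, $P_4$, $C_4$, $K_{1,3}$, the paw, and $K_4^-$ itself settles this; for instance, $C_4$ appears as the cycle $u v_1 v_2 v_3 u$ inside $U_3$, and the paw as the triangle $u v_2 v_3$ together with the pendant edge $u v_1$.

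Combined with the monotonicity of fat-minor containment under subgraph inclusion---that is, if $H \subseteq U_3$, then any $K$-fat $U_3$-model in $G$ restricts (by discarding the branch sets and branch paths corresponding to the excess of $U_3$ over $H$) to a $K$-fat $H$-model in $G$, since pairwise distances among a retained subfamily can only stay the same or increase---this yields the class inclusion: graphs excluding $H$ as a $K$-fat minor form a subclass of graphs excluding $U_3$ as a $K$-fat minor. Theorem~\ref{thm:induced} applied with $t = 3$ then bounds the strong isometric path complexity by $f(K, 3)$, completing the connected case. Disconnected $H$ on at most four vertices can be dispatched by separate small-case arguments: for instance, for $\overline{K_2}$, $\overline{K_3}$, and $K_1 + K_2$, exclusion already forces bounded diameter, which is known to bound the strong isometric path complexity.

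For the ``only if'' direction, note that every $K$-fat minor is in particular a minor, so the class of $K_4$-minor-free graphs is contained in the class of graphs avoiding $K_4$ as a $K$-fat minor for every $K \geq 1$. The cited fact from \cite{chakraborty2026isometric} that $K_4$-minor-free graphs have unbounded (strong) isometric path complexity therefore lifts directly to give unboundedness for $K_4$-asymptotic minor-free graphs.

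The main step requiring care is the subgraph-monotonicity of fat minors, which, though intuitive, relies on the observation that a subfamily of a family of sets and paths at pairwise distance at least $K$ is itself at pairwise distance at least $K$, and that restricting a connected branch set to a subset is still forced to remain connected by the choice of which branch sets are retained. Aside from this, the only delicate part is handling the disconnected cases of $H$, which one should dispatch via case-specific bounded-diameter arguments.
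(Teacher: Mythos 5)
Your overall route is the same as the paper's: reduce to Theorem~\ref{thm:induced} with $t=3$ via the observations that $U_3\cong K_4^-$ and that fat-minor containment is monotone under taking subgraphs of the pattern, and obtain the converse from the fact that the class of $K_4$-minor-free graphs is $K_4$-asymptotic-minor-free (a $K$-fat minor model with $K\geq 1$ yields an ordinary minor model) and has unbounded strong isometric path complexity. All of that is correct and is exactly what the paper's brief justification relies on.

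The one genuine weak point is your treatment of disconnected $H$. First, it is unnecessary: every graph on at most four vertices other than $K_4$ --- connected or not --- is missing at least one edge of $K_4$ and is therefore a subgraph of $K_4^-=U_3$, so the same monotonicity argument (discard the branch sets and branch paths corresponding to the unused vertices and edges of $U_3$; the retained subfamily still satisfies the connectivity, intersection, and pairwise-distance requirements) disposes of all cases at once. Second, as sketched, your fallback would fail for some disconnected patterns: excluding $K_1+K_3$ (or $K_1+P_3$) as an asymptotic minor does \emph{not} force bounded diameter --- the class of all paths excludes $K_1+K_3$ even as a $1$-fat minor, since a $1$-fat model of $K_3$ would give a $K_3$ minor, yet paths have unbounded diameter. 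So not all remaining disconnected cases can be dispatched by bounded-diameter arguments; the right fix is simply to drop the connectedness restriction from your case check. A minor wording issue: in the monotonicity step you do not ``restrict a branch set to a subset'' --- you keep each retained branch set and branch path whole and delete the others entirely, which is why connectedness of what remains is automatic.
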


Using a result from~\cite{chakraborty2026isometric}, \Cref{thm:induced} also implies the following corollary.

\begin{corollary}
\textsc{Isometric Path Cover} admits a constant-factor approximation algorithm on $U_t$-asymptotic minor-free graphs for all $t\geq 1$.
\end{corollary}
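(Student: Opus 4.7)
The plan is to derive this corollary by directly chaining \Cref{thm:induced} with the approximation-algorithmic consequence of bounded isometric path complexity established in~\cite{chakraborty2026isometric}. Concretely, the class of $U_t$-asymptotic minor-free graphs comes, by definition, with a fixed constant $K=K(t)$ such that no graph in the class contains $U_t$ as a $K$-fat minor. Applying \Cref{thm:induced} yields $\sipco{G}\le f(K,t)$ for every $G$ in the class, and since $\ipco{G}\le\sipco{G}$ always holds, the isometric path complexity is bounded by the constant $c:=f(K,t)$ as well. The paper~\cite{chakraborty2026isometric} already proves that \textsc{Isometric Path Cover} admits a polynomial-time $c$-approximation on graphs of isometric path complexity at most $c$, so plugging in the bound $c=f(K,t)$ immediately gives a constant-factor approximation on the whole class.

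The only subtlety worth a sentence is how the algorithm ``knows'' the constant $c$ to use. This is not a real obstacle: either one hard-codes $c=f(K,t)$ (legitimate, since $K$ and $t$ are constants of the class), or one computes $\sipco{G}$ (and hence a valid $c$) directly from the input, using the fact recalled in the introduction that the strong isometric path complexity can be evaluated optimally in polynomial time~\cite{chakraborty2026isometric}. In either implementation, the approximation ratio depends only on $K$ and $t$, i.e., only on the graph class, as required.

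Overall, there is no technical obstacle in the corollary itself: all of the difficulty has already been absorbed into \Cref{thm:induced} (where the structural work for fat minors takes place) and into the approximation result of~\cite{chakraborty2026isometric}. The corollary is obtained simply by composing the two.
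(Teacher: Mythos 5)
Your proposal matches the paper's intended derivation: the corollary is stated there as an immediate consequence of \Cref{thm:induced} combined with the $c$-approximation result of~\cite{chakraborty2026isometric} for graphs of isometric path complexity at most $c$, exactly as you chain them (using $\ipco{G}\le\sipco{G}\le f(K,t)$). Your extra remark about how the algorithm obtains the constant $c$ is a harmless and correct addition, so the proof is correct and follows essentially the same route as the paper.
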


We note that $K_{2,t}$-asymptotic minor-free graphs are also $U_t$-asymptotic minor-free graphs. See \Cref{prp:K2-t} for a proof. Hence, \Cref{thm:induced} implies the following.

\begin{corollary}
	There is a function $f\colon \mathbb{N}\times \mathbb{N}\rightarrow \mathbb{N}$ such that the strong isometric path complexity of graphs that do not contain $K_{2,t}$ as a $K$-fat minor is at most $f(K,t)$.
\end{corollary}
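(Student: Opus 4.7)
The plan is to derive this corollary immediately from \Cref{thm:induced} combined with the class inclusion stated just above: every $K_{2,t}$-asymptotic minor-free graph is also $U_t$-asymptotic minor-free (which is \Cref{prp:K2-t}). Concretely, if $G$ contains no $K_{2,t}$ as a $K$-fat minor, then by \Cref{prp:K2-t} there is some $K' = K'(K,t)$ such that $G$ contains no $U_t$ as a $K'$-fat minor; applying \Cref{thm:induced} then bounds $\sipco{G}$ by a function of $K'$ and $t$, yielding the desired $f(K,t)$.

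Thus the technical content of the corollary lies in \Cref{prp:K2-t}, which I would prove as follows. Given a $K'$-fat $U_t$ minor in $G$, with branch sets $B_u, B_1, \ldots, B_t$ and branch paths $P_{u,i}$ (for the edges $uv_i$) and $Q_{i,i+1}$ (for the path edges), first replace each $P_{u,i}$ by a shortest $B_u$-to-$B_i$ path in $G$; since $B_u$ and $B_i$ are non-incident branch sets, every such path has length at least $K'$. Then split each $P_{u,i}$ into three consecutive subpaths of roughly equal length. Define the two hub branch sets of a candidate $K_{2,t}$ minor as $A_1 = B_u \cup \bigcup_i(\text{initial subpath of } P_{u,i})$ and $A_2 = R \cup \bigcup_i(\text{final subpath of } P_{u,i})$, where $R = B_1 \cup Q_{1,2} \cup \cdots \cup B_t$ is the spine, and take the middle subpath of each $P_{u,i}$ as the common-neighbor branch set $C_i$, with the short junction subpaths of $P_{u,i}$ playing the role of the required branch paths $R_{1,i}$ and $R_{2,i}$.

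The main obstacle is verifying all fatness inequalities (pairwise distance $\geq K$) for this construction. The distance between $C_i$ and $C_j$ is inherited from the $\geq K'$ separation between $P_{u,i}$ and $P_{u,j}$ in the original $U_t$ minor; the distance from each $C_i$ to $A_1$ or $A_2$ is controlled by the length of the middle subpath, using the shortest-path structure of $P_{u,i}$ to rule out shortcuts; and the critical estimate $d_G(A_1, A_2) \geq K$ follows from $d_G(B_u, B_j) \geq K'$ for all $j$ together with the geodesic lengths along the $P_{u,i}$'s. A linear relationship $K' = \Theta(K)$ should suffice, completing \Cref{prp:K2-t} and hence the corollary.
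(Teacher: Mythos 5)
Your top-level reduction is exactly the paper's: the corollary is \Cref{thm:induced} applied after \Cref{prp:K2-t}, and the quantitative bookkeeping ($K'=\Theta(K)$, then invoke \Cref{lem:house-ipac}) is right. Your construction for \Cref{prp:K2-t} also has the same shape as the paper's: cut each branch path from the universal-vertex branch set $B_u$ into pieces, let the middle pieces be the degree-$2$ branch sets of $K_{2,t}$, and let $B_u$ and the spine $B_1\cup Q_{1,2}\cup\dots\cup B_t$ serve as the two hubs. However, your first step --- replacing each $P_{u,i}$ by a shortest $B_u$-to-$B_i$ path in $G$ --- opens a genuine gap. A globally shortest $B_u$-to-$B_i$ path need not stay anywhere near the original branch path, so none of the pairwise separations of the original $U_t$ model transfer to the rerouted paths. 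In particular, the claim that $\distG{C_i}{C_j}\geq K$ is ``inherited from the $\geq K'$ separation between $P_{u,i}$ and $P_{u,j}$'' is false after the replacement: if every shortest path out of $B_u$ towards $B_1$ and towards $B_2$ must first traverse a common corridor of length comparable to $\distG{B_u}{B_1}$ before branching, the two rerouted geodesics can share a long prefix, their middle thirds can intersect, and the initial third of one path can be adjacent to the middle third of the other, so that $\distG{C_1}{C_2}=0$ and $\distG{A_1}{C_2}\leq 1$ --- all while the \emph{original} branch paths $P_{u,1},P_{u,2}$ (which may take a much longer route) remain $K'$-far apart, so the $K'$-fat $U_t$ model is perfectly valid. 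The only distances your geodesic substitution actually controls are the intra-path ones.

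The repair is what the paper does: do not reroute. Take an induced subpath $Q_{uv_i}$ of the \emph{original} $P_{uv_i}$ meeting $B_u\cup B_{v_i}$ only at its ends; then every ``cross-path'' separation is inherited verbatim from the $K'$-fat $U_t$ model, and only the intra-path distances (middle piece versus the two end pieces of the same $Q_{uv_i}$) need an argument, which uses $\distG{B_u}{B_{v_i}}\geq K'$ to locate a middle segment that is far in $G$ from both ends. Two further points to check once this is fixed: the two branch paths $R_{1,i}$ and $R_{2,i}$ incident to the same $C_i$ are a non-exempt pair in the fat-minor definition, so they too must be $\geq K$ apart, forcing the middle piece to have length $\Omega(K)$; and the paper avoids your extra ``junction subpaths'' altogether by taking the entire initial and final pieces $P^1_v,P^2_v$ as the branch paths of the $K_{2,t}$ model, so that the hubs are simply $B'_0=B_u$ and $B'_1=$ the spine, with nothing absorbed into them.
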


Note that the class of $K_{2,3}$-asymptotic minor-free graphs and its subclasses (e.g.\ \emph{$K_{2,3}$-induced minor-free} graphs, \emph{universally-signable} graphs, \emph{outerstring} graphs) have been studied in the literature~\cite{chepoi2012constant,bose2022computing,an2024sparse,conforti1997, milanivc2024bisimplicial,dallard2024detecting}. 
Furthermore, $K_{2,t}$-asymptotic minor-free graphs are strictly included in the class of $U_t$-asymptotic minor-free graphs. For an integer $k\geq 2$, let $F_k$ denote the graph obtained by taking $k$ copies of paths of length $k$ and identifying the end-vertices of the paths. See \Cref{fig:U_t}(b). Let $\mathcal{ F}=\displaystyle\bigcup\limits_{k\geq 2 } F_k$. Clearly, $\cal F$ is $U_t$-asymptotic minor-free but contains $K_{2,t}$ as an asymptotic minor for all $t\geq 1$. Observe that all graphs in $\cal F$ are \emph{monoholed} i.e. the lengths of all induced cycles in the graph are the same. 

\medskip\noindent\textbf{Monoholed graphs and even-hole free graphs.} A \emph{hole} is an induced cycle of length at least~4. The study of graphs whose hole lengths are restricted has been pursued in Structural Graph Theory~\cite{cook2024graphs,horsfield2022structural,woodroofe2009vertex}. A graph $G$ is \emph{$\ell$-holed} if all holes of $G$ are of length $\ell$. A graph $G$ is \emph{monoholed} if it is $\ell$-holed for some integer $\ell$. For any odd integer $\ell$, $\ell$-holed graphs are also \emph{even-hole} free graphs (i.e. graphs with no holes of even length) and, for even integers $\ell$, it is known that $\ell$-holed graphs are \emph{perfect}~\cite{horsfield2022structural}, that is, do not contain any odd holes or complements of an odd hole as induced subgraphs. Studies on even-hole free graphs~\cite{vuvskovic2010even} and perfect graphs~\cite{chudnovsky2006strong} have yielded many powerful decomposition theorems and algorithmic techniques {for fundamental problems such as \textsc{Coloring}, \textsc{Independent Set}, and \textsc{Clique}.}

A complete structural description of $\ell$-holed graphs with $\ell\geq 7$ {was obtained} by Cook et al.~\cite{cook2024graphs}. Monoholed graphs can be recognised in polynomial time~\cite{cook2024graphs,horsfield2022structural}. The class of $5$-holed graphs was studied in the context of algebraic combinatorics and commutative algebra~\cite{woodroofe2009vertex}. However, a complete description of $5$-holed graphs remains unknown~\cite{cook2024graphs}. Several subclasses of $5$-holed graphs have been studied in the literature~\cite{foley2020intersection,penev2020clique}. We show (without using the decomposition theorem from~\cite{cook2024graphs}) that monoholed graphs have bounded strong isometric path complexity.

\begin{theorem}\label{thm:mono}
 The class of monoholed graphs is $U_4$-asymptotic minor-free. Hence, {monoholed graphs have bounded strong isometric path complexity.}

\end{theorem}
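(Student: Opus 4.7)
The second sentence of the statement follows immediately from the first combined with \Cref{thm:induced}, so I concentrate on showing that monoholed graphs are $U_4$-asymptotic minor-free. The plan is to argue by contradiction: suppose some $\ell$-holed graph $G$ contains $U_4$ as a $K$-fat minor for some large enough absolute constant $K$, and produce a structural contradiction. (The degenerate chordal case $\ell=3$ is handled first by noting that any $K$-fat triangle of $U_4$ with $K\geq 2$ already yields an induced cycle of length $\geq 3K\geq 4$, which is forbidden in chordal graphs.) Write $B_u,B_{v_1},\ldots,B_{v_4}$ for the branch sets and $P_{uv_i},P_{v_iv_{i+1}}$ for the seven branch paths of the model. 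By replacing each branch set with a Steiner tree on its portal vertices and each branch path with a shortest $G$-path between its portals, I can assume every branch path is induced; the $K$-fat condition (for $K\geq 2$) then forbids chords between non-incident branch paths and branch sets, so any cycle built by gluing branch paths along shortest portal-to-portal arcs inside branch sets is automatically an induced cycle of $G$.

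Using this setup I would extract five induced cycles of $G$: three ``big triangles'' $C_{12},C_{23},C_{34}$, one for each triangle $uv_iv_{i+1}$ of $U_4$, and two ``big $4$-cycles'' $D_{123},D_{234}$, one for each length-three subpath $v_iv_{i+1}v_{i+2}$ together with the spokes $P_{uv_i},P_{uv_{i+2}}$. Each has length at least $3K\geq 4$, hence by monoholedness each has length exactly $\ell$. Writing $\alpha_i=|P_{uv_i}|$ and $\beta_{i,i+1}=|P_{v_iv_{i+1}}|$, the elementary accounting identity
\[
|C_{12}|+|C_{23}|-|D_{123}| \;=\; 2\alpha_2 + s_u^{(1,2,3)} + s_{v_2}^{(1,u,3)},
\]
where $s_u^{(1,2,3)}, s_{v_2}^{(1,u,3)}\geq 0$ are the triangle-inequality defects inside $B_u$ and $B_{v_2}$, together with $|C_{12}|=|C_{23}|=|D_{123}|=\ell$ yields $\alpha_2\leq \ell/2$. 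The symmetric identity around $v_3$ yields $\alpha_3\leq \ell/2$.

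In the topological-minor case (branch sets are single vertices, so the defects $s_u,s_{v_2},s_{v_3}$ are automatically zero) equality holds on both lines, so $\alpha_2=\alpha_3=\ell/2$. But then $|C_{23}|\geq \alpha_2+\alpha_3+\beta_{23}\geq \ell/2+\ell/2+K=\ell+K>\ell$, contradicting $|C_{23}|=\ell$. Applying \Cref{thm:induced} to the $U_4$-asymptotic-minor-freeness thus established delivers bounded strong isometric path complexity.

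The main obstacle is to close the contradiction in the \emph{general} fat-minor case, where the defect terms $s_u,s_{v_2},s_{v_3}$ are a priori strictly positive. Rather than bounding each defect independently, my plan is to combine the five induced-cycle equations (from $C_{12},C_{23},C_{34},D_{123},D_{234}$) together with the explicit expression for $|C_{23}|$ into a single identity that expresses $\beta_{23}$ as a combination of internal tree-metric distances between portals inside $B_u,B_{v_2},B_{v_3}$. One then shows, using the tree (Steiner) structure of the normalised branch sets, that every surviving non-negative combination of internal distances is dominated by the $K$ lower bound on $\beta_{23}$ arising from fatness, so that the identity is only consistent with $\beta_{23}\leq c$ for some absolute constant $c$ (independent of $K$), which contradicts $\beta_{23}\geq K$ for large enough $K$. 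If a particular arrangement of portals resists this reduction, I would instead exploit the obstruction to exhibit an additional induced cycle of $G$ of length distinct from $\ell$, again contradicting monoholedness. Carrying out this case analysis cleanly is the main technical work of the proof.
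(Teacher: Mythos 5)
Your reduction of the second sentence to \Cref{thm:induced} and your treatment of the degenerate case (branch sets of size one, where the two identities force $\alpha_2=\alpha_3=\ell/2$ and $|C_{23}|\geq \ell+K$ gives a contradiction) are fine, but the proposal has a genuine gap exactly where the theorem's difficulty lies: the general fat-minor case, which you explicitly leave as ``the main technical work''. Worse, the mechanism you propose for it cannot succeed as described. First, a smaller issue: fatness does not restrict adjacencies between a branch path and its two \emph{incident} branch sets, so your normalised cycles are not ``automatically induced''; one has to take shortest witnesses and trim near the junctions (this is what the paper's proof does to build its single induced cycle $D$), and all your length identities then only hold up to additive constants. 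That is repairable. The fatal issue is the hub: for the three-portal branch sets $B_{v_2},B_{v_3}$ every three-point metric is a tree metric, so the defect bookkeeping behaves as you hope, but the four portals of $B_u$ carry an arbitrary graph metric, not a tree metric, once you insist (as you must, for the cycles to be holes) that the arcs through $B_u$ are induced paths of $G$ rather than Steiner-tree paths. One can check that the full system of hole-length equations for all six lifted cycles of $U_4$, together with all triangle inequalities and the constraints that every branch path has length at least $K$, admits solutions in which the hub distances violate the four-point condition (e.g.\ $d_u(p_1,p_2)=d_u(p_3,p_4)=165$, $d_u(p_1,p_3)=d_u(p_2,p_4)=125$, $d_u(p_2,p_3)=140$, $d_u(p_1,p_4)=110$, with $\ell=200$, spoke lengths $10$, $\beta_{12}=\beta_{34}=10$, $\beta_{23}=30$, and suitable small portal distances in $B_{v_2},B_{v_3}$), and in such solutions $\beta_{23}$ can be made arbitrarily large. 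So no nonnegative combination of your five (or six) identities with branch-set metric inequalities can bound $\beta_{23}$; the ``tree (Steiner) structure'' you invoke is precisely what is not available for $B_u$ compatibly with inducedness. Your fallback --- ``exhibit an additional induced cycle of length distinct from $\ell$'' --- is exactly where the real proof has to happen, and you give no argument for it.

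For comparison, the paper takes a different route that supplies the missing structural input. It works with Truemper $3$-path configurations (thetas, prisms, pyramids), noting that in a monoholed graph all of them are balanced and of one common length. It first constructs a single long induced cycle $D$ following the $5$-cycle $a_0a_1a_2a_3a_4$ of $U_4$ (via shortest witnesses and trimming, with $K=10$ sufficing), and then uses the two remaining spokes, the branch paths for $a_0a_2$ and $a_0a_3$, as shortest connections from $D$ back to itself. The first spoke together with $D$ creates a balanced $3$-path configuration; the second spoke is then shown, by adjacency/inducedness case analysis (not by length accounting), to create a $3$-path configuration that is unbalanced or of a different length, contradicting monoholedness. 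Some such non-numerical argument is unavoidable in view of the consistent assignment above, so as it stands your proposal does not prove the theorem.
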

  
The above theorem implies that \IPC admits a constant-factor approximation algorithm on monoholed graphs. Some other algorithmic results on monoholed graphs are known. \textsc{Maximum Independent Set} can be solved in polynomial time on monoholed graphs~\cite{cook2024graphs}. \textsc{Clique} can be solved in polynomial time for $\ell$-holed graphs with $\ell\geq 5$, as there are only polynomially many maximal cliques in such graphs. The computational complexity of \textsc{Coloring} is open on these graphs. Already, optimally coloring the special case of rings (a special type of monoholed graphs that is important in their study) is a difficult (but solved) problem~\cite{maffray2021coloring}.

Next, we consider two superclasses of monoholed graphs: perfect graphs and even-hole free graphs. Bipartite graphs {(which form a subset of both these classes)} have unbounded isometric path complexity. Indeed, it can be easily checked that the isometric path complexity of an $(n\times n)$-grid is $\Omega(n)$. (Indeed, for each vertex $v$ of a $(n\times n)$-grid there exists an isometric path $P_v$ containing $\Omega(n)$ many vertices which are equidistant from $v$ and no two such vertices can be covered by one $v$-rooted isometric path.) Hence, perfect graphs have unbounded isometric path complexity. We show that the isometric path complexity of even-hole free graphs is also unbounded.

\begin{theorem}\label{thm:even}
{The class of even-hole free graphs with {maximum} degree at most~$3$ has unbounded isometric path complexity.}
\end{theorem}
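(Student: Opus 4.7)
My plan is to exhibit, for every integer $k\ge 1$, an explicit even-hole-free graph $G_k$ of maximum degree at most~3 satisfying $\ipco{G_k}\ge k$. For the lower bound, I would rely on the following elementary observation: any $v$-rooted isometric path contains at most one vertex at each given distance from $v$. Therefore, if I can produce, for every choice of root $v\in V(G_k)$, an isometric path $P_v$ of $G_k$ containing $k$ vertices pairwise equidistant from $v$, then $\ipco{G_k}\ge k$ follows immediately, since no $v$-rooted isometric path can cover more than one of them. This is in fact the same argument used earlier in the paper to show that the $(n\times n)$-grid has isometric path complexity $\Omega(n)$.

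The main difficulty is to build $G_k$ so that every root $v$ admits such an ``equidistance plateau'' on some isometric path, while simultaneously forbidding even holes and respecting the max-degree-3 constraint. The obvious model graph (the square grid) has the equidistance property but is bipartite and has maximum degree~4; moreover, subdividing its edges does not help, because subdivisions preserve cycle parity, so the resulting graph remains bipartite. The construction must therefore replace the 4-cycles of the grid with odd-cycle cells (triangles, pentagons, or subdivided odd cycles) while preserving a grid-like metric. A natural attempt is a ``sausage'' of odd pyramids chained along a backbone: apices $v_1,\ldots,v_k$ linked to common triangle bases by three internally disjoint paths of equal odd length, with the potential degree overflow at the triangle vertices resolved by a small subdivision gadget at each junction. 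The width of three parallel paths together with the triangle bases provides, as in a grid, long isometric paths with $\Theta(k)$ pairwise equidistant vertices from any root placed on the backbone, and analogous plateaus from roots placed inside a pyramid by using symmetry of the three parallel paths.

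The main obstacle I anticipate is verifying even-hole-freeness of the resulting graph. Many plausible constructions inadvertently introduce long induced even cycles that ``wrap around'' two or more gadgets (an issue already visible in the simplest ``ladder with triangular rungs'', where consecutive rungs create an induced 4-cycle or 6-cycle). To rule these out, the lengths of the pyramid paths and the spacing of consecutive gadgets must be tuned so that every induced cycle is either the triangle base of a single pyramid or a hole of a single pyramid of the form $\ell_i+\ell_j+1$ with $\ell_i,\ell_j$ of the same parity; the parity choice then makes all such holes odd, as noted in the earlier discussion of pyramid holes. Proving this will require a case analysis showing that any induced cycle meeting two different gadgets must intersect a triangle base, use at least two disjoint paths from the triangle to some apex, and therefore admit a chord, contradicting induced-ness. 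Once even-hole-freeness and the max-degree bound are established, the equidistance plateau is read off directly from the three-parallel-path structure of the pyramids, completing the lower bound.
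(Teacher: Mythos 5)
Your overall strategy (an explicit gadget construction plus a lower bound via many vertices of an isometric path that no single rooted path can cover) is the same general route the paper takes, but as written the proposal has genuine gaps. The most serious one is the central claim that in a ``sausage'' of constant-width gadgets every root $v$ sees an isometric path with $\Theta(k)$ \emph{pairwise equidistant} vertices. A chain-like structure of width~3 does not deliver this: from a root near one end of the chain, vertices lying in distinct gadgets are at genuinely different distances from $v$, so any equidistant set on a single isometric path has size bounded by the (constant) width; the grid gets its $\Omega(n)$ plateau precisely from its two-dimensional extent, which your construction deliberately gives up. The paper's construction faces the same issue and resolves it differently: equidistance is only used \emph{within} each gadget (the two base vertices $b_i,c_i$ of a triangle are equidistant from the root), while across gadgets one only gets pairwise \emph{incomparability} in the BFS digraph $\overrightarrow{G_r}$, which suffices by \Cref{lem:ipac-ipco}; making the long path through all the $b_ic_i$ edges isometric, and killing directed paths between different gadgets, requires the careful choice of connector lengths ($k+1$ for the bottom connectors versus $k$ for the backbone). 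Your proposal contains no analogue of this bookkeeping, and with equidistance as stated the lower bound does not go through. Relatedly, you wave at ``analogous plateaus from roots placed inside a pyramid by symmetry,'' but the quantification is over \emph{all} roots, and verifying a size-$k$ antichain for every root of the gadget graph is exactly the hard part; the paper sidesteps it entirely by taking two disjoint copies joined by a single edge, so that for any root in one copy, covering a hard path in the other copy reduces to the already-analysed fixed root at the bridge endpoint. Your plan has no such reduction.

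Two further, smaller issues. First, the degree repair ``by a small subdivision gadget at each junction'' does not work as stated: subdividing edges never lowers the degree of an existing vertex, and attaching connectors to a subdivision vertex of a triangle edge destroys the triangle and creates a $4$- or larger even cycle unless parities are re-tuned, so the construction is not actually specified at the point where the max-degree-$3$ constraint bites. (The paper avoids this by using a much simpler gadget: a triangle with a pendant path, so every vertex has degree at most~$3$ from the start.) Second, even-hole-freeness is acknowledged but entirely deferred; in the paper this is a short parity computation because every induced cycle of length at least~$4$ is forced to traverse whole backbone and connector segments, giving length $(t-i+1)(2k+2)-1$, which is odd. Without a concrete choice of path lengths and junction structure, neither the parity argument nor the degree bound nor the isometry of the long path can be checked, so the proposal is a plausible plan rather than a proof.
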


\medskip\noindent\textbf{Graph operations.} In this paper, we study the behaviour of (strong) isometric path complexity under popular graph operations. We prove that two popular quasi-isometric graph operations: \emph{graph powers} and \emph{line graphs} preserve the property of having bounded strong isometric path complexity. For an integer $r\geq 1$ and a graph $G$, the $r^{th}$ power of $G$, denoted as $G^r$, is the graph with vertex set $V(G)$ where two vertices are adjacent in $G^r$ if they are at distance at most $r$ in $G$. The \emph{line graph} of $G$, denoted as $\linegraph{G}$, has vertex set $E(G)$ and two vertices are adjacent in $\linegraph{G}$ if the corresponding edges share an end-vertex in $G$. Clearly, for a fixed integer $r$, any graph $G$ and its $r^{th}$ power $\powergraph{G}{r}$ are quasi-isometric. Similarly, any graph $G$ and its line graph $\linegraph{G}$ are also quasi-isometric. Line graphs and fixed powers of graphs are important in various areas of graph theory including metric graph theory~\cite{bandelt2008metric}.
We prove the following.

\begin{theorem}\label{thm:line-power}
    Let $G$ be a graph with isometric path complexity (resp. strong isometric path complexity) at most $k$. Then the following holds:
    \begin{enumerate}[label=(\alph*)]
        \item\label{it:pow} for every integer $r\geq 1$, the isometric path complexity (resp. strong isometric path complexity) {of $G^r$} is at most $k(4r^2-2r)$;
         
        \item\label{it:line} the isometric path complexity (resp. strong isometric path complexity) of $\linegraph{G}$ is at most $3k+1$. Moreover, for every $t\geq 1$ there exists a graph $H_t$ with $\ipco{H_t} = t$ and $\ipco{\linegraph{H_t}}= 2t-1$. 
    \end{enumerate}
\end{theorem}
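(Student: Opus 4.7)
The plan for both parts is a three-step reduction: (i) transfer an isometric path of the enlarged graph ($G^r$ or $\linegraph{G}$) into a bounded number of isometric paths of $G$; (ii) cover each via the (strong) isometric path complexity hypothesis on $G$; (iii) lift each resulting $v$-rooted isometric $G$-path back to $v$-rooted isometric paths of the enlarged graph.

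For part~\ref{it:pow}, let $P = p_0 p_1 \cdots p_\ell$ be an isometric path of $G^r$. The metric core of the argument is that if each $p_i$ ($i \geq 1$) is assigned its offset $s_i := \distG{p_0}{p_i} - (i-1)r \in \{1, \ldots, r\}$, then for any $i < j$ with $s_i = s_j$ we have $\distG{p_i}{p_j} = (j-i)r$---the upper bound by walking along $P$ in $G^r$ (each $G^r$-edge corresponds to $G$-distance at most $r$), the lower bound from the triangle inequality with $p_0$. Hence the concatenation of shortest $G$-paths between consecutive same-offset vertices is a walk of length equal to the $G$-distance between its endpoints, and therefore (in an unweighted graph) an isometric $G$-path containing all same-offset $p_i$'s. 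Taking one such $G$-path per offset $s \in \{1,\ldots,r\}$, together with a symmetric analysis anchored at $p_\ell$, covers $V(P)$ by at most $4r - 2$ isometric $G$-paths. Step~(ii) converts each of these into $k$ $v$-rooted isometric $G$-paths via the hypothesis on $G$. Step~(iii) lifts each $v$-rooted isometric $G$-path $v = q_0, q_1, \ldots, q_L$ to $r$ $v$-rooted isometric $G^r$-paths given by the constant-offset subsequences $v, q_s, q_{r+s}, q_{2r+s}, \ldots$ for $s = 1, \ldots, r$, which a direct distance calculation shows are $v$-rooted isometric in $G^r$ and together cover $V(Q)$. Multiplying gives $(4r-2)\cdot k \cdot r = k(4r^2 - 2r)$. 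The main obstacle is the metric lemma in Step~(i).

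For part~\ref{it:line}, given $[e] = [uv] \in V(\linegraph{G})$ and an isometric $\linegraph{G}$-path $P = [e_0], [e_1], \ldots, [e_\ell]$ with $\ell \geq 2$, define $u_i$ as the common endpoint of $e_{i-1}$ and $e_i$; isometry of $P$ forces $u_i \neq u_{i+1}$ (otherwise $e_{i-1}$ and $e_{i+1}$ would share $u_i$, contradicting $d_{L(G)}([e_{i-1}], [e_{i+1}]) = 2$), so $e_i = u_i u_{i+1}$. Combining the formula $d_{L(G)}([xy], [x'y']) = 1 + \min\set{\distG{a}{b} : a \in \{x, y\}, b \in \{x', y'\}}$ (for distinct edges) with $d_{L(G)}([e_{i-1}], [e_{j-1}]) = j - i$ forces $\distG{u_i}{u_{j-1}} = j - i - 1$ for all $i < j$, so $u_1 u_2 \cdots u_\ell$ is isometric in $G$. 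Apply the strong isometric path complexity of $G$ rooted at $u$ to cover $\{u_1, \ldots, u_\ell\}$ by $k$ $u$-rooted isometric $G$-paths $Q_1, \ldots, Q_k$. For each $Q_j = u, q_1^j, q_2^j, \ldots$, the associated edge sequence $[uq_1^j], [q_1^j q_2^j], \ldots$ forms an isometric $\linegraph{G}$-path; a short case analysis based on whether $v$ is ``behind,'' ``beside,'' or ``ahead of'' $u$ along $Q_j$ (comparing $\distG{v}{q_i^j}$ to $\distG{u}{q_i^j}$) yields at most $3$ $[e]$-rooted isometric $\linegraph{G}$-paths per $Q_j$, for a total of $3k$. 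Adding one extra path to cover the outer endpoints $u_0, u_{\ell+1}$ of the extreme edges $e_0, e_\ell$ (which need not appear on any $Q_j$) gives the bound $3k + 1$.

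For the tightness example, $H_t$ would be constructed as a specific graph with $\ipco{H_t} = t$ (likely resembling the known lower-bound instances) in which each edge contributes two distinct rooted-path directions to $\linegraph{H_t}$, roughly doubling the rooted-path count and yielding $\ipco{\linegraph{H_t}} = 2t - 1$; the matching upper bound would follow from part~\ref{it:line} or a tailored argument. The principal obstacles are the metric lemma in part~\ref{it:pow} and, for part~\ref{it:line}, the case analysis per $Q_j$ together with the design of a matching tight example $H_t$.
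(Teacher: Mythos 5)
Part~(a): your argument is correct and reaches the key intermediate bound by a genuinely different (and cleaner) route than the paper. The paper first shows an isometric path of $G^r$ has at most one ``black'' edge (an edge of $G$), splits at it, and covers each ``red'' piece by $2r-1$ isometric paths of $G$ via a somewhat delicate argument about a maximal collection of edge-disjoint non-isometric subpaths of a concatenated walk. Your offset invariant $s_i=\distG{p_0}{p_i}-(i-1)r\in\{1,\dots,r\}$ (valid since $d_{G^r}(x,y)=\lceil \distG{x}{y}/r\rceil$ and $P$ is isometric in $G^r$) gives $\distG{p_i}{p_j}=(j-i)r$ for same-offset vertices, so concatenating shortest $G$-paths through an offset class produces a walk of length exactly the distance between its ends, hence an isometric path; this covers $V(P)$ by $r+1$ isometric $G$-paths in one stroke (well within the $4r-2$ you quote), with no black/red case distinction needed. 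Your lifting step is identical to the paper's \Cref{lem:pow-2}, and the product bound $k(4r^2-2r)$ follows.

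Part~(b): there is a genuine gap in the upper bound. You cover the \emph{vertices} $u_1,\dots,u_\ell$ of the induced $G$-path by $k$ $u$-rooted isometric $G$-paths $Q_1,\dots,Q_k$ and then cover each $\linegraph{Q_j}$ by rooted paths of $\linegraph{G}$. But the vertices of the original $\linegraph{G}$-path are the \emph{edges} $e_i=u_iu_{i+1}$, and $[e_i]$ is covered by $\linegraph{Q_j}$ only if $u_iu_{i+1}$ is an \emph{edge} of $Q_j$; having $u_i$ and $u_{i+1}$ each lie on some (possibly different) $Q_j$ does not suffice. Up to $k-1$ ``gap'' edges of $u_1\cdots u_\ell$ can lie on no $Q_j$, and each such edge is a vertex of $\linegraph{G}$ that still needs its own rooted path --- this is precisely what the paper's \Cref{lem:edge-cover} quantifies and what its final count budgets for. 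Moreover, $[e_0]$ and $[e_\ell]$ cannot in general be covered by a single common $[e]$-rooted isometric path, so you need two extra paths there, not one. With your $3$ paths per $Q_j$ the corrected total is $3k+(k-1)+2=4k+1$; to reach $3k+1$ one needs the sharper fact (the paper's \Cref{lem:line-decompose}) that $\linegraph{Q_j}$ splits into only \emph{two} rooted isometric paths of $\linegraph{G}$ when the root edge is incident to the root vertex of $Q_j$. Finally, the tightness family $H_t$ with $\ipco{H_t}=t$ and $\ipco{\linegraph{H_t}}=2t-1$ is not constructed: the paper builds an explicit graph (two copies of a spider with $t$ legs joined by a path along their tips, glued at a pendant vertex) and verifies both equalities, and this half of the statement remains unproven in your proposal.
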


We do not known if the dependency on $r$ is needed in Theorem~\ref{thm:line-power}(a). Note that $r$-powers of chordal graphs (for even $r$) are also chordal, and chordal graphs have strong isometric path complexity at most~4~\cite{c22isom}, so in this case, no dependency on $r$ is needed.

If two graphs $G$ and $H$ each contain cliques $C_G$ and $C_H$, respectively, of equal size, the \emph{clique-sum} of $G$ and $H$ is obtained from the disjoint union of $G$ and $H$ by identifying pairs of vertices of $C_G$ and $C_H$ to form a single shared clique. Note that for different $C_G$ and $C_H$, the resulting graph can be different. Let $G\oplus H$ denote a graph that can be obtained by clique sum of $G$ and $H$. Clique-sums are important in diverse areas of graph theory, e.g., Wagner's Theorem on characterization of $K_5$-minor-free graphs~\cite{wagner37}, graph minor theory~\cite{diestel2025graph} etc. This operation is also studied in the context of \emph{decomposition theorems} (e.g.\ for chordal graphs, universally-signable graphs~\cite{conforti1997} etc.). The clique-sum of two hyperbolic graphs (resp. $H$-asymptotic minor-free graphs) yields a hyperbolic graph (resp. $H$-asymptotic minor-free graphs). 
We prove a similar result for graphs with bounded strong isometric path complexity. 
For graphs $G_1,G_2,\ldots,G_t$, denote by $\displaystyle\bigoplus\limits_{i=1}^t G_i$ the graph obtained by taking pairwise clique sums of $G_1,G_2,\ldots,G_t$. (Note that it is not necessary that all graphs are identified using a single clique.) 

\begin{theorem}\label{thm:clique-sum}
    Let $G_1,G_2,\ldots,G_t$ be graphs with $\sipco{G_i}\leq k$ for all $i\in [t]$. Then the strong isometric path complexity of $G=\displaystyle\bigoplus\limits_{i=1}^t G_i$ is at most $3k+18$.
\end{theorem}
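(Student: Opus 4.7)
The plan is to exploit the tree-like structure induced by $G$'s clique-sum decomposition. Let $T$ be the block tree whose nodes are the $G_i$ and whose edges correspond to the pairwise clique-sum identifications. I would rely on two standard metric properties of clique-sums: (i) for $x, y$ in a common block $G_i$, $\distG{x}{y} = d_{G_i}(x,y)$, and every shortest $x$-$y$ path in $G$ lies in $G_i$; (ii) if a shared clique $C$ separates $v$ from $x$, then $\distG{v}{x} = \distG{v}{c} + \distG{c}{x}$ for an optimal gateway $c \in C$, and distances from $v$ to different vertices of $C$ differ by at most $1$.

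Fix $v \in V(G)$ and an isometric path $P$ of $G$ with endpoints $a, b$. The blocks visited by $P$ form a sub-path $B_{j_1}, \ldots, B_{j_s}$ of $T$, and each intersection $P \cap B_{j_i}$ is an isometric path of $B_{j_i}$. Picking a block $B_v$ containing $v$, the unique $T$-path from $B_v$ to the $P$-block sub-path meets it at a single pivot block $B^{**} = B_{j_\ell}$. I would first reduce to the case $v \in B^{**}$ by prepending a bounded number of bridging $v$-rooted isometric paths that traverse the blocks between $B_v$ and $B^{**}$ (none of which are visited by $P$); this costs a constant number of additional $v$-rooted paths, absorbed into the $+18$ overhead.

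With $v \in B^{**}$, I partition $P$ into three portions: the middle $P \cap B^{**}$, the \emph{left arm} $P \cap \bigl(B_{j_1} \cup \cdots \cup B_{j_{\ell-1}}\bigr)$, and the \emph{right arm} $P \cap \bigl(B_{j_{\ell+1}} \cup \cdots \cup B_{j_s}\bigr)$. For the middle, I apply $\sipco{B^{**}} \leq k$ directly: this covers $P \cap B^{**}$ by $k$ $v$-rooted isometric paths of $B^{**}$, which by (i) are also $v$-rooted isometric paths in $G$. For each arm, say the right one, I would traverse the arm's blocks outward from the pivot-gateway in $V(P) \cap V(B^{**}) \cap V(B_{j_{\ell+1}})$; in each successive block, I apply the $\sipco$ bound rooted at the incoming $P$-gateway, and stitch the resulting rooted paths across all blocks of the arm into $k$ global $v$-rooted isometric paths of $G$ (not $k$ per block). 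The stitching works because the $P$-gateways are vertices of the globally isometric path $P$, so their distances from $v$ satisfy the exact additive relations needed for the concatenated paths to be isometric in $G$.

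The main obstacle will be rigorously establishing that $k$ suffices per arm: the stitching may fail whenever an arm-block's optimal gateway from $v$ differs from the $P$-gateway, since within any separating clique the distances to $v$ can vary by up to $1$. I plan to handle this by adding a bounded number of corrective $v$-rooted paths that realise the exact optimal gateways, whose count is uniformly bounded using property (ii) applied clique-by-clique. All these corrections, together with the bridging paths from the first reduction, contribute the additive $18$, giving a total of at most $3k+18$ $v$-rooted isometric paths covering $P$, as required.
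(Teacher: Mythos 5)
There are two genuine gaps here, and both stem from the same issue: you treat the passage of a rooted isometric path across a separating clique as costing only an additive constant, when in fact it is the source of the multiplicative factor~$3$ in the bound. First, your reduction to the case $v\in B^{**}$ does not work as described: if you cover $P$ by $m$ many $v'$-rooted isometric paths for some $v'$ in the pivot block, you cannot convert these into $v$-rooted isometric paths by prepending $(v,v')$-isometric paths, because the concatenation of a $(v,v')$-isometric path with a $v'$-rooted isometric path ending at $y$ is isometric only when $\dist{v}{y}=\dist{v}{v'}+\dist{v'}{y}$, and this can fail --- different vertices beyond the clique may route through different gateway vertices of the clique, and distances from $v$ to the clique vertices vary by up to~$1$. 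This is exactly the pitfall the paper flags in the remark following the theorem. The paper's resolution is Lemma~\ref{lem:glue-cover}: an isometric path lying entirely on one side of a clique-sum and having an end-vertex in the shared clique can be covered by \emph{three} isometric paths rooted at an arbitrary vertex of the other side; its proof is a nontrivial contradiction argument with several distance claims. Applying that lemma to each of the $k+6$ paths obtained for a root on the pivot side is precisely what produces $3(k+6)=3k+18$. Your proposal has no substitute for this lemma and books the root conversion as part of the $+18$, which is not justified.

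Second, the per-arm stitching cannot be repaired by ``a bounded number of corrective paths applied clique-by-clique'': an arm of $P$ may traverse arbitrarily many blocks, so clique-by-clique corrections are unbounded in number; moreover, the $k$ rooted paths produced inside consecutive blocks are rooted at gateways but end at arbitrary vertices, so consecutive pieces need not even concatenate into connected paths, let alone isometric ones. The paper sidesteps block-by-block stitching entirely: since $V(P)\cap V(G_i)$ is a subpath of $P$ (\Cref{obs:contig-path}), removing the middle portion leaves at most two components, and each component, extended by one clique vertex, is itself an isometric path with an end-vertex in a separating clique; Lemma~\ref{lem:glue-cover} then covers it by only three rooted paths from the other side, no matter how many blocks it spans. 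This gives $k+6$ for a root meeting a block of $P$, and $3k+18$ in general. To make your argument rigorous you would need to prove something equivalent to Lemma~\ref{lem:glue-cover}; the rest of your outline would then largely collapse into the paper's proof.
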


We suspect that the factor of~$3$ is not needed in the bound of Theorem~\ref{thm:clique-sum}. Can it be improved to $k+c$, for some constant $c$? Note that there are graphs whose clique-sum has strictly larger strong isometric path complexity than the original graphs. For example, the strong isometric path complexity of chordal graphs (which can be constructed using clique-sums of complete graphs) is at most~4, whereas complete graphs have strong isometric path complexity~2.

\medskip\noindent\textbf{Organization of the paper.} In Section~\ref{sec:prelim}, we recall some definitions and some results. In \Cref{sec:induced}, we prove \Cref{thm:induced,thm:mono,thm:even}. In \Cref{sec:graph-op}, we prove \Cref{thm:line-power,thm:clique-sum}. Finally, we conclude in \Cref{sec:conclude}.

\section{Preliminaries}\label{sec:prelim}
In this section, we recall some definitions and some related observations.
A sequence of distinct vertices form a \emph{path} $P$, if any two consecutive vertices are adjacent. 
Whenever we fix a path $P$ {in} $G$, we shall refer to the subgraph formed by the edges between the consecutive vertices of $P$. {The \emph{length} of a path $P$ is the number of edges in $P$}. A path is \emph{induced} if there are no graph edges joining non-consecutive vertices. A path is \emph{isometric} if it is a shortest path between its end-vertices. For two vertices $u,v$ of a graph $G$, $\dist{u}{v}$ denotes the length of an isometric path between $u$ and $v$. An isometric path $P$ is $v$-rooted if one of the end-vertices of $P$ is $v$. 

For a path $P$ of a graph $G$ between two vertices $u$ and $v$, the vertices $V(P)\setminus \{u,v\}$ are \emph{internal vertices} of $P$. A path between two vertices $u$ and $v$ is called a $(u,v)$-path. Similarly, we have the notions of \emph{isometric $(u,v)$-path} and \emph{induced $(u,v)$-path}. For a vertex $r$ of $G$ and a set $S$ of vertices of $G$, the \emph{distance of $S$ from $r$}, denoted as $\dist{r}{S}$, is the minimum of the distances between any vertex of $S$ from $r$, that is, $\dist{r}{S}=\min\{\dist{r}{v}\colon v\in S\}$. 
For a subgraph $H$ of $G$, the \emph{distance of $H$ from $r$} is $\dist{r}{V(H)}$. For two sets $S$ and $T$, $\dist{S}{T}=\min\{ \dist{r}{T}\colon r\in S\}$.

\subsection{Asymptotic minors}

For a positive integer $K$ and a graph $H$, a \emph{$K$-fat minor model} of $H$ in a graph $G$ is a collection $\mathcal{M}=\left(B_v\colon v\in V(H)\right) \cup \left(P_e\colon e\in E(H)\right)$ of connected subgraphs of $G$ such that \begin{itemize}
    \item $V(B_v) \cap V(P_e)\neq \emptyset$ whenever $v$ is an end-vertex of $e$ in $H$;
    \item for any pair of distinct $X,Y\in \mathcal{M}$ not covered by the above condition, we have $\distG{X}{Y} \geq K$. 
\end{itemize}  

The subgraphs in $(B_v\colon v\in E(H))$ are the ``branch sets'' of the model. The subgraphs in $(P_e\colon e\in E(H))$ are the ``branch paths'' of the model. If $G$ contains a $K$-fat minor model of a graph $H$, then we say that $H$ is a $K$-fat minor of $G$. Note that, for $e\in E(H)$, we can assume that the connected subgraphs $P_e$ are indeed paths.

\begin{definition}[\cite{georgakopoulos2023graph}]
    A graph class $\cal G$ contains $H$ as an \emph{asymptotic minor} if for every integer $K\geq 1$, there exists a graph $G\in \cal G$, such that $G$ contains $H$ as a $K$-fat minor.
\end{definition}

We say that {a graph class} $\mathcal{G}$ is \emph{$H$-asymptotic minor-free} if $\cal G$ does not contain $H$ as an asymptotic minor. In other words, $\cal G$ is $H$-asymptotic minor-free if there exists a constant $K$, depending only on $\mathcal{G}$, such that no graph in $\mathcal{G}$ contains a $K$-fat minor model of $H$. 

\begin{figure}
    \centering
    \begin{tikzpicture}
        
        \foreach \x/\y/\w/\z [count=\n] in {0/0/-3/-3, 0/0/-1/-3, 0/0/1/-3,  -3/-3/-1/-3, -1/-3/1/-3, 1/-3/3/-3,3/-3/0/0}
        {
            \draw (\x,\y) -- (\w,\z);
        }
        \foreach \x/\y [count=\n] in {0/0, -3/-3, -1/-3, 1/-3, 3/-3}
        {
            \filldraw[fill=white, dashed] (\x,\y) ellipse (0.5cm and 0.25cm);
        }

        \draw[dashed] (-3.75,-3.5) rectangle (3.75,-2.5);
        \draw[dashed] (-0.75,-0.5) rectangle (0.75,0.5);
        \draw[dashed] (-2.1,-1.75) rectangle (-1.3,-1.5);
        \draw[dashed] (-0.85,-1.75) rectangle (-0.2,-1.5);
        \draw[dashed] (0.4,-1.75) rectangle (0.7,-1.5);
        \draw[dashed] (1.3,-1.75) rectangle (1.9,-1.5);

        \node at (0,0) {$B_u$};
        \node[left] at (-0.75, -0.25) {$B'_0$}; 
        \node[left] at (-2.1, -1.65) {$X_v$}; 
        \node[left] at (-3.75, -3.25) {$B'_1$}; 
        
    \end{tikzpicture}
    \caption{{Illustration of the proof of \Cref{prp:K2-t}. Dashed ellipses denote the branch sets of $\mathcal{M}_t$. The solid lines indicate the branch paths of $\mathcal{M}_t$. The dashed rectangular boxes indicate the branch sets of the $\left\lfloor\frac{K}{4}\right\rfloor$-fat minor model of $K_{2,t}$.}}
    \label{fig:U_tK_2t}
\end{figure}

\begin{proposition}\label{prp:K2-t}
For an integer $t\geq 1$, if a graph class $\cal G$ is $K_{2,t}$-asymptotic minor-free, then $\cal G$ is $U_{t}$-asymptotic minor-free. 
\end{proposition}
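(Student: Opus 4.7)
The plan is to prove the contrapositive: if $\mathcal{G}$ contains $U_t$ as an asymptotic minor, then it also contains $K_{2,t}$ as an asymptotic minor. Given $K' \geq 1$, set $K := 4K'$ and invoke the hypothesis to obtain some $G \in \mathcal{G}$ carrying a $K$-fat $U_t$ model $\mathcal{M}_t$ with branch sets $B_u, B_{v_1}, \ldots, B_{v_t}$ and branch paths $P_{uv_i}$ for $i \in [t]$ and $P_{v_jv_{j+1}}$ for $j \in [t-1]$. Label the vertices of $K_{2,t}$ as $\{a,b\} \cup \{c_1, \ldots, c_t\}$. Following the construction illustrated in \Cref{fig:U_tK_2t}, I take $B'_0 := B_u$ as the branch set for $a$, and $B'_1 := \bigcup_{j=1}^t B_{v_j} \cup \bigcup_{j=1}^{t-1} P_{v_jv_{j+1}}$ as the branch set for $b$ (connected since each $P_{v_jv_{j+1}}$ meets both $B_{v_j}$ and $B_{v_{j+1}}$). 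For each $i$, the branch set for $c_i$ is a suitably chosen ``middle'' subpath $X_i$ of $P_{uv_i}$, and the two branch paths for the edges $ac_i$ and $bc_i$ are the two $P_{uv_i}$-subpaths flanking $X_i$.

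The choice of $X_i$ uses that $|P_{uv_i}| \geq K$: its endpoints lie in $B_u$ and $B_{v_i}$, which are at $G$-distance $\geq K$ by the fat condition. The $1$-Lipschitz function $f_i(v) := \distG{v}{B_u}$, restricted to $V(P_{uv_i})$, runs from $0$ at its $B_u$-endpoint to a value at least $K$ at its $B_{v_i}$-endpoint, so it attains every intermediate integer. In particular, some $m_i \in V(P_{uv_i})$ satisfies $f_i(m_i) = \lfloor K/2 \rfloor$, and then $\distG{m_i}{B_{v_i}} \geq \lceil K/2 \rceil$ by the triangle inequality. I define $X_i$ as the connected component containing $m_i$, inside $V(P_{uv_i})$, of $\{v : \lfloor K/4 \rfloor < f_i(v) < K - \lfloor K/4 \rfloor\}$, which is a non-empty subpath of $P_{uv_i}$ whose vertices all lie at $G$-distance strictly greater than $\lfloor K/4 \rfloor$ from both $B_u$ and $B_{v_i}$ (for the second bound, use $\distG{v}{B_{v_i}} \geq K - f_i(v)$).

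Most fat conditions of the $K_{2,t}$ model now descend from those of $\mathcal{M}_t$: $\distG{B'_0}{B'_1} \geq K$ because $B_u$ is $K$-separated from every $B_{v_j}$ and every $P_{v_jv_{j+1}}$ (they correspond to non-incident pairs in $U_t$); for $i \neq j$, the branch sets $X_i, X_j$ and their flanking subpaths live inside the pairwise $K$-separated branch paths $P_{uv_i}, P_{uv_j}$, so are at $G$-distance $\geq K$; and $X_i$ is at distance $> \lfloor K/4 \rfloor$ from $B'_0$ by construction and at distance $> \lfloor K/4 \rfloor$ from $B'_1$ using the above bound against $B_{v_i}$ together with the $K$-separation of $P_{uv_i}$ from the other components of $B'_1$.

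The main obstacle is the fat condition between the branch path for $ac_i$ (the $P_{uv_i}$-prefix before $X_i$) and $B'_1$, and symmetrically between $bc_i$ and $B'_0$: since $P_{uv_i}$ need not be a shortest path in $G$, the function $f_i$ may oscillate along $P_{uv_i}$, so in principle this prefix could contain a vertex close to $B_{v_i} \subseteq B'_1$. I plan to handle this by refining the endpoints $\alpha, \beta$ of $X_i$: take $\alpha$ to be the last vertex of $P_{uv_i}$ whose entire preceding prefix stays at distance $> \lfloor K/4 \rfloor$ from $B_{v_i}$, and $\beta$ to be the first vertex of $P_{uv_i}$ after $\alpha$ whose entire succeeding suffix stays at distance $> \lfloor K/4 \rfloor$ from $B_u$, and then set $X_i$ as the subpath between them. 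Verifying that $\alpha < \beta$ and that the resulting $X_i$ remains inside the middle band follows from $\distG{B_u}{B_{v_i}} \geq K$ combined with the Lipschitz property of $f_i$, while the flanking prefix and suffix automatically satisfy the desired distance conditions with $B'_1$ and $B'_0$ respectively.
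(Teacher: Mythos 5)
Your overall strategy is the same as the paper's: argue by contraposition, keep $B_u$ as the branch set of one side of $K_{2,t}$, merge $\bigcup_j B_{v_j}\cup\bigcup_j P_{v_jv_{j+1}}$ into the branch set of the other side, and carve each spoke $P_{uv_i}$ into a middle branch set flanked by two branch paths. You are also right to single out the point that the paper passes over in a single ``it is easy to check'': since $P_{uv_i}$ need not be anywhere near a geodesic, position along the path does not control $G$-distance to $B_u$ or to $B_{v_i}$. Unfortunately, the repair you propose does not close this gap.

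Nothing in your definitions of $\alpha$ and $\beta$ constrains the vertices \emph{strictly between} them. Take $K=100$, so $\lfloor K/4\rfloor=25$, and suppose $P_{uv_i}$ first walks away from $B_u$ until it reaches distance $25$ from $B_{v_i}$ (so $f_i=75$ there), then doubles back until $f_i=10$, and only afterwards proceeds into $B_{v_i}$. Then $\alpha$ is the vertex where the path first comes within distance $25$ of $B_{v_i}$, while $\beta$ lies beyond the last vertex with $f_i\leq 25$, i.e.\ beyond the dip; hence $X_i=[\alpha,\beta]$ contains a vertex at distance $10$ from $B_u=B'_0$. Since $X_i$ and $B'_0$ are both branch sets, and the exemption in the fat-minor definition applies only to incident branch set--branch path pairs, this violates the required separation $\distG{X_i}{B'_0}\geq\lfloor K/4\rfloor$ (symmetrically, $X_i$ may re-approach $B_{v_i}\subseteq B'_1$). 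The Lipschitz property of $f_i$ only bounds how fast $f_i$ varies, not its monotonicity, so it cannot deliver the claim that $[\alpha,\beta]$ ``remains inside the middle band''. Worse, in this oscillating scenario \emph{no} consecutive tripartition of $P_{uv_i}$ can work: the prefix must end before the path first nears $B_{v_i}$, the suffix must begin after it last nears $B_u$, and the middle piece is then forced to contain the dip. A correct argument therefore has to modify the branch path itself (e.g.\ by shortcutting its excursions while staying inside a small neighbourhood of the original model, so that the separation from the non-incident pieces is only degraded by an additive $O(K)$ term), not merely re-choose the cut points. A further instance of the same phenomenon, which neither your write-up nor your cut-point choice addresses, is that the two flanking subpaths of a single spoke are distinct branch paths of the $K_{2,t}$-model and must themselves be $\lfloor K/4\rfloor$-separated in $G$, which again does not follow from their separation along $P_{uv_i}$.
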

\begin{proof}
 See \Cref{fig:U_tK_2t} for illustration. Assume for contradiction, that $\cal G$ is not $U_t$-asymptotic minor-free. This implies that for every integer $K\geq 12$, there exists a graph $G\in \cal G$ containing a $K$-fat minor model $\mathcal{M}_t$ of $U_t$. Let $\mathcal{M}_t=\left(B_w\colon w\in V(U_t)\right) \cup \left(P_e\colon e\in E(U_t)\right)$. We shall construct a $\left\lfloor\frac{K}{4}\right\rfloor$-fat minor model of $K_{2,t}$. 
    Let $u$ denote the universal vertex of $U_t$ and $B_u$ denote the corresponding branch set in $\mathcal{M}_t$. For each edge $uv\in E(U_t)$, let $Q_{uv}\subset V(P_{uv})$ denote an induced path of length at least $K$ such that $Q_{uv}$ meets $B_u$ or $B_v$ only at its end-vertices. By definition, such a path always exists. For each edge $uv\in E(U_t)$, let $X_v$ denote the subpath of $Q_{uv}$ whose length is exactly $\left\lfloor\frac{K}{4}\right\rfloor+3$ and is at distance at least $\left\lfloor\frac{K}{4}\right\rfloor$ from both of the end-vertices of $Q_{uv}$. Define $P^1_v, P^2_v$ to be the two paths obtained by deleting the vertices of $X_v$ from $Q_{uv}$. Define $B'_0=B_u$. Finally, let $B'_1$ be the graph induced by the vertices $$\displaystyle\left(\bigcup\limits_{w\in V(U_t)\setminus \{u\}} V(B_w)\right) \cup \left(\displaystyle\bigcup\limits_{e\in E(U_t-\{u\})} V(P_{e})\right)$$ It is easy to check that $\{B'_0,B'_1\} \cup \{X_v\colon uv\in E(U_t)\}\cup \{P^1_v,P^2_v\colon uv\in E(U_t)\}$ is a $\left\lfloor\frac{K}{4}\right\rfloor$-fat minor model of $K_{2,t}$.
\end{proof}

\subsection{Isometric path complexity} 
Let $G$ be a graph, and $r \in V(G)$ be a vertex. In this section, $\dist{.}{.}$ refers to the metric in $G$. Let $G_r$ be the graph obtained by removing all edges $xy$ from $G$ such that $\dist{r}{x}=\dist{r}{y}$. Then, for each edge $e=xy\in E(G_r)$ with $\dist{r}{x} = \dist{r}{y} - 1$, orient $e$ from $y$ to $x$. Let $\overrightarrow{G_r}$ be the directed acyclic graph formed after applying the above operation on $G'$. Note that this digraph can easily be computed in linear time using a Breadth-First Search (BFS) traversal with starting vertex $r$. 

\sloppy  {The following definition in \cite{chakraborty2026isometric,c22isom} is inspired by the terminology of posets (as the graph $\overrightarrow{G_r}$ can be seen as the Hasse diagram of a poset).} For a graph $G$ and a vertex $r\in V(G)$, two vertices $x,y\in V(G)$ are \emph{antichain vertices} if there are no directed paths from $x$ to $y$ or from $y$ to $x$ in $\overrightarrow{G_r}$. A set $X$ of vertices of $G$ is an \emph{antichain set} if any two vertices in $X$ are antichain vertices. 
For a subgraph $H$ of $G$, $\anticp{r}{H}$ shall denote a maximum antichain set of $H$ in $\overrightarrow{G_r}$. Let $\ipac{\overrightarrow{G_r}}=\max\set{|\anticp{r}{P}|\colon~P~\text{is an isometric path in }G}.$ Define $\ipac{G}=\min \set{\ipac{\overrightarrow{G_r}}\colon r\in V(G)}.$ 
Let $\ipcor{r}{G}$ denote the minimum integer $k$ such that any isometric path $P$ of $G$ can be covered by $k$ many $r$-rooted isometric paths in $G$. 
Recall that, $\ipco{G}=\min \{ \ipcor{r}{G} \colon r\in V(G)\}$ and $\sipco{G}=\max\{ \ipcor{r}{G}\colon r\in V(G) \}$.
Using Dilworth's Theorem, {one can} prove the following important lemma and proposition.

\begin{lemma}[\cite{chakraborty2026isometric}]\label{lem:ipac-ipco}
    For any graph $G$ and vertex $r$, $\ipcor{r}{G} = \ipac{\overrightarrow{G_r}}$. Therefore, $\ipco{G}=\ipac{G}$.
\end{lemma}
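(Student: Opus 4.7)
\medskip

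The plan is to prove the two inequalities $\ipcor{r}{G}\ge \ipac{\overrightarrow{G_r}}$ and $\ipcor{r}{G}\le \ipac{\overrightarrow{G_r}}$ separately. The ``therefore'' statement is then immediate by taking the minimum over all starting vertices $r$ on each side, so the work is entirely concentrated in these two inequalities for a fixed root $r$.

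First I would observe that $r$-rooted isometric paths correspond exactly to chains in $\overrightarrow{G_r}$. Indeed, along any $r$-rooted isometric path $Q$, consecutive vertices have distances from $r$ differing by exactly $1$ (else $Q$ would not be a shortest path from $r$), so every edge of $Q$ is oriented in $\overrightarrow{G_r}$, and $Q$ itself is a directed path; hence any two vertices of $Q$ are comparable in the associated partial order. To prove $\ipcor{r}{G}\ge \ipac{\overrightarrow{G_r}}$, take an isometric path $P$ attaining $\ipac{\overrightarrow{G_r}}$ and any cover of $V(P)$ by $\ipcor{r}{G}$ many $r$-rooted isometric paths $Q_1,\dots,Q_k$. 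By the previous observation, no $Q_i$ can contain two antichain vertices of $P$; hence each element of $\anticp{r}{P}$ lies in a distinct $Q_i$, giving $|\anticp{r}{P}|\le k$.

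For the reverse inequality, I would fix an arbitrary isometric path $P$ and let $k=|\anticp{r}{P}|$. Restricting the partial order induced by reachability in $\overrightarrow{G_r}$ to $V(P)$ yields a finite poset of width $k$, so by Dilworth's theorem $V(P)$ admits a partition into $k$ chains $C_1,\dots,C_k$. It remains to argue that each chain $C_i$ is contained in some $r$-rooted isometric path. Write $C_i=\{x_1,\dots,x_m\}$ ordered so that $\dist{r}{x_1}<\dots<\dist{r}{x_m}$ and such that there is a directed path from $x_{j+1}$ to $x_j$ in $\overrightarrow{G_r}$ for each $j$. The concatenation of an isometric $(r,x_1)$-path with the reversals of the directed $(x_{j+1},x_j)$-paths (for $j=1,\dots,m-1$) is a walk from $r$ to $x_m$ through every vertex of $C_i$ whose total length is
\[
\dist{r}{x_1}+\sum_{j=1}^{m-1}(\dist{r}{x_{j+1}}-\dist{r}{x_j}) = \dist{r}{x_m},
\]
so the walk has no repeated vertex and is an $r$-rooted isometric path covering $C_i$. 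Doing this for every chain gives a cover of $V(P)$ by $k\le \ipac{\overrightarrow{G_r}}$ such paths.

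The only delicate point is the construction of the covering $r$-rooted isometric path for each Dilworth chain; the rest is a direct translation between ``rooted shortest paths'' and ``chains in the BFS DAG.'' Once that chain-to-path step is carried out carefully (checking that the concatenated walk has length equal to $\dist{r}{x_m}$ and is therefore a simple isometric path), both inequalities combine to give $\ipcor{r}{G}=\ipac{\overrightarrow{G_r}}$, and the second assertion follows by minimising over $r\in V(G)$.
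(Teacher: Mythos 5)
Your proof is correct and follows exactly the route the paper indicates (it states the lemma with a citation and notes that it is proved ``using Dilworth's Theorem''): rooted isometric paths are chains in $\overrightarrow{G_r}$, giving $\ipcor{r}{G}\ge\ipac{\overrightarrow{G_r}}$, and Dilworth plus the chain-to-shortest-path construction (a walk of length $\dist{r}{x_m}$ is automatically a simple isometric path) gives the converse. No gaps.
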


{Thanks to Lemma~\ref{lem:ipac-ipco}, we may use one or the other notion depending on which is easier to use in a given proof.} We shall use the following proposition. We provide its proof for completeness. 

\begin{proposition}[\cite{chakraborty2026isometric,c22isom}]\label{prp:antichain-length}
Let $G$ be a graph and $r$, an arbitrary vertex of $G$. Consider the directed acyclic graph $\overrightarrow{G_r}$, and let $P$ be an isometric path between two vertices $x$ and $y$ in $G$. Then $|E(P)|\geq |\dist{r}{x}-\dist{r}{y}| + |\anticp{r}{P}| - 1$. 
\end{proposition}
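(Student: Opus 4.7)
The plan is to locate the antichain vertices of $\anticp{r}{P}$ in order along $P$, decompose $P$ into segments between them, and show that each segment is strictly longer than the BFS-distance gap between its endpoints allows. Concretely, I would set $A=\anticp{r}{P}$ with $a=|A|$, order its elements $u_1,\dots,u_a$ along $P$ from $x$ to $y$, write $d_i=\dist{r}{u_i}$, and let $\ell_i$ denote the length of the subpath of $P$ between $u_i$ and $u_{i+1}$. Because $P$ is isometric, this subpath length equals $\dist{u_i}{u_{i+1}}$, and the initial/terminal pieces satisfy $|E(P)|=\dist{x}{u_1}+\sum_{i=1}^{a-1}\ell_i+\dist{u_a}{y}$.

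The main step is the claim $\ell_i \geq |d_i-d_{i+1}|+1$ for every $i$. The triangle inequality already gives $\ell_i \geq |d_i-d_{i+1}|$, so the task is to rule out equality whenever $d_i\neq d_{i+1}$. Suppose without loss of generality $d_i>d_{i+1}$ and $\ell_i=d_i-d_{i+1}$. Every edge $uv\in E(G)$ satisfies $|\dist{r}{u}-\dist{r}{v}|\leq 1$, so dropping from BFS-level $d_i$ to BFS-level $d_{i+1}$ over exactly $d_i-d_{i+1}$ edges forces each edge of the subpath to decrease BFS-distance by one. But then each of these edges lies in $G_r$ and is oriented in $\overrightarrow{G_r}$ from its higher-level endpoint to its lower-level endpoint, producing a directed $u_i$-to-$u_{i+1}$ path in $\overrightarrow{G_r}$ and contradicting that $u_i,u_{i+1}$ are antichain. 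When $d_i=d_{i+1}$ the claim reduces to $\ell_i\geq 1$, which holds since $u_i\neq u_{i+1}$.

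Summing the key claim and telescoping yields $\sum_{i=1}^{a-1}\ell_i \geq \sum_{i=1}^{a-1}|d_i-d_{i+1}|+(a-1) \geq |d_1-d_a|+(a-1)$. Combining this with the triangle-inequality bounds $\dist{x}{u_1}\geq |\dist{r}{x}-d_1|$ and $\dist{u_a}{y}\geq |d_a-\dist{r}{y}|$, and then applying the triangle inequality on the real numbers $\dist{r}{x}, d_1, d_a, \dist{r}{y}$, gives $|E(P)| \geq |\dist{r}{x}-d_1|+|d_1-d_a|+|d_a-\dist{r}{y}|+(a-1) \geq |\dist{r}{x}-\dist{r}{y}|+a-1$, as required.

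I expect the only real obstacle to be the key claim: once the observation ``every step must decrease by exactly one, hence we would recover a directed $u_i$-to-$u_{i+1}$ path in $\overrightarrow{G_r}$'' is isolated and used to convert antichainness into a length penalty, the rest is routine triangle-inequality bookkeeping applied to the distances measured from~$r$.
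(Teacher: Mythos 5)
Your proposal is correct and follows essentially the same approach as the paper: the paper counts, globally, $|\dist{r}{x}-\dist{r}{y}|$ ``descending'' edges plus one ``non-descending'' edge forced inside each of the $|\anticp{r}{P}|-1$ gaps between consecutive antichain vertices (these edge sets being disjoint), which is exactly your per-segment bound $\ell_i\geq|d_i-d_{i+1}|+1$ after telescoping. Your justification of the key claim (a segment of length exactly $|d_i-d_{i+1}|$ would be a directed path in $\overrightarrow{G_r}$, contradicting antichainness) is the same observation the paper uses, merely phrased as a contradiction.
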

\begin{proof}
Orient the edges of $P$ from $y$ to $x$ in $G$. First, observe that $P$ must contain a set $E_1$ of oriented edges such that $|E_1|=|\dist{r}{y}-\dist{r}{x}|$ and for any $\overrightarrow{ab}\in E_1$, $\dist{r}{a}=\dist{r}{b}+1$. Let the vertices of a largest antichain set of $P$ in $\overrightarrow{G_r}$, \ie, $\anticp{r}{P}$, be ordered as $a_1,a_2,\ldots,a_t$ according to their occurrence while traversing $P$ from $y$ to $x$. For $i\in [2,t]$, let $P_i$ be the subpath of $P$ between $a_{i-1}$ and $a_i$. Observe that for any $i\in [2,t]$, since $a_i$ and $a_{i-1}$ are antichain vertices, there must exist an oriented edge $\overrightarrow{b_ic_i}\in E(P_i)$ such that either $\dist{r}{b_i} = \dist{r}{c_i}$ or $\dist{r}{b_i}=\dist{r}{c_i} - 1$. Let $E_2=\{b_ic_i\}_{i\in [2,t]}$. Observe that $E_1\cap E_2=\emptyset$ and therefore $|E(P)|\geq |E_1| + |E_2| = |\dist{r}{y}-\dist{r}{x}| + |\anticp{r}{P}| - 1$.
\end{proof}

\section{Asymptotic minors and {graphs with} restricted holes} \label{sec:induced}

In \Cref{sec:asymp}, we show that $U_t$-asymptotic minor-free graphs have bounded strong isometric path complexity and prove \Cref{thm:induced}. In \Cref{sec:mono}, we show that monoholed graphs are $U_4$-asymptotic minor-free and prove \Cref{thm:mono}. Finally, in \Cref{sec:even}, we show that even-hole free graphs have unbounded isometric path complexity and prove \Cref{thm:even}.

\subsection{Asymptotic minors}\label{sec:asymp}
In this section, we prove \Cref{thm:induced}. Let $\mathcal{G}$ be a graph class that does not contain $U_t$ as an asymptotic minor, for some $t\geq 1$. Then, by the definition, there exists an integer $K$ and a graph $G\in \mathcal{G}$ such that $G$ does not contain a $K$-fat minor model of $U_t$. We can assume $K\geq 5$. The following and \Cref{lem:ipac-ipco} prove \Cref{thm:induced}.

\newcommand{\mbound}{{12K}}
\newcommand{\addbound}{{5}}

\begin{theorem}\label{lem:house-ipac}
    For any two integers $ t\geq 2, K\geq 5$, if $G$ does not contain a $K$-fat minor model of $U_t$, then for any vertex $r\in V(G)$, $\ipac{\overrightarrow{G_r}} \leq (\mbound+\addbound)t-1$. 
\end{theorem}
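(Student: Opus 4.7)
The plan is to prove the contrapositive: assuming $\ipac{\overrightarrow{G_r}} \geq (12K+5)t$ for some vertex $r \in V(G)$, I would construct a $K$-fat minor model of $U_t$ in $G$, contradicting the hypothesis that $G$ is $U_t$-asymptotic-minor-free.

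\textbf{Skeleton.} Let $P$ be an isometric path witnessing the lower bound, and let $A \subseteq V(P)$ be an antichain of $\overrightarrow{G_r}$ of size at least $(12K+5)t$. Labeling the vertices of $A$ in the order they appear on $P$ and selecting every $(12K+5)$-th one yields antichain vertices $a_1, \ldots, a_t \in V(P)$ such that each subpath $P[a_i, a_{i+1}]$ contains at least $12K + 6$ vertices of $A$. Since $P[a_i, a_{i+1}]$ is itself isometric, \Cref{prp:antichain-length} gives the antichain-margin estimate
$$\distG{a_i}{a_{i+1}} \;\geq\; |D_i - D_{i+1}| + (12K + 5),$$
where $D_i := \distG{r}{a_i}$. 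For each $i$, fix a shortest path $R_i$ from $r$ to $a_i$.

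\textbf{Key Lemma.} For every $i \neq j$ and every $w \in V(R_i)$, $\distG{w}{a_j} \geq 6K + 3$. Setting $c := \distG{w}{a_j}$, the triangle inequality $\distG{r}{w} \geq D_j - c$ combined with $\distG{a_i}{w} = D_i - \distG{r}{w}$ (since $R_i$ is isometric) yields $\distG{a_i}{a_j} \leq (D_i - D_j) + 2c$ when $D_i \geq D_j$, and $\distG{a_i}{a_j} \leq 2c$ when $D_i < D_j$. In either case, the antichain-margin estimate forces $2c \geq 12K + 5$, so $c \geq 6K + 3$.

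\textbf{Construction.} Write $u, v_1, \ldots, v_t$ for the vertices of $U_t$, where $u$ is universal and $v_1 v_2 \cdots v_t$ is the path. With parameters $L, d$ of order $\Theta(K)$, I would define:
\begin{itemize}
\item $B_{v_i}$ as the sub-arc of $P$ of length $2L$ centered at $a_i$;
\item $P_{v_i v_{i+1}}$ as the sub-arc of $P$ between $B_{v_i}$ and $B_{v_{i+1}}$ together with its two boundary vertices;
\item $P_{u v_i}$ as the tail $R_i[s_i, a_i]$, where $s_i$ is the vertex on $R_i$ at distance $d$ from $a_i$;
\item $B_u = \bigcup_{i=1}^t R_i[r, s_i]$, which is connected through $r$.
\end{itemize}
Isometry of $P$ reduces distances among sub-arcs of $P$ (the $B_{v_i}$'s and $P_{v_j v_{j+1}}$'s) to distances along $P$, each at least $K$ provided $L$ is chosen so the minimum gap along $P$ exceeds $K$. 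The Key Lemma in turn gives $\distG{R_i}{B_{v_j}} \geq (6K+3) - L$ and $\distG{R_i}{P_{v_j v_{j+1}}} \geq (6K+3) - L$ for $j \neq i$, both at least $K$ whenever $L \leq 5K + 3$; this controls all interactions of $R_i$ with far-away portions of the model.

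\textbf{Main obstacle.} The delicate step is satisfying all distance conditions at each $B_{v_i}$ simultaneously. The three branch paths $P_{u v_i}, P_{v_{i-1} v_i}, P_{v_i v_{i+1}}$ all meet $B_{v_i}$ yet must be pairwise at distance $\geq K$ in $G$, while $B_u$ (which contains an ``upstream'' portion of $R_i$) must be at distance $\geq K$ from $B_{v_i}$. A naive application of the definitions above imposes the competing requirements $L \geq d + K$ (from $\distG{P_{u v_i}}{P_{v_{i-1} v_i}} \geq K$, since $P_{u v_i}$ reaches $a_i$ while $P_{v_{i-1} v_i}$ lies at distance $\geq L$ from $a_i$ along $P$) and $d \geq L + K$ (from $\distG{B_u}{B_{v_i}} \geq K$, since $s_i \in B_u$ is at distance $d$ from $a_i$ while $B_{v_i}$ spans $L$ around $a_i$). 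Since these are inconsistent, the actual construction must refine the naive one---e.g.\ by rerouting $P_{u v_i}$ to attach to $B_{v_i}$ at a vertex offset from $a_i$ (chosen using the antichain structure of $P$ so that the offset sits inside $B_{v_i}$ while the remainder of $P_{u v_i}$ stays away from $P_{v_{i-1} v_i}$ and $P_{v_i v_{i+1}}$), or by augmenting $B_u$ with a longer detour that keeps it away from $B_{v_i}$ without breaking connectivity. The constant $12K + 5$ in the antichain margin is exactly what leaves enough slack for both adjustments to be performed simultaneously, which accounts for the factor $12K$ in the final bound.
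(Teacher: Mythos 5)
Your overall strategy coincides with the paper's: take the contrapositive, pick $t$ antichain vertices $a_1,\dots,a_t$ on an isometric path $P$ spaced $12K+5$ apart in antichain order, run ``pillars'' from $r$ down to (near) each $a_i$, and use the upper parts of the pillars as $B_u$, their lower parts as the branch paths $P_{uv_i}$, and pieces of $P$ as the remaining branch sets and branch paths. Your Key Lemma (every vertex of a shortest $(r,a_i)$-path is at distance $\Omega(K)$ from $a_j$, $j\neq i$) is correct and is the same antichain-margin mechanism (\Cref{prp:antichain-length}) that drives the paper's Claims. However, the proposal has genuine gaps at exactly the points where the real work lies.

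First, the assertion $\distG{R_i}{P_{v_jv_{j+1}}}\geq (6K+3)-L$ does not follow from the Key Lemma: the segment $P_{v_jv_{j+1}}$ is not contained in a ball of radius $L$ around $a_j$ or $a_{j+1}$ --- its middle can be arbitrarily far (along $P$) from both anchors, so controlling the distance from $R_i$ to the single points $a_j$ says nothing about the distance from $R_i$ to these segments. The paper needs a separate argument for this (\Cref{clm:induced:2}: any vertex of $P$ separated from $b_i$ by at least $2K+1$ antichain vertices is at distance $\geq K$ from the pillar), and correspondingly it delimits the segments $D_j$ by \emph{antichain counts}, not by graph distance, so that \Cref{prp:antichain-length} can be invoked. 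Second, a raw shortest $(r,a_i)$-path $R_i$ may intersect $P$, or run adjacent to it, at many vertices other than $a_i$; your branch path $R_i[s_i,a_i]$ could then collide with $B_{v_{i-1}}$, $P_{v_{i-1}v_i}$, etc. The paper avoids this by replacing $R_i$ with the output of Procedure~\ref{prc:pillar}, an induced truncation of $R_i$ that meets $P$ only at its bottom endpoint $b_i$ (with $b'_i$ its unique neighbour adjacent to $P$), at the cost of $b_i\neq a_i$ --- which then requires \Cref{clm:lb1,clm:lb2} to re-establish the spacing. Third, and most importantly, you explicitly leave unresolved the inconsistency between the constraints at $B_{v_i}$ ($L\geq d+K$ versus $d\geq L+K$); this is the heart of the proof, not a detail. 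The paper's resolution is structural, not a tuning of constants: the cut point $c_i$ between $B_u$ and $P_{uv_i}=Q'_i$ is defined \emph{adaptively} as the maximal suffix of the pillar at distance $\geq K$ from all other pillars (with \Cref{clm:3} guaranteeing this suffix has length $\geq 5K+1$), and the branch set $Z_i$ is not a ball around $a_i$ but a thin set hugging $P$ (the $(a_i,b_i)$-subpath, the single off-path vertex $b'_i$, and the stretches of $P$ up to $D_{i-1}$ and $D_i$), after which \Cref{clm:far-away-1,clm:far-away-3} verify the separations. Without supplying some such refinement, the proposal does not yield a valid $K$-fat minor model, so as written it is an outline with the decisive step missing.
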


\medskip\noindent  We devote the remainder of this section to proving \Cref{lem:house-ipac}.  For integers $ t\geq 2, K\geq 5$, assume for contradiction that there exists a graph $G$ that does not contain a $K$-fat minor model of $U_t$, and there exists a vertex $r\in V(G)$ such that $\ipac{\overrightarrow{G_r}} \geq (\mbound+\addbound)t$.

 Observe that there exists an isometric path $P$ with $|\anticp{r}{P}| \geq (\mbound+\addbound)t$. Assume that the two end-vertices of $P$ are $u$ and $v$, respectively. Notice that $r\notin \set{u,v}$ (otherwise we would have $\anticp{r}{P}=\emptyset$). 
 Let $k=|\anticp{r}{P}|$ and the elements of $\anticp{r}{P}$ are ordered as $p_1,p_2,\ldots,p_k$ as they are encountered while traversing $P$ from $u$ to $v$. Let $a_1=p_1$, and for $i\in [2,t]$ let $n(i)=(\mbound+\addbound)(i-1)$, $a_i=p_{n(i)}$. Let $A=\set{a_i\colon i\in [t]}$. Recall that this set forms an antichain in $\overrightarrow{G_r}$, and its definition implies that any two of its vertices are far apart from each other.

\begin{figure}[t]
	\centering
    \scalebox{0.7}{
    \begin{tabular}{cc}
     \begin{tikzpicture}[scale=1.5]
	    \draw[thick] (5,0) -- (0,0);
        \draw[thick, dash dot] (2.5,4) -- (2.5,0.5);\draw[thick] (2.5,0.5) -- (2.5,0); \draw[thick, densely dotted] (2.5,4) -- (4,0); \draw[thick, dash dot] (2.5,0.5) -- (1.5,0);
            \foreach \x/\y [count = \n] in
			{0/0, 0.5/0, 1/0, 1.5/0, 2/0, 2.5/0, 3/0, 3.5/0, 4/0, 4.5/0, 2.5/3.5, 2.5/3, 2.5/2.5, 2.5/2, 2.5/1.5, 2.5/1, 2.5/0.5, 2.6667/3.5556, 2.8333/3.1111, 3.0/2.6667, 3.1667/2.2222, 3.3333/1.7778, 3.5/1.3333, 3.6667/0.8889, 3.8333/0.4444, 5/0}
			{
				 \filldraw (\x, \y) circle (1.5pt);
			}
            \node[above] at (2.5, 4.1) {$r$};  \filldraw (2.5, 4) circle (2pt);
            \node[above] at (0.75,0) {$P$};
            \node[right] at (5.1,0) {$v$};
            \node[right] at (-0.4,0) {$u$};

            \node[below] at (2.5,0) {$a_1$};
            \node[below] at (1.5,0) {$b_1$};
            \node[below] at (4.25,0) {$a_2=b_2$};
            \node[right] at (2.5,0.5) {$b'_1$};
            \node[right] at (3.8333,0.4444) {$b'_2$};
            \node[right] at (2.5,2) {$Q_1$};
            \node[right] at (3.3333,1.7778) {$Q_2$};
	\end{tikzpicture} 
    \end{tabular}}
	\caption{Illustration of the notations used to denote the paths returned by Procedure~\ref{prc:pillar} for the horizontal $(u,v)$-path $P$ (indicated by the solid black line). The path $Q_1$ (indicated with dash-dots) is the vertical $(r,b'_1)$-path with $b_1$ appended to it. The path $Q_2$ (indicated with dots) is the downward $(r,a_2)$-path. }\label{fig:fat}
\end{figure}

\newcommand{\proc}[4]{\textsc{Connect}\left(#1,#2,#3,#4\right)}

\begin{algorithm}[t]
	\renewcommand{\algorithmcfname}{Procedure}
	\newcommand{\hrulealg}[0]{\vspace{1mm} \hrule \vspace{1mm}}
	\caption{A procedure $\proc{r}{P}{a}{u}$ to build pillars. \label{prc:pillar}}
	\SetKwInOut{KwIn}{Input}
	\SetKwInOut{KwOut}{Output}
	\KwIn{A root vertex $r$, an isometric path $P$ whose end-vertices are distinct from $r$, a vertex $a\in V(P)$, and an end-vertex $u$ of $P$.}
	\KwOut{A path.}
	\hrulealg

	Let $Q$ be any $(r,a)$-isometric path in $G$;
	
	Let $Q'$ be the maximal $(r,a')$-subpath of $Q$ such that no internal vertex of $Q'$ is adjacent to any vertex of $P$.
	
	\If{$a'\in V(P)$}{
		Return $Q'$.
	}
	\Else {
		Let $a''\in V(P) \cap N(a')$ be the vertex such that no internal vertex of the $(u,a'')$-subpath of $P$ is a neighbor of $a'$. 
		
		Let $Q''$ be the path obtained by appending $a''$ to $Q'$.
		
		Return $Q''$.
	}
	
\end{algorithm} 

Now, using the vertices of $A$ and Procedure~\ref{prc:pillar}, we construct special induced paths that will aid us to construct the $K$-fat minor model of $U_t$. First, we illustrate Procedure~\ref{prc:pillar} with an example.

\medskip \noindent \textbf{Illustration of Procedure~\ref{prc:pillar}.} 
For illustration purposes, consider the graph in \Cref{fig:fat}. Let $P$ be the $(u,v)$-path shown in the figure, and consider $\proc{r}{P}{a_1}{u}$. In this case, the vertex $a$ (used in the description of Proposition~\ref{prc:pillar}) is $a_1$, Procedure~\ref{prc:pillar} returns from Line 8, and therefore $Q_1=r\ldots b'_1b_1$. Now consider $\proc{r}{P}{a_1}{u}$. In this case, Procedure~\ref{prc:pillar} returns from Line 4, $b_2=a_2$, and $Q_2$ is the downward $(r,a_2)$-path.

  \medskip \noindent \textbf{Finding special induced paths.} Now, we shall describe a general procedure to find special induced paths which will allow us to create branch paths and branch sets of a $K$-fat minor model of $U_t$ (and hence arrive to a contradiction). Consider Procedure~\ref{prc:pillar}.
  Let $Q_i$ denote the output of $\proc{r}{P}{a_i}{u}$ where $a_i\in A$.  For each $i\in [t]$, observe that one end-vertex of $Q_i$ lies on $P$. Let us denote this end-vertex as $b_i$ and its neighbor in $Q_i$ as $b'_i$. 
  The construction of Procedure~\ref{prc:pillar} implies the following claim establishing some special properties of these paths.

  \begin{claim}\label{clm:induced:1} 
     For each $i\in [t]$, the following holds:
     \begin{enumerate}[label=(\alph*)]
         \item  the path $Q_i$ is an induced path;
         \item  no vertex of $V(Q_i)\setminus \set{b_i,b'_i}$ is adjacent to a vertex of $P$;
         \item\label{clm:it:c} the $(r,b'_i)$-subpath of $Q_i$ is a subpath of an $(r,a_i)$-isometric path.
     \end{enumerate}
  \end{claim}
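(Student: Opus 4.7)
The plan is to analyze the two branches in which Procedure~\ref{prc:pillar} can return: Line~4 (when the endpoint $a'$ of the maximal prefix $Q'$ already lies on $P$) and Line~8 (when $Q_i$ is obtained by appending a neighbor $a''\in V(P)\cap N(a')$ to $Q'$). The key observation used throughout is that $Q'$ is a subpath of the isometric $(r,a_i)$-path $Q$, so $Q'$ is already induced and satisfies $\dist{r}{q_j}=j$ along its vertices.

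Part~(c) is then immediate: in the Line~4 branch the $(r,b'_i)$-subpath of $Q_i$ is $Q'$ with its last vertex $a'=b_i$ removed, while in the Line~8 branch it is exactly $Q'$. Either way it is a subpath of $Q$, which is an $(r,a_i)$-isometric path. For part~(b), apart possibly from $r$ itself, every vertex of $V(Q_i)\setminus\{b_i,b'_i\}$ is an internal vertex of $Q'$, and such internal vertices have no neighbor on $V(P)$ by the maximality defining $Q'$. For part~(a), $Q'$ is induced; in the Line~8 branch the only new vertex is $a''\in V(P)$, and each internal vertex of $Q'$ is non-adjacent to $V(P)$ and hence to $a''$, so the only potential new chord is $r\,a''$.

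The main obstacle in~(a) and~(b) is thus the single boundary situation in which $r$ itself is adjacent to some vertex of $V(P)$: in~(b) this would leave $r$ as a vertex of $V(Q_i)\setminus\{b_i,b'_i\}$ adjacent to $V(P)$, and in~(a) it would create the chord $r\,a''$ in the Line~8 branch. The plan to resolve this uniformly is to exploit the freedom of ``any'' in Line~1 of Procedure~\ref{prc:pillar}: whenever $r$ has a neighbor $w\in V(P)$ that lies on some isometric $(r,a_i)$-path, one insists that $Q$ starts with the edge $rw$; then $w\in V(P)$ forces $Q'$ to stop immediately with $a'=w\in V(P)$, and the procedure returns at Line~4 with $Q_i=rw$. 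In that degenerate case $b'_i=r$, $b_i=w$, and $V(Q_i)\setminus\{b_i,b'_i\}=\emptyset$, so all three parts hold trivially. The subtle step of the argument is verifying that this choice of $Q$ is always possible---or, failing that, absorbing the exceptional $a_i\in A$ into the slack between the antichain size $(12K+5)t$ and the $t$ elements of $A$ eventually required for the fat-minor construction.
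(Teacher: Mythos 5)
Your two-branch case analysis (return at Line~4 versus Line~8) is exactly the paper's, and your treatment of part~(c), as well as of every vertex of $V(Q_i)\setminus\set{b_i,b'_i}$ \emph{other than $r$} in parts~(a) and~(b), is correct: internal vertices of $Q'$ are non-adjacent to $V(P)$ by the maximality in Line~2, $Q'$ is induced as a prefix of the isometric path $Q$, and in both branches the $(r,b'_i)$-subpath of $Q_i$ is a subpath of $Q$.

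The gap is in how you dispose of the root $r$. Your repair --- choosing $Q$ to start with an edge $rw$ into $P$ --- only applies when such a neighbour $w\in V(P)$ satisfies $\dist{r}{a_i}=1+\dist{w}{a_i}$, which need not hold; and the fallback of ``absorbing exceptional $a_i$'s into the slack'' cannot work, because if $r$ had \emph{any} neighbour on $P$ then \emph{every} $Q_i$ would contain $r$ as a vertex adjacent to $P$, so discarding some elements of $A$ changes nothing. The case is instead closed by showing that $r$ has no neighbour on $P$ at all in this setting. Suppose $rw\in E(G)$ with $w\in V(P)$, and let $x$ be an end-vertex of $P$. The $(w,x)$-subpath of $P$ is isometric of length $\dist{w}{x}$, and $|\dist{r}{w}-\dist{r}{x}|\geq \dist{r}{x}-1\geq \dist{w}{x}-2$; so \Cref{prp:antichain-length} forces that subpath to contain at most $3$ pairwise antichain vertices. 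Applying this to both end-vertices of $P$ yields $|\anticp{r}{P}|\leq 6$, contradicting $|\anticp{r}{P}|\geq (12K+5)t\geq 130$. (This is the same style of argument as \Cref{clm:lb1}; the paper's own proof of part~(b) is silent on this point and just says it ``follows from the construction''.) With $r$ non-adjacent to $V(P)$, there is no offending adjacency in~(b) and no chord $ra''$ in~(a), and your main analysis goes through.
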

  \begin{subproof}
      To prove (a) fix any $i\in [t]$. The path $Q_i$ is the path returned by $\proc{r}{P}{a_i}{u}$. If $\proc{r}{P}{a_i}{u}$ returned from Step-$4$ of Procedure~\ref{prc:pillar}, then $Q_i$ is an $(r,w_i)$-isometric path and therefore is an induced path. Otherwise, $\proc{r}{P}{a_i}{u}$ must have returned from Step-$8$ of Procedure~\ref{prc:pillar}. In this case, $Q'_i$ is constructed by taking a subpath of an $(r,a_i)$-isometric path and appending to it one more vertex from the path $P$. By construction (see Step-$1$ and Step-$6$), $Q'_i$ remains an induced path. $(b)$ follows easily from the construction. To observe $(c)$, we consider two cases. If Procedure~\ref{prc:pillar} returned from Line 4, then Line 1 and 2 imply $b_i=a_i$, and the path returned by $\proc{r}{P}{a_i}{u}$ is itself an $(r,a_i)$-isometric path and thus (c) follows. Otherwise, $b_i\not= a_i$ but lies on $P$, and due to Line 6 and 7, $Q_i - b_i$ is a subpath of the $(r,a_i)$-isometric path considered in Line 1. Now (c) follows.
  \end{subproof}

The properties proved in the above claim will allow us to use the paths $\{Q_i \colon i\in [t] \}$ to construct branch paths and branch sets of $U_t$, and reach a contradiction. Note that the end-vertex of $Q_i$ that lies on $V(P)$ (i.e. $b_i$) may not be the same as $a_i$ which was initially used (in $\proc{r}{P}{a_i}{u}$) to construct $Q_i$. We prove the following claim.

\begin{figure}[t]
    \centering
    \begin{subfigure}[t]{0.45\textwidth}
    \centering
        \begin{tikzpicture}
        \draw[thick, densely dotted] (0,4) -- (0,1.5); \draw[thick, dash dot] (0,0) -- (0,1.5);\draw[thick] (0,0) -- (-0.5,0) -- (-0.5,0.5) -- (-1,0.5) -- (-1,1) -- (0,1.5);
        \draw[dashed] (-1,1) -- (-2,1);\draw[dashed] (0,0) -- (1,0);
        
            \foreach \x/\y [count = \n] in	{0/0.0, 0/0.5, 0/1.0, 0/1.5, 0/2.0, 0/2.5, 0/3.0, 0/3.5, 0/4.0, -1/1, -1/0.5, -0.5/0.5, -0.5/0, -2/1, 1/0 }
			{
				 \filldraw (\x, \y) circle (1.5pt);
			}
            \node [above] at (0,4) {$r$};
            \node [below] at (0,0) {$a_i$};
            \node [right] at (0,1.5) {$b'_i$};
            \node [left] at (-2,1) {$u$};
            \node [right] at (1,0) {$v$};
            \node [below] at (-1.25,1) {$b_i$};
        \end{tikzpicture}
        \subcaption{Illustration of the proof of \Cref{clm:lb1}. The rectilinear curve (made of solid and dashed lines) between $u$ and $v$ represents $P$. The (vertical) dotted line represents $R$. The (vertical) dash-dotted line represents $S'$ . }\label{fig:clm2.2}
    \end{subfigure}~
    \begin{subfigure}[t]{0.45\textwidth}
    \centering
            \begin{tikzpicture}
                \draw[thick, densely dotted] (0,4) -- (0,1.5);
                \draw[thick, dash dot] (0,0) -- (0,1.5);
                \draw[thick, densely dotted] (0,4) -- (-2.5,0);
                \draw[thick] (0,0) -- (-0.5,0) -- (-0.5,0.5) -- (-1,0.5) -- (-1,1); \draw[thick, densely dotted] (-1,1)  -- (0,1.5);
                \draw[thick, gray] (-1,1) -- (-2.5,0); \draw[thick, dashed] (-2.5,0) -- (-3,0);
        
        \draw[dashed] (0,0) -- (1,0);
        
            \foreach \x/\y [count = \n] in	{0/0.0, 0/0.5, 0/1.0, 0/1.5, 0/2.0, 0/2.5, 0/3.0, 0/3.5, 0/4.0, -1/1, -1/0.5, -0.5/0.5, -0.5/0, 1/0, -2.5/0, -3/0,  -2.188/0.500, -1.875/1.000, -1.563/1.500, -1.250/2.000, -0.938/2.500, -0.625/3.000, -0.313/3.500 }
			{
				 \filldraw (\x, \y) circle (1.5pt);
			}
            \node [above] at (0,4) {$r$};
            \node [below] at (0,0) {$a_j$};
            \node [right] at (0,1.5) {$b'_j$};
            \node [left] at (-3,0) {$u$};
            \node [right] at (1,0) {$v$};
            \node [above] at (-1,1) {$b_j$};
            \node [below] at (-2,0) {$a_i=b_i$};
            \end{tikzpicture}
        \subcaption{Illustration of the proof of \Cref{clm:lb2}. The dotted line between $r,b_j$ indicates $Q_j$. The dotted line between $r,b_i$ indicates $Q_i$.  The gray line and the rectilinear black curve together indicate $S$. The gray line also indicates $R_{ij}$ in this example.}\label{fig:clm2.3}
    \end{subfigure}

    \begin{subfigure}[t]{0.45\textwidth}
    \centering
        \begin{tikzpicture}
        
            \draw[thick, gray] (1,1) -- (0.65,2.05);
            \draw[thick,densely dotted] (0.5,2.5) -- (0.65,2.05);
            \foreach \x/\y [count = \n] in	{0/4, -3/0, 3/0, -2/0, 1/1, 0.5/2.5, 0.65/2.05, -0.5/3, -0.75/2.5, -1.5/1}
			{
				 \filldraw (\x, \y) circle (1.5pt);
			}

            \draw[thick]   plot[smooth, tension=1] coordinates { (-3,0) (-2,0) (1,1) (3,0)};

            \draw[thick, densely dotted] (0,4) -- (0.5,2.5);
            \draw[dashed ] (0.5,2.5) -- (-0.5,3);
            \draw[thick, densely dotted] (0,4) -- (-2,0);
            \node [above] at (0,4) {$r$};
            
             \node [right] at (0.65,2.05) {$c_i$};
             \node [right] at (0.5,2.5) {$c'_i$};
            \node [left] at (-3,0) {$u$};
            \node [right] at (3,0) {$v$};
            \node [below] at (1,1) {$b_i$};
             \node [below] at (-2,0) {$a_j$};
             \node [left] at (-0.5,3) {$d_j$};
             \node [left] at (-0.75,2.5) {$d'_j$};
             \node [left] at (-1.5,1) {$f_j$};

        \end{tikzpicture}
        \subcaption{Illustration of the proof of \Cref{clm:3}. The thick gray line indicates $Q'_i$.  The dotted line between $r$ and $a_j$ indicates $Q_j$. The dotted line between $r$ and $c_i$ indicates a subpath of $Q_i$. The gray line indicates $Q'_i$. The dashed line indicates a $(c'_i,d_j)$ isometric path.}\label{fig:clm:3}
    \end{subfigure}~
    \begin{subfigure}[t]{0.45\textwidth}
    \centering
        \begin{tikzpicture}
         \draw[gray, thick] (1,1) -- (-0.5,2);
            
            \foreach \x/\y [count = \n] in	{0/4, -1/0, -2/0, 3/0, 1/1, -0.5/2, -0.75/1, -0.875/0.5, -1.35/0 }
			{
				 \filldraw (\x, \y) circle (1.5pt);
			}

            \draw[thick]   plot[smooth, tension=1] coordinates { (-1,0) (1,1) (3,0)};

            \draw[thick] (-2,0) -- (-1,0);
            \draw[thick] (-0.875,0.5) -- (-1.35,0);
           
            \draw[thick, densely dotted] (0,4) -- (-0.875,0.6); \draw[thick] (-0.875,0.6) -- (-1,0);
            \node [above] at (0,4) {$r$};
            \node [below] at (1,1) {$c$};
            \node [left] at (-0.5,2) {$d$};
            \node [left] at (-2,0) {$u$};
            \node [right] at (3,0) {$v$};
            \node [left] at (-0.875,0.6) {$b'_i$};
             \node [right] at (-0.75,1) {$z$};
            \node [below] at (-1,0) {$a_i$};
            \node [below] at (-1.35,0) {$b_i$};
        \end{tikzpicture}
        \subcaption{Illustration of the proof of \Cref{clm:induced:2}. The dotted line along with the vertex labeled $b_i$ indicates $Q_i$. The gray line indicates a $(c,d)$-isometric path. }\label{fig:clm:induced:2}
    \end{subfigure}
    \caption{ Illustrations of notations used in the proofs for the claims in \Cref{lem:house-ipac}.}
    \label{fig:placeholder}
\end{figure}

\begin{claim}\label{clm:lb1}
    For each $i\in [t]$, let $X_i$ denote the $(a_i,b_i)$-subpath of $P$. Then $|A \cap V(X_i)|\leq 3$.
\end{claim}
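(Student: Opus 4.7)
The plan is to assign to each vertex of $A\cap V(X_i)$ a small non-negative ``height'' in $\{0,1,2\}$ and then show these heights are forced to be pairwise distinct by the antichain property; since only three values are available, at most three such vertices can exist. First I would dispatch the trivial case $b_i=a_i$, where $X_i=\{a_i\}$ and the bound is immediate, and henceforth assume $b_i\neq a_i$. By \Cref{clm:induced:1}\ref{clm:it:c}, the $(r,b'_i)$-subpath of $Q_i$ is a subpath of an $(r,a_i)$-isometric path, giving $\dist{r}{a_i}=\dist{r}{b'_i}+\dist{b'_i}{a_i}$. Combined with $\dist{b_i}{b'_i}=1$, the triangle inequality on $\{a_i,b_i,b'_i\}$, and the fact that $X_i$ is an isometric $(a_i,b_i)$-path of length $L:=|E(X_i)|$, this yields $\dist{b'_i}{a_i}\geq L-1$ and hence $\dist{r}{b_i}=\dist{r}{a_i}-L+\delta$ for some $\delta\in\{0,1,2\}$.

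Next, I would order the vertices of $A\cap V(X_i)=\{a_{j_1},\dots,a_{j_s}\}$ along $X_i$ from $b_i$ toward $a_i$, so that $a_{j_s}=a_i$, and set $L_p:=\dist{b_i}{a_{j_p}}$. Combining the triangle inequality $\dist{r}{a_{j_p}}\geq \dist{r}{a_i}-(L-L_p)$ (using $\dist{a_i}{a_{j_p}}=L-L_p$ along $X_i$) with the triangle inequality $\dist{r}{a_{j_p}}\leq \dist{r}{b_i}+L_p=\dist{r}{a_i}-(L-L_p)+\delta$ allows me to write $\dist{r}{a_{j_p}}=\dist{r}{a_i}-(L-L_p)+\eta_p$ for some $\eta_p\in\{0,\ldots,\delta\}$; in particular $\eta_s=0$.

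Finally, for each pair $p<q$, I would apply \Cref{prp:antichain-length} to the $(a_{j_p},a_{j_q})$-subpath of $X_i$ (of length $L_q-L_p$, containing the two antichain vertices $a_{j_p},a_{j_q}$), which gives $L_q-L_p\geq |\dist{r}{a_{j_p}}-\dist{r}{a_{j_q}}|+1$. Substituting the parameterisation simplifies this to $\eta_q\leq \eta_p-1$, provided one verifies the sign inside the absolute value -- which is the main (though minor) subtlety and follows because the construction of $A$, together with \Cref{prp:antichain-length} applied to the $(a_{j_p},a_{j_q})$-subpath of $P$ (which contains many consecutive elements of $\anticp{r}{P}$), ensures $L_q-L_p$ is much larger than $|\eta_p-\eta_q|\leq 2$. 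Consequently $\eta_1>\eta_2>\cdots>\eta_s=0$ is a strictly decreasing sequence in $\{0,1,\dots,\delta\}$, and therefore $s\leq \delta+1\leq 3$, proving the claim.
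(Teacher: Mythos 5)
Your proof is correct and rests on exactly the same two ingredients as the paper's: \Cref{clm:induced:1}\ref{clm:it:c} is used to bound the slack between $\dist{b_i}{a_i}$ and $|\dist{r}{b_i}-\dist{r}{a_i}|$ by $2$ (your $\delta\leq 2$), and \Cref{prp:antichain-length} is used to show that each additional antichain vertex on the isometric path $X_i$ costs one more unit of that slack. The only difference is presentational: the paper applies \Cref{prp:antichain-length} once to all of $X_i$ (four antichain vertices would force slack at least $3$, a contradiction), whereas your pairwise height-decrement argument unrolls that single application by hand and arrives at the same bound $s\leq\delta+1\leq 3$.
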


\begin{subproof}
See \Cref{fig:clm2.2}. Let $R$ denote the $(r,b'_i)$-subpath of $Q_i$. Due to \Cref{clm:induced:1}\ref{clm:it:c}, we know that there is an $(r,a_i)$-isometric path $S$ such that $V(R)\subseteq V(S)$. Hence, $b'_i\in V(S)$; let $S'$ denote the $(b'_i,a_i)$-subpath of $S$. Observe that appending $b_i$ to $S'$ gives a path from $b_i$ to $a_i$. Since $\dist{b'_i}{a_i} = |\dist{r}{a_i} - \dist{r}{b'_i}|$ and $|\dist{r}{b'_i} - \dist{r}{b_i}|\leq 1$, we have, $\dist{b_i}{a_i} \leq 2 + |\dist{r}{b_i} - \dist{r}{a_i}|$. If $|A \cap V(X_i)|\geq 4$, then \Cref{prp:antichain-length} implies  $\dist{b_i}{a_i} \geq 3 + |\dist{r}{b_i} - \dist{r}{a_i}|$, a contradiction. Hence, $|A \cap V(X_i)|\leq 3$.
\end{subproof}

\begin{claim}\label{clm:lb2}
   For $\{i,j\}\subseteq [t]$, let $R_{ij}$ denote the $(b_i,b_j)$-subpath of $P$. Then $|A \cap V(R_{ij})|\geq \mbound+2$.
\end{claim}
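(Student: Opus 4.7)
The plan is to compare $R_{ij}$ to the $(a_i,a_j)$-subpath $Y_{ij}$ of $P$ and use Claim~\ref{clm:lb1} to control their symmetric difference. By construction, $Y_{ij}$ contains the consecutive antichain vertices $p_{n(i)}, p_{n(i)+1}, \ldots, p_{n(j)}$ of $\anticp{r}{P}$, so $|A \cap V(Y_{ij})| = n(j)-n(i)+1 \ge 12K+5$, with the extreme case being $\{i,j\}=\{1,2\}$ (where $n(1)=1$ by the special definition $a_1=p_1$).

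First, I would observe that $b_i$ cannot lie on $P$ beyond $a_j$ (and symmetrically for $b_j$ past $a_i$): if $b_i$ were past $a_j$, then $X_i$ would contain all of $Y_{ij}$ as a subpath, and hence at least $12K+5$ antichain vertices of $\anticp{r}{P}$, contradicting Claim~\ref{clm:lb1}. Therefore, $b_i$ and $b_j$ lie within a bounded antichain-distance of $a_i$ and $a_j$ respectively along $P$.

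Next, I would perform a short case analysis on the sidedness of $b_i$ (respectively $b_j$) relative to $a_i$ (respectively $a_j$) on $P$. In every configuration, the subpaths of $P$ satisfy $V(Y_{ij})\subseteq V(X_i)\cup V(R_{ij})\cup V(X_j)$, with overlaps confined to the shared endpoints $b_i,b_j$. Intersecting with the antichain set $A$ and applying inclusion--exclusion yields
\[
|A\cap V(R_{ij})|\;\ge\;|A\cap V(Y_{ij})|-|A\cap V(X_i)|-|A\cap V(X_j)|+\varepsilon,
\]
where $\varepsilon\ge 0$ accounts for vertices of $A$ lying in more than one of $X_i,X_j,R_{ij}$. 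Combining the lower bound $|A\cap V(Y_{ij})|\ge 12K+5$ with $|A\cap V(X_i)|,|A\cap V(X_j)|\le 3$ from Claim~\ref{clm:lb1} then gives $|A\cap V(R_{ij})|\ge 12K+2$, as desired.

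The main (minor) obstacle is the careful bookkeeping in the worst configuration, where both $b_i$ and $b_j$ lie strictly between $a_i$ and $a_j$ on $P$; here the two subpaths $X_i$ and $X_j$ are subsumed in $Y_{ij}$ and one must avoid double-counting antichain vertices near the overlap points $b_i$ and $b_j$. The other three configurations (one or both of $b_i,b_j$ external to the $a_i\ldots a_j$ range) are easier, since there $R_{ij}$ covers all of $Y_{ij}$ on at least one side, giving an even larger antichain count.
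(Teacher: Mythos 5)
Your overall strategy---decompose the $(a_i,a_j)$-subpath $Y_{ij}$ into $X_i$, $R_{ij}$, $X_j$ and subtract the at most $3$ antichain vertices of each of $X_i$, $X_j$ via \Cref{clm:lb1}---is the same as the paper's, and your preliminary observation that $b_i$ cannot lie beyond $a_j$ (nor $b_j$ before $a_i$) is correct; it is exactly how one justifies that $b_j$ lies on $Y_{ij}$. The gap is in the final count. In the configuration you yourself single out as worst (both $b_i$ and $b_j$ strictly between $a_i$ and $a_j$), both $X_i$ and $X_j$ are contained in $Y_{ij}$, so your displayed inequality with $\varepsilon\ge 0$ yields only $|A\cap V(R_{ij})|\ge (12K+5)-3-3=12K-1$, not $12K+2$; even crediting the two overlap vertices $b_i,b_j$ once each gives at most $12K+1$. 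The missing $3$ cannot be recovered by bookkeeping, and it matters: \Cref{clm:3} needs precisely $12K+2$ so that \Cref{prp:antichain-length} yields $\dist{b_i}{b_j}\ge 12K+1+|\dist{r}{b_i}-\dist{r}{b_j}|$, contradicting the upper bound $12K+|\dist{r}{b_i}-\dist{r}{b_j}|$ derived there.

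The ingredient you are missing is directional. By the choice of $a''$ in \Cref{prc:pillar} (Line 6), $b_i$ always lies on the $(u,a_i)$-subpath of $P$, i.e., on the $u$-side of $a_i$ (and likewise $b_j$ on the $u$-side of $a_j$). Hence, for $i<j$, $b_i\in V(Y_{ij})$ forces $b_i=a_i$, so your worst configuration never occurs: at most one of $X_i,X_j$ (namely $X_j$) removes antichain vertices from $Y_{ij}$, the loss is at most $3$, and $12K+2$ follows. Without invoking this fact (or else increasing the additive constant $5$ to $8$ throughout the section), the argument does not close.
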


\begin{subproof}
    See \Cref{fig:clm2.3}. Without loss of generality, assume that $i<j$. Let $S$ denote the $(a_i,a_j)$-subpath of $P$. Observe that by the definition of $a_i$ and $a_j$, $|A \cap V(S)|\geq \mbound+\addbound$. Let $X_i$ and $X_j$ denote the $(a_i,b_i)$-subpath and $(a_j,b_j)$-subpath of $P$, respectively. Due to \Cref{clm:lb1}, we have that {$|A \cap V(X_i)|\leq 3$, $|A \cap V(X_j)|\leq 3$}. According to Procedure~\ref{prc:pillar}, $b_j$ always lies in $S$. Hence we have two cases depending on whether $b_i$ lies in $S$ or not. First, consider the case when $b_i\in V(S)$. In this case, $b_i=a_i$. Indeed, according to Procedure~\ref{prc:pillar}, $b_i$ lies on the $(u,a_i)$-subpath of $P$ and therefore $b_i$ lies in $S$ only if $b_i=a_i$.
    Therefore, $S$ can be constructed by concatenating $R_{ij}$ and $X_j$. Hence, we have the following.
    \begin{align*}
         |A \cap V(R_{ij})|+ |A \cap V(X_j)|  & \geq |A \cap V(S)| \\
        |A \cap V(R_{ij})|+3 &\geq  \mbound+\addbound  \\
        |A \cap V(R_{ij})| & \geq \mbound+2
    \end{align*}

    Consider the case $b_i\not\in V(S)$. Observe that $X_j$ is a subpath of $S$, and that the subpath $S'$ of $S$ obtained by deleting the vertices of $X_j$ is a subpath of $R_{ij}$. Hence, $|A \cap V(R_{ij})| \geq |A \cap V(S')| \geq |A \cap V(S)|-|A \cap V(X_j)| \geq 12K+5$. 

\end{subproof}

The above claim implies that the $(b_i,b_j)$-subpath of $P$, with $i\neq j$, necessarily contains an antichain of large size. For each $i\in [t]$, let $Q'_i$ denote the maximal subpath of $Q_i$ such that (i) $Q'_i$ has $b_i$ as an end-vertex and (ii) $\dist{Q'_i}{Q_j}\geq K$ for all $i\neq j, j\in [t]$.
Note that $Q'_i$ is well-defined, as for all $i,j\in [t]$, $\dist{b_i}{Q_j}\geq 6K$. (Otherwise, it can be shown, using similar arguments as the ones in \Cref{clm:3}, that $\dist{b_i}{b_j} \leq 12K$, contradicting \Cref{clm:lb2} and \Cref{prp:antichain-length}. We omit these details as \Cref{clm:3} already implies that $Q'_i$ is well-defined for each $i\in [t]$.)
Let $c_i$ be the end-vertex of $Q'_i$ which is distinct from $b_i$.  Now we prove a lower bound on the length of $Q'_i$, for each $i\in [t]$. We prove the following final claim before providing the explicit constructions of the branch paths and branch sets.

  \begin{claim}\label{clm:3}
      For each $i\in [t]$, the path $Q'_i$ has length at least $5K+1$. 
  \end{claim}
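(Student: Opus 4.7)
The plan is to argue by contradiction. Suppose $|Q'_i|\leq 5K$ for some $i\in[t]$. I first observe that $c_i\neq r$, since otherwise $r\in V(Q'_i)\cap V(Q_j)$ for any $j\neq i$, making $\dist{Q'_i}{Q_j}=0<K$ and violating the defining property of $Q'_i$. Hence $c_i$ has a neighbour $c'_i$ on $Q_i$ on the side towards $r$. By the maximality of $Q'_i$, extending $Q'_i$ by $c'_i$ must break the $K$-distance condition, so there exist $j\in[t]\setminus\{i\}$ and $d_j\in V(Q_j)$ with $\dist{c'_i}{d_j}\leq K-1$.

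The second step is to construct a short $(b_i,b_j)$-walk by concatenating three pieces: the $(b_i,c'_i)$-subpath of $Q_i$ (length $|Q'_i|+1$), an isometric $(c'_i,d_j)$-path (length at most $K-1$), and the $(d_j,b_j)$-subpath of $Q_j$ (length $\alpha_j$), giving $\dist{b_i}{b_j}\leq |Q'_i|+K+\alpha_j$. To bound $\alpha_j$, I rely on \Cref{clm:induced:1}\ref{clm:it:c}: the subpath $R_j$ of $Q_j$ ending at $b'_j$ is a subpath of an isometric path from $r$, so distances along $R_j$ from $r$ equal their $G$-distances. If $d_j=b_j$ then $\alpha_j=0$ and the walk has length at most $6K$, which already contradicts the lower bound below; otherwise $d_j\in V(R_j)$ and a direct computation yields $\alpha_j\leq \dist{r}{b_j}-\dist{r}{d_j}+2$. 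Using the triangle inequality and the fact that $R_i$ is likewise isometric from $r$, I then obtain $\dist{r}{d_j}\geq \dist{r}{c'_i}-(K-1)\geq \dist{r}{b_i}-|Q'_i|-K$, hence an upper bound of the shape $\dist{b_i}{b_j}\leq 2|Q'_i|+2K+O(1)+\bigl(\dist{r}{b_j}-\dist{r}{b_i}\bigr)$.

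The third step combines this upper bound with the lower bound $\dist{b_i}{b_j}\geq 12K+1+|\dist{r}{b_i}-\dist{r}{b_j}|$, obtained by applying \Cref{prp:antichain-length} to the isometric subpath $R_{ij}$ of $P$, whose antichain has size at least $12K+2$ by \Cref{clm:lb2}. Treating both signs of $\dist{r}{b_i}-\dist{r}{b_j}$ separately, the comparison forces $|Q'_i|\geq 5K+1$, contradicting the assumption. The main obstacle is keeping the constants sharp enough to land on exactly $5K+1$, which requires careful bookkeeping of the $\pm 1$ adjustments arising from whether Procedure~\ref{prc:pillar} returns at Line~4 or Line~8 (i.e., whether $b_i=b'_i$ or $b_i\neq b'_i$, and similarly for $j$), together with checking that the only potentially delicate subcase is $d_j\in V(R_j)$, since $d_j=b_j$ yields an even shorter walk.
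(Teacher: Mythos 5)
Your strategy is the same as the paper's: extend $Q'_i$ by the next vertex $c'_i$ of $Q_i$, invoke maximality of $Q'_i$ to obtain $j\neq i$ and $d_j\in V(Q_j)$ with $\dist{c'_i}{d_j}\leq K-1$, assemble a short $(b_i,b_j)$-walk, and play it against the lower bound $\dist{b_i}{b_j}\geq 12K+1+|\dist{r}{b_i}-\dist{r}{b_j}|$ that follows from \Cref{clm:lb2} and \Cref{prp:antichain-length}. The problem is that your bookkeeping, taken at face value, does not produce a contradiction. Summing your three legs gives
\[
\dist{b_i}{b_j}\;\leq\;\bigl(|E(Q'_i)|+1\bigr)+(K-1)+\bigl(\dist{r}{b_j}-\dist{r}{d_j}+2\bigr)\;\leq\;2|E(Q'_i)|+2K+2+\bigl(\dist{r}{b_j}-\dist{r}{b_i}\bigr),
\]
so under the hypothesis $|E(Q'_i)|\leq 5K$ you get $\dist{b_i}{b_j}\leq 12K+2+(\dist{r}{b_j}-\dist{r}{b_i})$. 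When $\dist{r}{b_j}<\dist{r}{b_i}$ this does contradict the lower bound, but in the case $\dist{r}{b_j}\geq\dist{r}{b_i}$ the two bounds read $12K+1+\Delta\leq \dist{b_i}{b_j}\leq 12K+2+\Delta$ and are perfectly compatible. Even if you instead solve the comparison for the length of $Q'_i$, you only obtain $2|E(Q'_i)|\geq 10K-1$, i.e.\ $|E(Q'_i)|\geq 5K$ — one short of the claimed $5K+1$.

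The two units of slack are not an artifact of loose writing: $\alpha_j=\dist{r}{b_j}-\dist{r}{d_j}+2$ and $\dist{r}{c'_i}=\dist{r}{b_i}-|E(Q'_i)|-1$ can both be attained (the first when $\dist{r}{b_j}=\dist{r}{b'_j}-1$, which is possible when Procedure~\ref{prc:pillar} returns at Line~8 for $j$; the second when $\dist{r}{b_i}=\dist{r}{b'_i}+1$, which always happens when it returns at Line~4 for $i$), and nothing prevents them from occurring simultaneously for distinct indices $i$ and $j$. So the "careful bookkeeping of the $\pm1$ adjustments" you defer to is precisely the missing content: you would need either a case analysis showing the worst cases for $i$ and $j$ cannot coincide, or a different routing of the descent. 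The paper takes the latter route, descending along $Q_j$ via the intermediate level vertices $d'_j$ and $f_j$ and charging the legs so that the total comes to $12K+|\dist{r}{b_j}-\dist{r}{b_i}|$, which does beat the lower bound. As written, your proposal establishes at best $|E(Q'_i)|\geq 5K$, not the stated $5K+1$, and in the case $\dist{r}{b_j}\geq\dist{r}{b_i}$ it establishes nothing.
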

\begin{subproof}
    See \Cref{fig:clm:3}. Assume for contradiction that there exists an integer $i\in [t]$ such that the length of $Q'_i$ is at most $5K$. Let $c'_i$ be the vertex adjacent to $c_i$ in $Q_i$ which is not a vertex of $Q'_i$. Let $j\neq i$ be the minimum index such that $c'_i$ has 
    distance strictly less than $K$ from some vertex $d_j\in V(Q_j)$. Let $T_{ij}$ denote a $(c'_i,d_j)$-isometric path in $G$. Let $d'_j$ be the vertex of $Q_j$ such that $\dist{r}{d'_j} = \max \{\dist{r}{c'_i}, \dist{r}{d_j}\}$. In other words, $d'_j$ is the vertex of $Q_j$ which is either $d_j$ or lies at the same distance from $r$ as $c'_i$.
    Let $f_j$ be the vertex of $Q_j$ such that $\dist{r}{f_j} = \max \{\dist{r}{b_i}, \dist{r}{b_j}\}$. In other words, $f_j$ is the vertex of $Q_j$ which is either $b_j$ or lies at the same distance from $r$ as $b_i$. Observe that $$ \dist{b_i}{b_j} \leq \dist{b_i}{c'_i} + \dist{c'_i}{d_j} + \dist{d_j}{d'_j} + \dist{d'_j}{f_j} + \dist{f_j}{b_j}.$$ 
    Recall 
    that $\dist{b_i}{c'_i}\leq 5K+1$ and $\dist{c'_i}{d_j}\leq K-1$. By the definition of $d'_j$ and $f_j$, these imply that $\dist{d'_j}{f_j} \leq 5K+1$ and $\dist{d_j}{d'_j}\leq K-1$. Finally, the distance between $f_j$
    and $b_j$ is $|\dist{r}{b_j}-\dist{r}{f_j}|$. 
    Now we have that 
    $$\dist{b_i}{b_j} \leq  \mbound + |\dist{r}{b_j}-\dist{r}{b_i}|.$$
   Let $ R_{ij}$ denote the $(b_i,b_j)$-subpath of $P$. From \Cref{clm:lb2} it follows that $|A \cap V(R_{ij})|\geq \mbound+2$. Now from \Cref{prp:antichain-length}, it follows that 
   $$\dist{b_i}{b_j} \geq \mbound+1+|\dist{r}{b_j}-\dist{r}{b_i}|$$ which is a contradiction.
 \end{subproof}

The following claim implies that if the subpath between some vertex $c\in V(P)$ and $b_i$, for some $i\in [t]$, contain many antichain vertices, then, $c$ must be far away from $Q'_i$.

\begin{claim}\label{clm:induced:2} 
    For an integer $i\in [t]$, let $c\in V(P)$ be a vertex such that the $(c,b_i)$-subpath of $P$ contains at least $2K+1$ antichain vertices of $A$. Then, for any vertex $d\in V(Q_i)$, $\dist{c}{d} \geq K$.
\end{claim}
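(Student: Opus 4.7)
The plan is to argue by contradiction: suppose some vertex $d \in V(Q_i)$ satisfies $\dist{c}{d} \leq K-1$, and then build a short walk from $c$ to $b_i$ whose length clashes with the lower bound forced by the large antichain on the $(c, b_i)$-subpath of $P$.

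For the lower bound, I would apply \Cref{prp:antichain-length} directly to the $(c, b_i)$-subpath of $P$, which is isometric as a subpath of the isometric path $P$ and contains at least $2K+1$ antichain vertices, yielding
\[
\dist{c}{b_i} \;\geq\; |\dist{r}{c} - \dist{r}{b_i}| + 2K.
\]
For the upper bound, the sub-case $d = b_i$ is immediate since $\dist{c}{b_i} = \dist{c}{d} \leq K-1 < 2K$ already contradicts the display. Otherwise $d \in V(Q_i - b_i)$, and by \Cref{clm:induced:1}\ref{clm:it:c} the path $Q_i - b_i$ is a subpath of an isometric $(r, a_i)$-path and is therefore itself isometric. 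Consequently $\dist{r}{d}$ equals the position of $d$ along $Q_i - b_i$, and the triangle inequality from $r$ gives $\dist{r}{d} \geq \dist{r}{c} - (K-1)$. Concatenating a $(c, d)$-geodesic, the $(d, b'_i)$-subpath of $Q_i - b_i$, and the edge $b'_i b_i$ then yields
\[
\dist{c}{b_i} \;\leq\; (K-1) + (\dist{r}{b'_i} - \dist{r}{d}) + 1 \;\leq\; 2K - 1 + \dist{r}{b'_i} - \dist{r}{c}.
\]
A case split on the three possible values of $\dist{r}{b'_i} - \dist{r}{b_i} \in \{-1, 0, 1\}$ and on the sign of $\dist{r}{c} - \dist{r}{b_i}$, in the style of the proof of \Cref{clm:lb1}, is then meant to clash with the antichain lower bound in each configuration.

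The main obstacle I foresee is the borderline sub-case $\dist{r}{b'_i} = \dist{r}{b_i} + 1$ together with $\dist{r}{c} \leq \dist{r}{b_i}$, where the straight concatenation above only matches, and does not strictly beat, the antichain lower bound. To squeeze out the missing unit, I would reroute the walk through an auxiliary vertex $f \in V(Q_i - b_i)$ with $\dist{r}{f} = \dist{r}{c}$, in the spirit of the witness vertex $f_j$ used in the proof of \Cref{clm:3}, and exploit \Cref{clm:induced:1}(b) (no internal vertex of $Q_i$ is adjacent to a vertex of $P$): the latter forbids certain one-edge shortcuts between $c \in V(P)$ and the interior of $Q_i$, forcing the combined walk to spend at least one extra edge, which is exactly the slack needed to obtain the strict contradiction.
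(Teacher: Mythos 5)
Your setup and main computation follow the same route as the paper's proof: assume $\dist{c}{d}\le K-1$, obtain $\dist{c}{b_i}\ge 2K+\left|\dist{r}{c}-\dist{r}{b_i}\right|$ from \Cref{prp:antichain-length} applied to the $(c,b_i)$-subpath of $P$, and try to beat this with the walk $c\to d\to b'_i\to b_i$, using that the $(r,b'_i)$-part of $Q_i$ is a subpath of an $(r,a_i)$-geodesic. Your bound $\dist{c}{b_i}\le 2K-1+\dist{r}{b'_i}-\dist{r}{c}$ is correct, and you are also right that it gives a strict contradiction in every configuration except $\dist{r}{b'_i}=\dist{r}{b_i}+1$ with $\dist{r}{c}\le\dist{r}{b_i}$, where the two bounds merely coincide. (The paper's write-up runs essentially the same chain through an intermediate vertex $z$ with $\dist{r}{z}=\dist{r}{c}$ and ends with an equality $2K+1+\dist{r}{b'_i}-\dist{r}{z}=2K+\left|\dist{r}{b_i}-\dist{r}{c}\right|$ that tacitly assumes $\dist{r}{b_i}=\dist{r}{b'_i}+1$; it elides exactly the case you isolate.)

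The gap is in your proposed repair of that corner case. \Cref{clm:induced:1}(b) only forbids edges between $V(P)$ and $V(Q_i)\setminus\set{b_i,b'_i}$; it constrains neither the interior of the $(c,d)$-geodesic, which for $K\ge 5$ has length $K-1\ge 4$ and whose internal vertices need not lie on $P$ or $Q_i$ at all, nor the edge $b'_ib_i$, which is explicitly exempted. In the tight configuration the concatenation of a $(c,d)$-geodesic, the $(d,b'_i)$-segment of $Q_i$, and the edge $b'_ib_i$ is a genuine $(c,b_i)$-walk of length exactly $2K+\dist{r}{b_i}-\dist{r}{c}$ that uses no forbidden adjacency, so nothing forces the "one extra edge" you invoke; and rerouting through a level-matched vertex $f$ with $\dist{r}{f}=\dist{r}{c}$ does not change the walk's length, since $f$ already lies on the $(d,b'_i)$-segment. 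To close the argument you need a different source of slack, e.g.\ strengthening the hypothesis to $2K+2$ antichain vertices, or noting that every invocation of the claim in the paper supplies at least $5K+2$ antichain vertices, which raises the lower bound to $5K+1+\left|\dist{r}{c}-\dist{r}{b_i}\right|$ and makes the corner case harmless. As written, the last step of your proposal is a conjecture rather than a proof.
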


\begin{subproof}
See \Cref{fig:clm:induced:2}. Suppose by contradiction that $\dist{c}{d} < K$ for some $d\in V(Q_i)$. Due to \Cref{prp:antichain-length}, $\dist{c}{b_i}\geq 2K + |\dist{r}{b_i} - \dist{r}{c}|$. Observe that $\dist{d}{b_i}\leq K$ would imply $\dist{c}{b_i} < 2K$, contradicting the above. Hence, there must exist a vertex $z\in V(Q_i)$, such that $\dist{r}{z}=\dist{r}{c}$ and $\dist{d}{z} < K$.
Hence, $\dist{b'_i}{c}\leq  \dist{b'_i}{z} + \dist{z}{d} + \dist{c}{d} < 2K + \dist{b'_i}{z} \leq  2K+\dist{r}{b'_i} - \dist{r}{z}$. Since $b'_i$ and $b_i$ are adjacent, $|\dist{r}{b_i} - \dist{r}{b'_i}|\leq 1$. Hence, $\dist{b_i}{c} \leq \dist{b'_i}{c}+1 < 2K+1+\dist{r}{b'_i} - \dist{r}{z} = 2K+|\dist{r}{b_i} - \dist{r}{c}| $, a contradiction.
\end{subproof}

\begin{figure}
    \centering
    \begin{subfigure}[b]{\textwidth}
    \centering
    \begin{tikzpicture}
			
			\draw[thick] (9.5,0) -- (0,0);
			\foreach \x/\y/\w/\z [count = \n] in
			{9/0/9/4, 7.5/0/7.5/4, 6/0/6/4, 3.5/0/3.5/4, 2/0/2/4, 0.5/0/0.5/4 }
			{
				\path [draw=black, thick,snake it] (\x,\y) -- (\w,\z);
				 \path [draw=black] (\w,\z) -- (4.5,6);
			}
			
			\foreach \x/\y [count = \n] in
			{9/0, 7.5/0, 6/0, 3.5/0, 2/0, 0.5/0 }
			{
				 \filldraw (\x, \y) circle (2pt);
			}

			\node at (4.75, 3) {\begin{tabular}{c}
					\ldots
			\end{tabular}};
			
			\definecolor{lightgray}{rgb}{0.83, 0.83, 0.83}
			
			\draw[rounded corners, fill=lightgray,opacity=0.25] (0, 3.8) -- (9.5, 3.8) -- (4.5,6.5) -- cycle;
			
			\draw[rounded corners, fill=lightgray,opacity=0.25] (0.5, 0.5) -- (1,-0.5) -- (0,-0.5) -- cycle;
			\draw[rounded corners, fill=lightgray,opacity=0.25] (2, 0.5) -- (2.5,-0.5) -- (1.5,-0.5) -- cycle;
			\draw[rounded corners, fill=lightgray,opacity=0.25] (3.5, 0.5) -- (4,-0.5) -- (3,-0.5) -- cycle;
			
			\draw[rounded corners, fill=lightgray,opacity=0.25] (6, 0.5) -- (6.5,-0.5) -- (5.5,-0.5) -- cycle;
			\draw[rounded corners, fill=lightgray,opacity=0.25] (7.5, 0.5) -- (8,-0.5) -- (7,-0.5) -- cycle;
			\draw[rounded corners, fill=lightgray,opacity=0.25] (9, 0.5) -- (9.5,-0.5) -- (8.5,-0.5) -- cycle;
					
			\node[left] at (4.5, 4.5) {$T$};
			\node[above] at (4.5, 6) {$r$};
			
			\foreach \x/\y [count = \n] in
			{ 0.5/2 , 2/2, 3.5/2   }
			{
				\node[left] at (\x, \y) {$Q'_{\n}$};
				\node[below] at (\x,2-\y) {$a_{\n}$};
				\node[below] at (\x,1.5-\y) {$Z_{\n}$};
			}
			
			\foreach \x/\y [count = \n] in
			{ 6/2, 7.5/2  }
			{
				\node[left] at (\x, \y) {$Q'_{t-\n}$};
					\node[below] at (\x,2-\y) {$a_{t-\n}$};
					\node[below] at (\x,1.5-\y) {$Z_{t-\n}$};
			}
			
			\foreach \x/\y [count = \n] in
			{ 9/2  }
			{
				\node[left] at (\x, \y) {$Q_{t}$};
				\node[below] at (\x,2-\y) {$a_{t}$};
				\node[below] at (\x,1.5-\y) {$Z_{t}$};
			}
			
			\node[below] at (5, 0) {$P$}; \node[left] at (0,0) {$u$}; \node[right] at (9.75,0) {$v$}; \node[above] at (4.5, 6) {$r$};  \filldraw (4.5, 6) circle (2pt);

	\end{tikzpicture} 
    \subcaption{The shaded triangular shapes indicate the branch sets $Z_1,Z_2,\ldots, Z_t$. The paths $Q'_1,Q'_2,\ldots,Q'_t$ and some and subpaths of $P$ (not indicated in this figure) are used as branch paths to construct a $K$-fat minor model of $U_t$.}
    \end{subfigure}
    
    \begin{subfigure}[b]{\textwidth}
    \centering
            \begin{tikzpicture}
                \draw[ultra thick, gray]   (0,0.75) -- (-0.5,0);
                \draw[ultra thick, gray]   (-1,0) -- (1,0);
                \filldraw[gray] (-1,-0.15) rectangle (-2,0.15);
                \filldraw[gray] (1,-0.15) rectangle (2,0.15);
                \foreach \x/\y [count = \n] in	{0/4, 0/0, -4/0, -3/0, 4/0, 0/0.75, -0.5/0, 3/0, 1/0, -1/0}
			{
				 \filldraw (\x, \y) circle (1.5pt);
			}

                \draw[thick]   (-4,0) -- (-1,0);
                \draw[thick]   (4,0) -- (1,0);
                
                \draw[thick, densely dotted] (0,4) -- (0,0); 
                \draw[thick, densely dotted] (0,4) -- (-3,0);
                \draw[thick, densely dotted] (0,4) -- (3,0);

                \node[above] at (0,4) {$r$};
                \node[below] at (0,0) {$a_i$};
                \node[below] at (-0.5,0) {$b_i$};
                \node[left] at (0,0.75) {$b'_i$};
                \node[below] at (-1.5,-0.2) {$D_{i-1}$};
                \node[below] at (1.5,-0.2) {$D_i$};
                \node[below] at (-3,0) {$b_{i-1}$};
                \node[below] at (3,0) {$b_{i+1}$};
            \end{tikzpicture}
            
        \subcaption{Construction of $Z_i$.  Vertices connected by the thick gray lines indicate $Z_i$. The dotted lines indicate $Q_i$. }
    \end{subfigure}
    \caption{Construction of branch sets and branch paths.}\label{fig:model-construct}
    
\end{figure}

We are now ready to construct the $K$-fat minor model of $U_t$, in order to provide a contradiction.

\medskip \noindent \textbf{Constructing the branch paths and branch sets.} \Cref{fig:model-construct} illustrates (on a high level) the construction of branch paths and branch sets of the $K$-fat minor model of $U_t$. Let $\mathcal{Q}=\{Q'_i\colon i\in [t]\}$. Let $R_i$ be the $(b_{i},b_{i+1})$-subpath of $P$ and let $D_i$ denote the maximum subpath of $R_i$ such that both components of the graph $E(R_i)\setminus E(D_i)$ contain $5K+2$ antichain vertices of $A$. Observe that $D_i$ always exists (due to \Cref{clm:lb2}). Let $\mathcal{D}=\{D_i\colon i\in [t-1]\} \}$.

Recall that, for $i\in [t]$, $c_i$ denotes the end-vertex of $Q'_i$ that is closest to $r$, and let $T_i$ denote the $(r,c_i)$-subpath of $Q_i$. Let $T$ denote the set $\displaystyle\bigcup\limits_{i\in [t]} V(T_i)$.

Clearly, the subgraph induced by $T$ is connected and $T$ will be the branch set corresponding to the universal vertex of $U_t$. Below, we construct the branch sets corresponding to the other vertices of $U_t$. We shall use the paths in $\mathcal{D}$ (defined above) to construct these branch sets.

 For each $i\in [t]$, let $X_i$ denote the $(a_i,b_i)$-subpath of $P$ and  $P_i=V(X_i)\cup \{b'_i\}$. Observe that the subgraph induced by $P_i$ is connected. 
 
 For $i\in [t-1]$, let $R_{i}$ denote the $(b_i,b_{i+1})$-subpath of $P$. Let $Y_1$ be the vertices of the connected component of $R_1 \setminus D_1$ that contains $b_1$. For $i\in [2,t-1]$, let $Y_i$ be the union of the vertices of the connected components of $R_{i-1}\setminus D_{i-1}$ and $R_{i+1}\setminus D_{i+1}$ that contain $b_i$. Finally, let $Y_t$ be the vertices of the connected component of $R_{t-1}\setminus D_{t-1}$ that contains $b_t$. For $i\in [t]$, let $Z_i=Y_i\cup P_i$. Note that the subgraph induced by $Z_i$ is connected and $Z_i\subseteq V(Q'_i)\cup V(P)$. To complete the proof, we need to show the following:

 \begin{enumerate}
     \item\label{it:p1} for each $i\in [t]$, $T\cap Q'_i\neq \emptyset$;
    \item\label{it:p2} $D_1\cap Z_1\neq \emptyset, D_{t-1}\cap Z_t\neq \emptyset$; and
    \item\label{it:p3} for $i\in [2,t-1], j\in [t-1], j-i\in \{0,1\},$  $Z_i\cap D_j\neq \emptyset$.
    
    \item\label{it:p4} for any two paths $X,Y\in \mathcal{Q}\cup \mathcal{D}$, $\dist{X}{Y}\geq K$.
    \item\label{it:p5} for each $i,j\in [t]$ with $i\neq j$, $\dist{Z_i}{Q'_j}\geq K$,
    \item\label{it:p6} for any $i\in [t]$, $\dist{T}{Z_i}\geq K$.
    \item\label{it:p7}  for any $i,j\in [t]$ with $i<j$, $\dist{Z_i}{Z_j}\geq K$,
    \item\label{it:p8}   for any $i\in [t-1]$, $\dist{T}{D_i} \geq K$, and
    \item\label{it:p9} for any $i\in [t-1], j\in [t]$ with $|j-i|\notin \{0,1\}$,
    we have $\dist{D_i}{Z_j} \geq K$.
 \end{enumerate}

\Cref{it:p1,it:p2,it:p3} follow from definitions. The next claim proves that the branch paths are far apart from each other (\Cref{it:p4}).

\begin{claim}\label{clm:far-away-0}
    For any two paths $X,Y\in \mathcal{Q}\cup \mathcal{D}$, $\dist{X}{Y}\geq K$.
\end{claim}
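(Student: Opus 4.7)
The plan is to prove this by a case analysis on the types of $X$ and $Y$: both lie in $\mathcal{Q}$, both lie in $\mathcal{D}$, or one of each. The first case is immediate from the definition of $\mathcal{Q}$, the second uses that subpaths of the isometric path $P$ are themselves isometric together with \Cref{prp:antichain-length}, and the third reduces to an application of \Cref{clm:induced:2}.

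First, suppose $X = Q'_i$ and $Y = Q'_j$ with $i \neq j$. By construction, $Q'_i$ was chosen to be the maximal subpath of $Q_i$ with endpoint $b_i$ satisfying $\dist{Q'_i}{Q_k} \geq K$ for every $k \neq i$. Since $V(Q'_j) \subseteq V(Q_j)$, this immediately yields $\dist{Q'_i}{Q'_j} \geq K$.

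Next, suppose $X = D_i$ and $Y = D_j$ with $i < j$. Both paths lie on $P$, so the subpath of $P$ between the inner endpoints of $D_i$ and $D_j$ is itself isometric, hence its length equals the corresponding graph distance. This subpath contains, in particular, the component of $R_i \setminus D_i$ lying on the side of $D_j$, which by the construction of $D_i$ carries at least $5K+2$ antichain vertices of $A$. Applying \Cref{prp:antichain-length} to this isometric subpath shows that its length is at least $5K+1 \geq K$, so $\dist{D_i}{D_j} \geq K$.

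Finally, the mixed case $X = Q'_i$ and $Y = D_j$ is the most delicate. For each $c \in V(D_j)$, the plan is to exhibit a component of some $R_k \setminus D_k$ that lies strictly inside the $(c, b_i)$-subpath of $P$, ensuring that this subpath contains at least $5K+2 \geq 2K+1$ antichain vertices of $A$; \Cref{clm:induced:2} then gives $\dist{c}{d} \geq K$ for every $d \in V(Q_i)$, and in particular for every $d \in V(Q'_i)$. The right choice of component depends on the relative order of $i$ and $j$ on $P$: if $j \geq i$, I would take the component of $R_j \setminus D_j$ (or of $R_i \setminus D_i$ when $j = i$) lying on the $b_i$-side of $D_j$; if $j < i$, I would take the component of $R_j \setminus D_j$ lying on the $b_{j+1}$-side. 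In every subcase, \Cref{clm:induced:2} applies uniformly to every $c \in V(D_j)$, yielding $\dist{Q'_i}{D_j} \geq K$.

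The main obstacle is the positional bookkeeping in the mixed case: one must verify, irrespective of the relative position of $D_j$ and $b_i$ on $P$, that a component of some $R_k \setminus D_k$ carrying $5K+2$ antichain vertices always sits between $c$ and $b_i$ along $P$. Once these relative positions are carefully laid out, every subcase collapses to a direct invocation of \Cref{clm:induced:2}.
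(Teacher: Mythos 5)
Your proposal is correct and follows essentially the same three-case decomposition as the paper: the $\mathcal{Q}$–$\mathcal{Q}$ case by the definition of $Q'_i$, the $\mathcal{D}$–$\mathcal{D}$ case via \Cref{prp:antichain-length} applied along $P$, and the mixed case via \Cref{clm:induced:2} after locating a component of some $R_k\setminus D_k$ with at least $5K+2\geq 2K+1$ antichain vertices inside the $(c,b_i)$-subpath. Your positional bookkeeping in the mixed case (splitting on $j\geq i$ versus $j<i$) is exactly the detail the paper leaves implicit, and it checks out.
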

\begin{subproof}
If $X,Y\in \mathcal{Q}$, then we are done by the definition of $\mathcal{Q}$. Suppose $X$ is $Q'_i$ for some $i\in [t]$ and $Y=D_j$ for some $j\in [t-1]$. Since any vertex $y\in V(Y)$ is a vertex of $P$, the $(b_i,y)$-subpath of $P$, say $P'$, contains at least {$5K+2$} antichain vertices of $A$. Now, \Cref{clm:induced:2} implies the current claim. 

Finally, assume that $X=D_i$ for some $i\in [t-1]$. By definition, both components found after deleting $X$ from the $(b_i,b_{i+1})$-subpath have $5K+2$ antichain vertices of $A$, i.e., have length at least $5K+3$. So, if both $X,Y$ are in $\mathcal{D}$, they are part of $P$, and thus their distance is at least $5K+3\geq K$. Hence, also in this case the claim holds. 
\end{subproof}

The following claim proves \Cref{it:p5} (in a slightly stronger sense).

\begin{claim}\label{clm:strong}
    For each $i,j\in [t]$ with $i\neq j$, $\dist{Z_i}{Q_j}\geq K$.
\end{claim}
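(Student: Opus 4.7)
The plan is to split on which part of $Z_i$ the vertex $c$ lies in, and combine Claim~\ref{clm:3} with Claim~\ref{clm:induced:2}. Observe that $Z_i \subseteq V(P) \cup \{b'_i\}$, so for the bound $\dist{Z_i}{Q_j}\geq K$ it suffices to fix $c \in Z_i$, $d \in V(Q_j)$ and show $\dist{c}{d}\geq K$; there are essentially two cases, depending on whether $c$ equals $b'_i$ or lies in $V(P) \cap Z_i = X_i \cup Y_i$.

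If $c = b'_i$, then since Claim~\ref{clm:3} guarantees that $Q'_i$ has length at least $5K+1$, the path $Q'_i$ contains the edge $b_i b'_i$, so $b'_i \in V(Q'_i)$. The defining property $\dist{Q'_i}{Q_j}\geq K$ (which holds for all $j\neq i$) then immediately gives $\dist{b'_i}{Q_j}\geq K$.

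If $c \in V(P)$, the plan is to invoke Claim~\ref{clm:induced:2} at index $j$: it suffices to show that the $(c, b_j)$-subpath of $P$ contains at least $2K+1$ antichain vertices of $A$. Claim~\ref{clm:lb2} guarantees that $R_{ij}$, the $(b_i, b_j)$-subpath of $P$, already contains at least $12K+2$ antichain vertices of $A$. The key step is bounding the number of these antichain vertices that lie on the $(b_i, c)$-subpath of $P$ by $5K+2$: if $c \in X_i$ this follows from Claim~\ref{clm:lb1} with the even better bound $3$; if $c \in Y_i$, then $c$ lies in a connected component $C$ of $R_{i-1}\setminus E(D_{i-1})$ or $R_i \setminus E(D_i)$ containing $b_i$, and the maximality of $D_{i-1}$ (respectively $D_i$) forces $|A \cap V(C)| \leq 5K+2$: if $C$ contained strictly more antichain vertices of $A$, one could extend $D$ by one further edge toward $b_i$ while still leaving both sides of the split with at least $5K+2$ antichain vertices, contradicting maximality.

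To conclude, consider the position of $c$ relative to $b_i$ and $b_j$ on $P$. If $c$ lies on the opposite side of $b_i$ from $b_j$, then the $(c, b_j)$-subpath contains all of $R_{ij}$, hence at least $12K+2$ antichain vertices of $A$; otherwise $c$ lies between $b_i$ and $b_j$ on $P$, and the $(c, b_j)$-subpath contains at least $12K+2 - (5K+2) = 7K$ antichain vertices of $A$. Both bounds exceed $2K+1$, so Claim~\ref{clm:induced:2} yields $\dist{c}{d}\geq K$ for every $d \in V(Q_j)$. The main obstacle I anticipate is the bound $|A\cap V(C)|\le 5K+2$ on the component $C$ containing $b_i$, which requires carefully unpacking the maximality of $D_{i-1}$ and $D_i$; everything else is routine case analysis.
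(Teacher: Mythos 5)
Your proof is correct and follows essentially the same route as the paper: you exclude the vertices $b_i,b'_i$ of $Z_i$ via the defining separation property of $Q'_i$ (using \Cref{clm:3} to guarantee $b'_i\in V(Q'_i)$), and you handle the remaining vertices of $Z_i\cap V(P)$ by showing that the $(c,b_j)$-subpath of $P$ carries enough antichain vertices of $A$ to invoke \Cref{clm:induced:2}. The only difference is that you spell out the antichain count (via \Cref{clm:lb1} and the maximality of the $D$-paths), a step the paper merely asserts, so your write-up is, if anything, more complete.
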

\begin{proof}
    Suppose that there exist $i,j\in [t]$ with $i\neq j$, and vertices $x\in V(Q'_j), y\in Z_i$ such that $\dist{x}{y}<K$. By definition of $Q'_j$, all its vertices are at distance at least $K$ from $Q_i$, so $y$ cannot be in $Q'_i$, which includes $b_i$ and $b'_i$ by \Cref{clm:3}. Hence, $y\in (V(P)\cap Z_i)\setminus\{b_i\}$. Observe that, the $(y,b_j)$-subpath of $P$ contains at least {$5K+2$} antichain vertices of $A$, which is a contradiction due to \Cref{clm:induced:2}.
\end{proof}

\medskip Now we prove \Cref{it:p6}.

\begin{claim}\label{clm:far-away-1}
For any $i\in [t]$, $\dist{T}{Z_i}\geq K$.
\end{claim}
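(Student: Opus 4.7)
The plan is to reduce, via \Cref{clm:strong}, to showing $\dist{T_i}{Z_i} \geq K$, and then to handle this remaining case by combining the isometric-path structure of $Q_i$ (in particular \Cref{clm:induced:1}\ref{clm:it:c}) with \Cref{clm:induced:2}.

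Fix $x \in T$ and $y \in Z_i$. Writing $T = \bigcup_{j \in [t]} V(T_j)$ with each $T_j \subseteq V(Q_j)$, if $x \in T_j$ for some $j \neq i$ then \Cref{clm:strong} immediately gives $\dist{x}{y} \geq \dist{Z_i}{Q_j} \geq K$. Hence it suffices to assume $x \in T_i$, and to split according to whether $y = b'_i$ or $y \in V(P)$ (recall that $Z_i \subseteq V(P) \cup \{b'_i\}$).

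If $y = b'_i$, \Cref{clm:induced:1}\ref{clm:it:c} says that the $(r,b'_i)$-subpath of $Q_i$ is itself an isometric path; since $T_i$ is an initial segment of this isometric path with length $\ell_T$ (so $\dist{r}{x}\leq \ell_T$) and $Q'_i$ has length $\ell_Q \geq 5K+1$ by \Cref{clm:3}, the triangle inequality gives $\dist{x}{b'_i} \geq \ell_T + \ell_Q - 1 - \dist{r}{x} \geq \ell_Q - 1 \geq 5K$. If instead $y \in V(P)$, I partition according to the number $k$ of antichain vertices of $A$ lying on the $(y,b_i)$-subpath of $P$. When $k \geq 2K+1$, \Cref{clm:induced:2} applied with index $i$ immediately gives $\dist{y}{d} \geq K$ for every $d \in V(Q_i) \supseteq T_i$, in particular for $d = x$. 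When $k \leq 2K$, I set $L = \dist{y}{b_i}$ (which equals the length of the $(y,b_i)$-subpath of $P$, since $P$ is isometric); combining the bound $\dist{x}{b_i} \geq \dist{x}{b'_i} - 1 \geq 5K-1$ from the $y=b'_i$ analysis with the triangle inequality yields $\dist{x}{y} \geq 5K - 1 - L$, so it remains to show that $L \leq 4K-1$ using the structural constraint that $y$ lies either in $X_i$ or in the component of $R_{i-1}\setminus D_{i-1}$ or $R_i\setminus D_i$ containing $b_i$, together with \Cref{prp:antichain-length}.

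The main obstacle is precisely this last sub-case, in which the $(y,b_i)$-subpath of $P$ has at most $2K$ antichain vertices of $A$ and \Cref{clm:induced:2} does not apply directly. Bounding $L$ there will require a careful accounting of how antichain vertices of $A$ are distributed inside the relevant component of $R_{i\pm 1}\setminus D_{i\pm 1}$ and of the behaviour of the height function $\dist{r}{\cdot}$ along $P$ between $y$ and $b_i$; \Cref{prp:antichain-length}, applied to the isometric $(y,b_i)$-subpath of $P$, will be the key tool for converting ``few antichain vertices of $A$'' into ``small $L$''.
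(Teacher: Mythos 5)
There is a genuine gap, and it sits exactly where you flag the "main obstacle." Your reduction to $x\in T_i$ via \Cref{clm:strong} matches the paper, and your treatments of $y=b'_i$ and of $y\in V(P)$ with at least $2K+1$ antichain vertices on the $(y,b_i)$-subpath are fine. But the remaining sub-case cannot be closed by the route you propose. You need $L=\dist{y}{b_i}\leq 4K-1$, and you intend to extract this from "few antichain vertices of $A$ between $y$ and $b_i$" via \Cref{prp:antichain-length}. That proposition only gives \emph{lower} bounds on path length (length $\geq$ height difference plus antichain size minus one); it can never convert a small antichain count into a small length. Indeed $L$ can genuinely exceed $4K-1$: for instance $X_i$, the $(a_i,b_i)$-subpath of $P$, contains at most $3$ vertices of $A$ by \Cref{clm:lb1}, yet its length is roughly $\dist{r}{a_i}-\dist{r}{b'_i}$, which is not bounded by any function of $K$ (a subpath of $P$ descending monotonically in the BFS layering from $r$ contributes no antichain pairs but can be arbitrarily long). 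So the inequality you defer is false in general, and the triangle inequality through $b_i$ alone cannot establish the claim.

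The paper's proof uses an ingredient you never invoke: the \emph{maximality} of $Q'_i$. Since $Q'_i$ is the longest suffix of $Q_i$ ending at $b_i$ that stays at distance at least $K$ from every other $Q_j$, the vertex $d'_i$ of $Q_i$ just above $c_i$ is within distance $K$ of some $Q_k$ with $k\neq i$. Starting from the assumed inequality $\dist{x}{y}<K$ with $x\in T_i$ (so $\dist{r}{x}\leq\dist{r}{c_i}$), a chain of triangle inequalities along $Q_i$ and $Q_k$ — carried out separately for $\dist{r}{y}\leq\dist{r}{d'_i}$ and $\dist{r}{y}>\dist{r}{d'_i}$, and carefully tracking the height terms $\dist{r}{\cdot}$ so that they cancel — yields $\dist{b_k}{y}\leq 5K+|\dist{r}{b_k}-\dist{r}{y}|$. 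This contradicts \Cref{prp:antichain-length} applied to the $(b_k,y)$-subpath of $P$, which contains a full component flanking $D_k$ and hence at least $5K+2$ antichain vertices of $A$. Note that the contradiction is obtained against $b_k$ for $k\neq i$, not against $b_i$: it is precisely because $D_k$ separates $b_k$ from $Z_i$ along $P$ that a guaranteed large antichain lies between them, which is the leverage your $b_i$-based argument lacks.
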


\begin{subproof}
Suppose that there exists an integer $i\in [t]$, such that $\dist{x}{y}<K$ with $x\in T$ and $y\in Z_i$. The definition of $T$ implies that $x\in V(Q_j)$ for some $j\in [t]$. By \Cref{clm:strong}, it remains to consider the case when $i=j$. Recall that $c_i$ denotes the end-vertex of $Q'_i$ that is different from $b_i$. From the definition of $T$ (and $T_i$), it follows that $\dist{r}{x}\leq \dist{r}{c_i}$.

Let $d'_i$ denote the vertex of $Q_i$ which is adjacent to $c_i$ but does not lie in $Q'_i$. The definition of $Q'_i$ implies that there exists an integer $k\in [t]\setminus \{i\}$, such that $\dist{d'_i}{Q_k} < K$.
Let $d'_k$ denote a vertex of $Q_k$ with $\dist{d'_i}{d'_k}<K$.

\begin{figure}[t]
    \centering
    \begin{tabular}{cc}
        \begin{tikzpicture}
            \foreach \x/\y [count = \n] in {0/0,2/-4,-2/-4, 0/-1.55, 1.85/-2.25, -1.85/-2.25, 1.45/-1.55, 0.75/-0.75, -1.45/-1.55 }
			{
				\filldraw (\x, \y) circle (1.5pt);
			}
            
            \node[right] at (1.85,-2.25) {$d'_i$};
            \node[left] at (-1.45,-1.55) {$d'_k$};
            \node[above] at (0,-1.5) {$y$};
            \node[right] at (1.5,-1.5) {$y'$};
            \node[above] at (0.75,-0.75) {$x$};
            \node[left] at (-1.85,-2.25) {$z$};
            \node[left] at (-2,-4) {$b'_k$};
            \node[right] at (2,-4) {$b_i$};
            \node [above] at (0,0) {$r$};

            \draw (2,-4) ..controls (2, -2).. (0,0);
            \draw (-2,-4) ..controls (-2, -2).. (0,0);

            \draw[rounded corners] (2,-4) -- (0,-1.5) -- (-1,-4) -- (-2,-4);
            \foreach \x/\y/\w/\z [count=\n] in {0/-1.55/0.75/-0.75}
            {
                \draw[dashed] (\x,\y) -- (\w,\z);
            }
            \foreach \x/\y/\w/\z [count=\n] in {0/-1.55/0.75/-0.75, 1.85/-2.25/-1.45/-1.55}
            {
                \draw[dashed] (\x,\y) -- (\w,\z);
            }
            \foreach \x/\y/\w/\z [count=\n] in {0/-1.55/1.45/-1.55, 1.85/-2.25/-1.85/-2.25}
            {
                \draw[thick, dotted] (\x,\y) -- (\w,\z);
            }
        \end{tikzpicture} &         \begin{tikzpicture}
            \foreach \x/\y [count = \n] in {0/0,2/-4,-2/-4, 0/-2.55, 1.85/-2.25, 1.45/-1.55, -1/-1, -1.45/-1.55, -1.925/-2.55}
			{
				\filldraw (\x, \y) circle (1.5pt);
			}

            \foreach \x/\y/\w/\z [count=\n] in {-1.925/-2.55/0/-2.55, 1.45/-1.55/-1.45/-1.55}
            {
                \draw[thick, dotted] (\x,\y) -- (\w,\z);
            }
            \foreach \x/\y/\w/\z [count=\n] in {0/-2.55/1.45/-1.55,1.85/-2.25/-1/-1}
            {
                \draw[dashed] (\x,\y) -- (\w,\z);
            }

             \draw (2,-4) ..controls (2, -2).. (0,0);
            \draw (-2,-4) ..controls (-2, -2).. (0,0);
            \draw[rounded corners] (2,-4) -- (0,-2.5) -- (-1,-4) -- (-2,-4);

            \node[right] at (1.85,-2.25) {$d'_i$};
            \node[left] at (-1.45,-1.55) {$x'$};
            
            \node[left] at (-1,-1) {$d'_k$};
            \node[above] at (0,-2.55) {$y$};
            \node[right] at (1.5,-1.5) {$x$};
            \node[left] at (-1.925,-2.55) {$y'$};
            \node[left] at (-2,-4) {$b'_k$};
            \node[right] at (2,-4) {$b_i$};
            \node [above] at (0,0) {$r$};
        \end{tikzpicture} \\
      (a)   & (b)
    \end{tabular}
    \caption{Illustration of the proof of \Cref{clm:far-away-1}. Dashed lines indicate isometric paths of length at most $K$. (a) and (b) illustrate the cases when $\dist{r}{y}\leq \dist{r}{d'_i}$, and when $\dist{r}{y}> \dist{r}{d'_i}$, respectively. Dotted lines indicates that the two end-vertices of the line are at the same distance from $r$.}\label{fig:case-1}.
\end{figure}

First, consider the case when $\dist{r}{y} \leq \dist{r}{d'_i}$. See \Cref{fig:case-1}(a). Let $z$ be the vertex of $Q_k$ such that $\dist{r}{z}=\min\{ \dist{r}{b'_k}, \dist{r}{d'_i}\}$. We claim that, $\dist{d'_k}{z}\leq K$. When $z\neq b'_k$, since $\dist{d'_i}{d'_k}\leq K$, the vertex $z$ which is at same distance from $r$ as $d'_i$ must lie at a distance at most $K$ from $d'_k$. When $z=b_k$, $\dist{r}{b'_k} \geq \dist{r}{c}$ and since $d'_k$ lies in the $(z,r)$-isometric path, $\dist{d'_k}{z}\leq K$.

Let $y'$ be the vertex of $Q_i$ such that $\dist{r}{y} = \dist{r}{y'}$. Using similar arguments as above, it can be shown that $\dist{y'}{x}\leq K$.
Now observe the following:

\begin{align*}
        \dist{b_k}{y} & \leq 1 + \dist{b'_k}{d'_k} + \dist{d'_k}{d'_i} + \dist{d'_i}{x} + \dist{x}{y}\\
        & \leq 1 + \dist{b'_k}{z} + \dist{z}{d'_k}+ \dist{d'_k}{d'_i} + \dist{d'_i}{y'} + \dist{y'}{x}+ \dist{x}{y} \\
         & \leq 1 + (\dist{b'_k}{z} + \dist{d'_i}{y'}) + \dist{z}{d'_k}+ \dist{d'_k}{d'_i} + \dist{y'}{x}+ \dist{x}{y} 
    \end{align*}

Observe that $\dist{b'_k}{z} = \dist{r}{b'_k} - \dist{r}{z}$ and $\dist{d'_i}{y'}=\dist{r}{d'_i} - \dist{r}{y'}$. Now consider two cases. If $z\neq b'_k$, then $\dist{r}{z}=\dist{r}{d'_i}$ and therefore,

    \begin{align*}
        \dist{b_k}{y} & \leq 1 + (\dist{r}{b'_k} - \dist{r}{z}) + (\dist{r}{d'_i} - \dist{r}{y'}) + \dist{z}{d'_k}+ \dist{d'_k}{d'_i} + \dist{y'}{x}+ \dist{x}{y} \\
         & \leq 1 + (\dist{r}{b'_k} - \dist{r}{d'_i} + \dist{r}{d'_i} - \dist{r}{y'}) + \dist{z}{d'_k}+ \dist{d'_k}{d'_i} + \dist{y'}{x}+ \dist{x}{y} \\
        & < 1 + |\dist{r}{b'_k} - \dist{r}{y}| + 4K \\
         & \leq 4K + |\dist{r}{b_k} - \dist{r}{y}|
    \end{align*}

    If $z=b'_k$, then let $z'$ be the vertex on $Q_i$ such that $\dist{r}{b'_k}=\dist{r}{z'}$. From $\dist{d_i}{z}=\dist{d'_i}{b'_k}\leq K$, $\dist{r}{b'_k}\leq \dist{r}{d'_i}$, and $\dist{d'_k}{d'_i}\leq K$, it follows that $\dist{z'}{d'_i}\leq K$. Hence,  

        \begin{align*}
        \dist{b_k}{y} & \leq 1 + \dist{z}{d'_k} + \dist{d'_k}{d'_i} + \dist{z'}{d'_i} + \dist{z'}{y'} + \dist{y'}{x}+ \dist{x}{y} \\
         & \leq 1 + (\dist{r}{z'} - \dist{r}{y'}) +  \dist{b'_k}{d'_k}+ \dist{d'_k}{d'_i} + \dist{y'}{x}+ \dist{x}{y} + \dist{z'}{y'} \\
        & < 1 + \dist{r}{b'_k} - \dist{r}{y} + 5K \\
         & \leq 5K + |\dist{r}{b_k} - \dist{r}{y}|
    \end{align*}

    The above arguments imply that $\dist{b_k}{y}\leq 5K + |\dist{r}{b_k} - \dist{r}{y}|$.
Now, observe that the $(b_k,y)$-subpath (say $P'$) of $P$ contains $D_k$. Therefore, $P'$ contains at least one component of the graph obtained by removing the edges of $D_k$ fromn the $(b_k,b_{k+1})$-subpath of $P$. The definition of $D_k$ now implies that the $(b_k,y)$-subpath of $P$ has at least $5K+2$ antichain vertices of $A$. Therefore, \Cref{prp:antichain-length} implies that $\dist{y}{b_k} \geq 5K+1+|\dist{r}{b_k} - \dist{r}{y}|$, a contradiction.

Now, consider the case where $\dist{r}{y} > \dist{r}{d'_i}$: see \Cref{fig:case-1}(b). Let $y'$ be the vertex of $Q_k$ such that $\dist{r}{y'}=\min\{ \dist{r}{b'_k}, \dist{r}{y}\}$. Let $x'$ be the vertex of $Q_k$ such that $\dist{r}{x} = \dist{r}{x'}$. Since $\dist{x}{y}< K$, $\dist{r}{x}=\dist{r}{x'}$, and $x'$ lies on a $(r,y')$-isometric path, we have $\dist{x'}{y'}< K$. 
Since $\dist{r}{x}< \dist{r}{d'_i}<\dist{r}{y}$ and $\dist{x}{y} < K$, we have $\dist{x}{d'_i}< K$. 
Now observe the following:

\begin{align*}
        \dist{b_k}{y} & \leq  1 + \dist{b'_k}{y'} + \dist{x'}{y'} + \dist{d'_k}{x'} + \dist{d'_k}{d'_i} + \dist{d'_i}{x} + \dist{x}{y}\\
         & \leq 1 + |\dist{r}{b'_k} - \dist{r}{y}| + \dist{x'}{y'} + \dist{d'_k}{x'} + \dist{d'_k}{d'_i} + \dist{d'_i}{x} + \dist{x}{y} \\
        & \leq 1 + |\dist{r}{b'_k} - \dist{r}{y}| + 5K \\
         & \leq 5K + |\dist{r}{b_k} - \dist{r}{y}|
    \end{align*}

Now, observe again that the $(b_k,y)$-subpath of $P$ has at least $5K+2$ antichain vertices of $A$. Therefore, \Cref{prp:antichain-length} implies that $\dist{y}{b_k} \geq 5K+1+|\dist{r}{b_k} - \dist{r}{y}|$. But this is a contradiction.

\end{subproof}

Now we prove \Cref{it:p7}. 
 \begin{claim}\label{clm:far-away-2}
     For any $i,j\in [t]$ with $i<j$, $\dist{Z_i}{Z_j}\geq K$.
 \end{claim}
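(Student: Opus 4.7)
The plan is, given $i<j$ and arbitrary $x\in Z_i$, $y\in Z_j$, to show $\dist{x}{y}\geq K$ by a short case split. The key preliminary is that $V(Q_k)\cap Z_k=\set{b_k,b'_k}$: by \Cref{clm:induced:1}, the internal vertices of $Q_k$ avoid $V(P)$, while Procedure~\ref{prc:pillar} guarantees $V(Q_k)\cap V(P)=\set{b_k}$, and $b'_k$ is the unique vertex of $Z_k$ that is not on $V(P)$. In particular, the two cases ``$x\in V(Q_i)$ or $y\in V(Q_j)$'' and ``$x,y\in V(P)$'' are exhaustive.

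\textbf{Case 1: $x\in V(Q_i)$ or $y\in V(Q_j)$.} The previous claim (\Cref{clm:strong}) yields $\dist{Z_a}{V(Q_b)}\geq K$ whenever $a\neq b$, so $\dist{x}{y}\geq K$ follows immediately.

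\textbf{Case 2: $x,y\in V(P)$.} Let $P_{xy}$ denote the subpath of $P$ between $x$ and $y$. The plan is to show that $D_i$ is a subpath of $P_{xy}$, after which $\dist{x}{y}=|E(P_{xy})|\geq |E(D_i)|$ (using that subpaths of the isometric path $P$ are themselves isometric). The containment follows by tracking the linear order of vertices along $P$: since Procedure~\ref{prc:pillar} places each $b_k$ on the $(u,a_k)$-subpath, and since \Cref{clm:lb1,clm:lb2} together control the distribution of antichain vertices of $A$ along $P$, one can verify that the vertices $b_1,a_1,b_2,a_2,\ldots,b_t,a_t$ appear in this order along $P$ from $u$ to $v$, with each $D_k$ strictly between $\set{b_k,a_k}$ and $\set{b_{k+1},a_{k+1}}$. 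Combined with the definitions of $X_k$, $Y_k$, and $Z_k$, this confines $Z_k\cap V(P)$ to the open segment of $P$ strictly between $D_{k-1}$ and $D_k$, so for $i<j$ the subpath $D_i$ lies entirely between $x$ and $y$ along $P$.

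It remains to bound $|E(D_i)|$. By \Cref{clm:lb2}, $|A\cap V(R_i)|\geq 12K+2$, and by construction each of the two components of $R_i\setminus D_i$ contains at least $5K+2$ antichain vertices of $A$, which leaves $|A\cap V(D_i)|\geq 12K+2-2(5K+2)=2K-2$. Since $D_i$ is an isometric subpath of $P$ and $A\cap V(D_i)$ is an antichain of $\overrightarrow{G_r}$, \Cref{prp:antichain-length} yields $|E(D_i)|\geq 2K-3\geq K$ (using the standing assumption $K\geq 5$). I expect the main technical step to be the bookkeeping in Case~2, specifically verifying that $Z_i\cap V(P)$ and $Z_j\cap V(P)$ sit on opposite sides of $D_i$ on $P$, which relies jointly on the pillar construction and the two antichain-density claims.
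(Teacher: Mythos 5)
Your proof is correct and takes essentially the same route as the paper's: both arguments reduce to the fact that along $P$ the sets $Z_i$ and $Z_j$ are separated by $D_i$, whose $2K-2$ antichain vertices force length at least $2K-3$ via \Cref{prp:antichain-length} (note that the lower bound on $\left|A\cap V(D_i)\right|$ really comes from the \emph{maximality} of $D_i$, which caps the antichain count of each component of $R_i\setminus E(D_i)$, rather than from the stated lower bound on those components --- but the paper elides this identically). The only cosmetic difference is that you dispatch the off-path vertices $b'_i,b'_j$ via \Cref{clm:strong}, whereas the paper absorbs them into the slack $2K-3>K+1$.
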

\begin{subproof}
The definitions of $D_i$ and $D_j$ imply that both of them contain at least $2K-2$ antichain vertices each, implying the length of both $D_i$ and $D_j$ are at least {$2K-3$}. Since $K\geq 5$, we have $2K-3>K+1$. Now, the claim follows from the definitions of $Z_i$ and $Z_j$.
\end{subproof}

 Now we show that the constructed branch sets are sufficiently far away from the branch paths, as required. First, we prove \Cref{it:p8}.

\begin{claim}\label{clm:far-away-3}
    For any $i\in [t-1]$, $\dist{T}{D_i} \geq K$.
\end{claim}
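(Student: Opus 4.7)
The plan is to reduce this claim to a direct application of Claim~\ref{clm:induced:2}, together with the defining property of the paths in $\mathcal{D}$.

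Suppose for contradiction that there exist vertices $x\in T$ and $y\in V(D_i)$ with $\dist{x}{y}<K$. By definition of $T$, we have $x\in V(T_j)\subseteq V(Q_j)$ for some $j\in [t]$. The strategy is to show that the $(y,b_j)$-subpath of $P$ necessarily contains at least $2K+1$ antichain vertices of $A$; Claim~\ref{clm:induced:2} then yields $\dist{y}{d}\geq K$ for every $d\in V(Q_j)$, in particular for $d=x$, which contradicts our assumption.

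To establish that the $(y,b_j)$-subpath of $P$ is ``heavy'' in antichain vertices, I would split into cases according to the relative position of $j$ and $i$. Recall that $y\in V(D_i)\subseteq V(R_i)$, where $R_i$ is the $(b_i,b_{i+1})$-subpath of $P$, and by construction each of the two components of $R_i\setminus D_i$ contains at least $5K+2$ antichain vertices of $A$. If $j\in\{i,i+1\}$, then the $(y,b_j)$-subpath of $P$ contains exactly one of these two components, so it contains at least $5K+2\geq 2K+1$ antichain vertices. If $j\notin\{i,i+1\}$, then the $(y,b_j)$-subpath of $P$ contains all of $D_i$ together with one entire component of $R_i\setminus D_i$, again giving at least $5K+2$ antichain vertices. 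In either case the hypothesis of Claim~\ref{clm:induced:2} (applied with $c:=y$ and index $j$) is satisfied.

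The main obstacle is essentially notational: one has to check the case split carefully so as to guarantee that a full ``heavy'' component of $R_i\setminus D_i$ is always contained in the $(y,b_j)$-subpath of $P$, regardless of whether $j$ lies to the left, to the right, or within $\{i,i+1\}$. Once this is established, invoking Claim~\ref{clm:induced:2} is immediate and produces the contradiction $K\leq \dist{y}{x}<K$.
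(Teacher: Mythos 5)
Your proposal is correct and follows the paper's own argument: both reduce the claim to \Cref{clm:induced:2} by observing that for $y\in V(D_i)$ and any $j$, the $(y,b_j)$-subpath of $P$ contains a full component of $R_i\setminus D_i$ and hence at least $5K+2\geq 2K+1$ antichain vertices of $A$. The paper states this containment without the explicit case split on the position of $j$, which you supply; otherwise the two proofs are the same.
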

\begin{subproof}
Suppose that there is a vertex $x\in D_i$ and $y\in T$ such that $\dist{x}{y} < K$. The definition of $T$ implies that $y\in V(Q_j)$ for some $j\in [t]$. Since the $(b_j,x)$-subpath of $P$ contains at least $5K+2$ antichain vertices of $A$, we have a contradiction due to \Cref{clm:induced:2}. 
\end{subproof}

Using similar arguments as \Cref{clm:far-away-3}, we can prove \Cref{it:p9}.

\begin{claim}\label{clm:far-away-4}
For any $i\in [t-1], j\in [t]$ with $|j-i|\notin \{0,1\}$, we have $\dist{D_i}{Z_j} \geq K$.
\end{claim}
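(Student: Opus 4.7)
The plan is to mirror the argument of \Cref{clm:far-away-3}, replacing the appeal to \Cref{clm:induced:2} for vertices of $T\subseteq \bigcup_i V(Q_i)$ with a parallel argument for the at-most-one vertex of $Z_j$ lying off $P$. I will suppose for contradiction that there exist $x\in V(D_i)$ and $y\in Z_j$ with $\dist{x}{y}<K$, where $|j-i|\geq 2$. By construction, $Z_j=V(X_j)\cup Y_j\cup \{b'_j\}$, and the unique vertex of $Z_j$ not lying on $P$ is $b'_j$. Hence either $y\in V(P)$ or $y=b'_j$, and both cases will be reduced to the same geometric observation about subpaths of $P$.

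\textbf{Key observation.} I will show that for every $y'\in (Z_j\cap V(P))\cup \{b_j\}$, the $(x,y')$-subpath of $P$ contains at least $5K+2$ antichain vertices of $A$. To see this, note that $V(X_j)$ occupies a small window around $a_j$ (containing at most $3$ antichain vertices of $A$, by \Cref{clm:lb1}), and $Y_j$ is by construction confined to the $b_j$-components of $R_{j-1}\setminus D_{j-1}$ and $R_j\setminus D_j$. Combined with \Cref{clm:lb2} and the location $D_i\subseteq R_i$, the hypothesis $|j-i|\geq 2$ forces every such $y'$, as well as $b_j$ itself, to lie strictly outside $V(R_i)$ on $P$. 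Consequently, the $(x,y')$-subpath of $P$ must entirely traverse either the $b_{i+1}$-side of $D_i$ in $R_i$ (when $y'$ lies past $b_{i+1}$ on $P$) or the $b_i$-side (when $y'$ lies before $b_i$); by the defining property of $D_i$, that side contains at least $5K+2$ antichain vertices of $A$.

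\textbf{Finishing the two cases.} If $y=b'_j$, I set $y':=b_j$: by the key observation the $(x,b_j)$-subpath of $P$ contains at least $5K+2\geq 2K+1$ antichain vertices of $A$, so \Cref{clm:induced:2} applied with $c:=x\in V(P)$ and index $j$ yields $\dist{x}{b'_j}\geq K$, contradicting $\dist{x}{y}<K$. If instead $y\in V(P)$, I set $y':=y$: the $(x,y)$-subpath of $P$ is an isometric path (as a subpath of the isometric path $P$) containing at least $5K+2$ antichain vertices of $A$, so \Cref{prp:antichain-length} yields
\[
\dist{x}{y} \geq |\dist{r}{x}-\dist{r}{y}| + (5K+2) - 1 \geq 5K+1 \geq K,
\]
again a contradiction. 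The only delicate step is verifying the key observation, and more specifically that $Z_j\cap V(P)$ lies outside $V(R_i)$ on $P$ when $|j-i|\geq 2$; this follows directly from the construction of $Y_j$ and $X_j$ together with the antichain-counting bounds of \Cref{clm:lb1} and \Cref{clm:lb2}. Once this is in hand, the rest is an immediate application of \Cref{clm:induced:2} and \Cref{prp:antichain-length}, exactly as in the proof of \Cref{clm:far-away-3}.
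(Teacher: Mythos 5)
Your argument is correct and is exactly the "similar argument to \Cref{clm:far-away-3}" that the paper invokes without writing out: you split $Z_j$ into its on-$P$ part (handled by the $5K+2$ antichain vertices guaranteed by the placement of $D_i$, $D_j$ and \Cref{prp:antichain-length}, as in \Cref{clm:far-away-0}) and the single off-$P$ vertex $b'_j\in V(Q_j)$ (handled by \Cref{clm:induced:2}, as in \Cref{clm:far-away-3}). The key observation you flag as the delicate step does hold for the reason you give — with $|j-i|\geq 2$, the relevant $(x,y')$-subpath of $P$ always contains an entire component of $R_i\setminus D_i$ (or of $R_j\setminus D_j$), each of which carries $5K+2$ antichain vertices by the definition of $D_i$.
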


The proof of \Cref{lem:house-ipac} is now complete.

\subsection{Monoholed graphs}\label{sec:mono}

Now, we shall prove \Cref{thm:mono}. We need the definitions of \emph{Truemper configurations}~\cite{vuskovic2013world}. A \emph{theta} is a graph made of three internally vertex-disjoint induced paths $P_1 = a\ldots b$, $P_2 = a\ldots b$, $P_3 = a\ldots b$ of lengths at least~2, and such that no edges exist between the paths except the three edges incident to $a$ and the three edges incident to $b$. A \emph{pyramid} is a graph made of three induced paths $P_1 = a\ldots b_1$, $P_2 = a\ldots b_2$, $P_3 = a\ldots b_3$, two of which have length at least~$2$, that are vertex-disjoint (except at $a$), such that $b_1 b_2 b_3$ is a triangle, and no edges exist between the paths except those of the triangle and the three edges incident to $a$. A \emph{prism} is a graph made of three vertex-disjoint induced paths $P_1 = a_1\ldots b_1$, $P_2 = a_2\ldots b_2$, $P_3 = a_3\ldots b_3$ of lengths at least $1$, such that $a_1 a_2 a_3$ and $b_1 b_2b_3$ are triangles and no edges exist between the paths except those of the two triangles. 

A \emph{$3$-path configuration} is a theta, prism or a pyramid. A $3$-path configuration is \emph{balanced} if the lengths of all the paths in the respective definitions are the same; otherwise, it is \emph{unbalanced}. The \emph{length} of a balanced 3-path configuration is the length of the paths in the respective definitions. Notice that all $3$-path configurations in a monoholed graph must be balanced and of the same length. Now, we will be done by proving the following lemma.

\begin{lemma}
    Let $G$ be a graph such that all $3$-path configurations contained in $G$ are balanced and of the same length. Then $G$ does not contain a $10$-fat minor model of $U_4$. 
\end{lemma}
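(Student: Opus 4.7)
The plan is to assume for contradiction that $G$ contains a $10$-fat minor model $\mathcal{M}$ of $U_4$, and to derive a violation of the balanced/same-length hypothesis by extracting from $\mathcal{M}$ two theta configurations whose arm lengths force an algebraically impossible identity. Label the branch sets of $\mathcal{M}$ as $B_u, B_1, \ldots, B_4$ (for the universal vertex $u$ and the path vertices $a_1, \ldots, a_4$), the branch paths between $B_u$ and $B_i$ as $P_i$, and those between $B_j$ and $B_{j+1}$ as $Q_j$. By standard preprocessing (trimming each branch path to an induced path meeting each incident branch set in a single attachment vertex), every branch path has length at least $10$, and within any branch set the attachments of distinct incident branch paths are at pairwise distance at least $10$ in $G$. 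Denote the attachment of $P_i$ in $B_u$ by $x_i$, of $P_i$ in $B_i$ by $y_i$, and the two attachments of $Q_j$ by $z_j \in B_j$ and $w_{j+1} \in B_{j+1}$.

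The two thetas $\Theta_2, \Theta_3$ are built from the three routes between $(B_u, B_2)$ and between $(B_u, B_3)$, respectively, that are available in $U_4$. Concretely, the arms of $\Theta_2$ traverse $P_2$ (direct), $P_1 \cup B_1 \cup Q_1$ (via $a_1$), and $P_3 \cup B_3 \cup Q_2$ (via $a_3$); the arms of $\Theta_3$ traverse $P_3$, $P_2 \cup B_2 \cup Q_2$, and $P_4 \cup B_4 \cup Q_3$. To turn these triples of routes into genuine $3$-path configurations I select a common centre $c \in B_u$ admitting four internally vertex-disjoint paths $\mu_1, \mu_2, \mu_3, \mu_4$ from $c$ to $x_1, x_2, x_3, x_4$ (existence is ensured by a Menger/Steiner-tree analysis inside the connected induced subgraph $B_u$, exploiting that the $x_i$ are at pairwise distance at least $10$), together with centres $v^{(2)} \in B_2$, $v^{(3)} \in B_3$ from which three internally disjoint paths reach the three attachments inside each of those branch sets. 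The fat-minor distance condition precludes chord edges between arms that lie in distinct branch paths or in non-shared branch sets; moreover, since all attachments within a single branch set are at pairwise distance at least $10$, they cannot form a triangle, ruling out prism or pyramid configurations and forcing each of $\Theta_2, \Theta_3$ to be a theta.

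Equating every arm of $\Theta_2, \Theta_3$ to the common length $m$ and subtracting the equation for the direct arm of $\Theta_2$ (of length $|\mu_2| + |P_2| + \mathsf{d}_{B_2}(y_2, v^{(2)})$) from the equation for the arm of $\Theta_3$ that traverses $P_2 \cup B_2 \cup Q_2$ (of length $|\mu_2| + |P_2| + \mathsf{d}_{B_2}(y_2, z_2) + |Q_2| + \mathsf{d}_{B_3}(w_3, v^{(3)})$), the common $|\mu_2|$ and $|P_2|$ cancel. Choosing $v^{(2)} = y_2$ collapses $\mathsf{d}_{B_2}(y_2, v^{(2)})$ to zero, and the balance identity reduces to
\[
0 \;=\; \mathsf{d}_{B_2}(y_2, z_2) \;+\; |Q_2| \;+\; \mathsf{d}_{B_3}(w_3, v^{(3)}).
\]
But $\mathsf{d}_{B_2}(y_2, z_2) \geq 10$ and $|Q_2| \geq 10$, so the right-hand side is at least $20$, contradicting the identity. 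If the choice $v^{(2)} = y_2$ is not compatible with the internal disjointness required inside $B_2$, a symmetric case analysis using another attachment of $B_2$ as centre of $\Theta_2$ yields the same contradiction. The main obstacle is precisely this coordinated choice of centres inside the branch sets so that both thetas are legitimate $3$-path configurations and the $B_u$-contributions cancel cleanly; once the centres are fixed, the final contradiction is a one-line arithmetic computation.
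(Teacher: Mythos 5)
There is a genuine gap, and it sits exactly where you flag ``the main obstacle'': the existence of your two configurations $\Theta_2,\Theta_3$ as bona fide $3$-path configurations is not established, and the key claim you invoke for it is false in general. A branch set of a fat minor model is only required to be connected; nothing prevents $B_u$ from being, say, a long path with the attachment vertices $x_1,x_2,x_3,x_4$ spread along it. From any vertex $c$ of such a $B_u$ there are only two ``directions'', so no Menger/Steiner argument can produce three (let alone four) internally vertex-disjoint paths from a common centre $c$ to the $x_i$ inside $B_u$; the same objection applies to the centres $v^{(2)},v^{(3)}$ inside $B_2,B_3$. Moreover, even when such disjoint paths exist, the fat-minor distance condition gives you no control over edges \emph{within} a shared branch set: the segments $\mu_1,\mu_2,\mu_3$ inside $B_u$, and the three approach segments inside $B_2$, may be joined by arbitrary chords, which can destroy induced-ness and the ``no edges between arms except at the two ends'' requirement in the definition of a theta (or turn the configuration into something that is not a $3$-path configuration at all). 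Your final cancellation identity is fine and robust (it has slack about $20$ against $K=10$), but it is conditional on structures whose existence is precisely the hard part and is left unproved.

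For comparison, the paper's proof never tries to meet the three routes at a single vertex inside a branch set. It first builds one long \emph{induced cycle} $D$ following the outer $5$-cycle $a_0a_1a_2a_3a_4$ of $U_4$ (using shortest witnesses of the branch paths and shortest connecting paths inside the branch sets, so that each branch set and each branch path contributes a long subpath of $D$), then adds a shortest induced chord-path $Q$ through $B_0\cup C_0^2\cup B_2$; the pair $(D,Q)$ automatically yields a balanced $3$-path configuration without any centre-choosing. A second shortest chord-path $R$ through $B_0\cup C_0^3\cup B_3$ is then shown, via adjacency claims (minimality of $R$, and a claim that $R$ cannot attach to the cycle $F'$ formed by two of the arms), to create a further $3$-path configuration that is unbalanced or of a different length, giving the contradiction. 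So while your idea of playing the two ``chords'' $a_0a_2$ and $a_0a_3$ of $U_4$ against each other is in the same spirit, the construction you propose does not go through as stated, and repairing it essentially requires redoing the cycle-plus-chord analysis that constitutes the bulk of the paper's argument.
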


\begin{proof}

\newcommand{\connector}[2]{C_{#1}^{#2}}
\newcommand{\witness}[1]{W_{#1}}

\begin{figure}
	\centering
	\begin{tabular}{cc}
		\begin{tikzpicture}
			\foreach \x/\y [count = \n] in {0/0,1/0,2/0,3/0,1.5/1}
			{
				\filldraw (\x, \y) circle (2pt);
				
			}
			
			\node[above] at (1.5,1) {$a_0$};
			\foreach \x/\y [count = \n] in {0/0, 1/0,2/0,3/0}
			{
				\node[below] at (\x, \y) {$a_{\n}$};
			}
			
			\foreach \x/\y/\w/\z [count = \n] in {0/0/1/0,1/0/2/0,2/0/3/0}
			{
				\draw (\x,\y) -- (\w,\z);
				\draw (1.5,1) -- (\x,\y);
			}
			
			\draw (1.5,1) -- (3,0);
		\end{tikzpicture}& \begin{tikzpicture}[scale=0.85]
			\foreach \x/\y [count = \n] in {0/0, 3/0,6/0,9/0}
			{
				\draw[dashed] (\x,\y) ellipse (1cm and 0.5cm);
				\node[below] at (\x,\y-0.5) {\scriptsize $B_{\n}$};
			}
			
			\foreach \x/\y/\w/\z [count = \n] in {4/0.85/0/2, 5/0.85/0/3, 1.5/2.25/0/1,1.5/0/1/2,4.5/0/2/3,7.5/0/3/4, 7.5/2.25/0/4}
			{
				\node[above] at (\x,\y) {\scriptsize $\connector{\w}{\z}$};
			}
			
			\draw[dashed] (4.5,2) ellipse (1cm and 0.5cm);
			\node[above] at (4.5,2.5) {$B_{0}$};
			
			\draw [dashed, fill=gray,opacity=0.25,rounded corners] (2.75,0.35) -- (4,1.75) -- (4.25,1.75) -- (3,0.35) -- cycle;
			
			\draw [dashed, fill=gray,opacity=0.25,rounded corners] (6.25,0.35) -- (5,1.75) -- (4.75,1.75) -- (6,0.35) -- cycle;
			
			\draw [fill=gray,opacity=0.25,rounded corners] (-0.5,0.25)  ..  controls (-0.5,2.25) .. (3.8,2.25) -- (3.8,2) .. controls (-0.25,2) .. (-0.25,0.25)  -- cycle;
			
			\draw [fill=gray,opacity=0.25,rounded corners] (9.5,0.25)  ..  controls (9.5,2.25) .. (5.25,2.25) -- (5.25,2) .. controls (9.25,2) .. (9.25,0.25)  -- cycle;
			
			\foreach \x/\y [count = \n] in {1.5/0, 4.5/0,7.5/0}
			{
				\draw[dashed,fill=gray,opacity=0.25] (\x,\y) ellipse (0.75cm and 0.15cm);
			}

            \draw[line width=0.3mm] (0.5,0) -- (8.5,0);
            \draw [line width=0.3mm,rounded corners] (0.5,0) -- (0,0) -- (-0.375,0.2) ..  controls (-0.375,2.125) .. (3.8,2.125) -- (4.5,2.125);

            \draw [line width=0.3mm, rounded corners] (8.5,0) -- (9,0) -- (9.375,0.2) ..  controls (9.375,2.125) .. (5,2.125) -- (4.5,2.125);

            \draw [line width=0.12mm, rounded corners] (5,2.125) -- (4,1.625) -- (2.9,0.4) -- (2.9,0);

            \draw [line width=0.12mm, rounded corners] (4,2.125) -- (5,1.625) -- (6.1,0.4) -- (6.1,0);

            \foreach \x/\y [count = \n] in {6.1/0,4/2.125,5/2.125}
			{
				\filldraw (\x, \y) circle (2pt);
				
			}
            \node [above] at (4.15,2.05) {\scriptsize $y$};
            \node [below] at (6.1,0) {\scriptsize $x$};
            \node [below] at (5,2.125) {\scriptsize $z_3$};
            
            \foreach \x/\y [count = \n] in {-1/1,2.75/1,10.5/1}
			{
				\node[left] at (\x, \y) {\scriptsize$ P_{\n}$};
				
			}
            \draw[->] (-1.1,1) -- (-0.375,1); 
            \draw[->] (2.65,1) -- (3.375,1);
            
            \draw[->] (10,1) -- (9.4,1);
		\end{tikzpicture}\\
		(a) & (b)
	\end{tabular}
	\caption{(a) The graph $U_4$. (b) Illustration of the proof of \Cref{thm:mono}. The  thick, black, closed curve indicates the induced cycle $D$ (in \Cref{clm:long-cycle}). }\label{fig:mono-1}
\end{figure} 

Assume for contradiction that $G$ contains a $K$-fat minor model of $U_4$ with $K\geq 10$. Let the vertices of $U_4$ be denoted by $a_0,a_1,a_2,a_3,a_4$ as shown in \Cref{fig:mono-1}(a). Unless stated explicitly, all addition operations in this proof will be taken modulo $5$. Let $B_i$ denote the branch set corresponding to the vertex $a_i$ and $\connector{i}{j}$ denote the branch path corresponding to the edge between $a_i$ and $a_j$. See \Cref{fig:mono-1}(b) for illustrations.
For the sake of readability, we shall use the same notations to indicate the subgraphs induced by the branch sets and branch paths. Let $Z=\{(0,1), (0,2), (0,3), (0,4), (1,2), (2,3), (3,4)\}$. 
A \emph{witness} of a branch path $\connector{i}{j}, (i,j)\in Z$ is an induced path $P$ such that $V(P)\subseteq \connector{i}{j}$, one end-vertex of $P$ lies in $B_i$, and the other end-vertex of $P$ lies in $B_j$. 
For $0\leq i\leq 4$, let $\witness{i}$ be a witness of $\connector{i}{i+1}$ of shortest possible length, and let $u_i\in B_i,v_{i+1}\in B_{i+1}$ be the end-vertices of $\witness{i}$. For $0\leq i\leq 4$, let $X_i$ denote a path of shortest possible length between $u_i$ and $v_i$ such that $V(X_i)\subseteq B_i$.

Let $H$ be the subgraph of $G$ induced by $\displaystyle\bigcup\limits_{i=0}^4 V(X_i)\cup V(W_i)$.
\begin{claim}\label{clm:long-cycle}
    The graph $H$ contains an induced cycle $D$ such that for each $0\leq i\leq 4$, both $V(B_i)\cap V(D)$ and $V(\connector{i}{i+1})\cap V(D)$ induce paths of lengths at least $K-2$.

\end{claim}
\begin{subproof}
The minimality of $\witness{i}$ implies that only the end-vertices of $\witness{i}$ or their neighbors (in $\witness{i}$) can be adjacent to some vertices of $X_i$ or $X_{i+1}$. Recall that for each $0\leq i\leq 4$, $\witness{i}$ has length at least $K$ since it connects two branch sets. Hence, there must exist a subpath of $\witness{i}$, say $\witness{i}'$, of length at least $K-2$,
such that no internal vertex of $\witness{i}'$ is adjacent to a vertex of $X_{i}$
or $X_{i+1}$. Similarly, since the distance between two witnesses (corresponding to two branch paths) is at least $K$, observe that there must exist a subpath of $X_i$, say $X'_i$, of length at least $K-2$, none of whose internal vertex is adjacent to the vertices of $\witness{i-1}$ or $\witness{i}$.
Now, using $\witness{i}'$ and $X'_i$, $0\leq i\leq 4$, it is possible to construct an induced cycle $D$ satisfying the claim.
\end{subproof}

Let $D$ be the induced cycle whose existence was proved in \Cref{clm:long-cycle}. For $i\in \{0,1,2,3,4\}$, let $D_i=B_i\cap D$. Let $Q$ be an induced path of the shortest possible length between a vertex of $D_2$ and $D_0$ such that $V(Q)\subseteq B_0\cup \connector{0}{2}\cup B_2$. Since $B_0, \connector{0}{2},B_2$ are connected induced subgraphs, $Q$ always exists. 
Notice that $Q$ and $D$ together create a $3$-path configuration (which is balanced since $G$ is monoholed), say 
$P_1$, $P_2$, $P_3$, such that $V(P_1)\cap V(\witness{0})\neq \emptyset, V(P_2) \cap \connector{0}{2} \neq \emptyset$ and $V(P_3)\cap V(\witness{4})\neq \emptyset$. See \Cref{fig:mono-1}(b) for an illustration.
For $i\in [3]$, let $z_i$ (resp. $z'_i)$ denote the end-vertex of $P_i$ which is adjacent to some vertices of $D_0$ (resp. $D_2)$. Let $K'$ be the length of the three paths. Observe that the vertices of the paths $P_2$ and $P_1$ (resp. $P_2$ and $P_3$) are contained in 
a cycle $F$ (resp. $F'$) of order at most $2K'+2$ which also contains $D_1$ (resp. $D_3$). ote that the vertices of $F$ are either contained in $P_1$ or $P_2$. Similarly, the vertices of $F'$ are contained in either $P_2$ or $P_3$. Moreover, $D_3$ is a subpath of $P_3$.

Now, consider an induced path $R$ of the shortest possible length between a vertex of $D_3$ and $D_0$ such that $V(R) \subseteq B_0 \cup \connector{0}{3} \cup B_3$. Let $x$ and $y$ denote the end-vertices of $R$ that lie on $D_3$ and $D_0$, respectively. Let $x',y'$ be the neighbours of $x$ and $y$ on $R$, respectively and $R'$ be the $(x',y')$-subpath of $R$. The minimality of $R$ implies that no internal vertex of $R'$ is adjacent to any vertex of $D_0$ and $D_3$. Furthermore, we prove the following claim.

\begin{claim}
    No vertex of $R'$ is adjacent to any vertex of $F'$.
\end{claim}
\begin{subproof}
Suppose for a contradiction that there is a vertex of $R'$ which is adjacent to some vertex of $F'$. Let $w$ be the vertex of $R'$, which is closest to $x'$ and is also adjacent to some vertex $w'$ of $F'$. Note that $w'\in V(P_2)$ or $w'\in V(P_3)$. 

Suppose first that $w'\in V(P_3)$. Observe that $x$ is a vertex of $D_3$, which implies that $x\in V(P_3)$. Let $P'$ denote the path created by the vertices of the $(w,x')$-subpath of $R$ along with $w'$ and $x$. Observe that both endpoints of $P'$ lie in $P_3$. Therefore, the $(w',x)$-subpath of $P_3$ has strictly smaller length than $P_3$. Hence,
$F'$ together with $P'$ must create a $3$-path configuration, which is either unbalanced or has length strictly less than $K'$, a contradiction. For similar reasons, $w'$ cannot be $z_2$. 

Now, consider the case where $w'$ is an internal vertex of $P_2$. Let $U$ be the maximal subpath of $P_3$ whose one end-vertex is $z'_3$, and no internal vertex of $U$ is a neighbor of $x'$. Observe that $U$ together with the $(w,x')$-subpath of $R'$ creates an induced path $U'$ whose (only) end-vertices are adjacent to the vertices of $F$ (the induced cycle created by $P_2$ and $P_1$). In particular, both end-vertices of $U'$ lies on $P_2$ or is adjacent to a vertex of $P_2$.  Since $w'$ is an internal vertex of $P_2$, $F$ together with $U'$ must create a $3$-path configuration $Q$ containing a path which is a proper subpath of $P_2$. Hence, either $Q$ is unbalanced or has a length strictly less than $K'$, a contradiction.
\end{subproof}

Let $S$ be the $(y,z_3)$-subpath of $D_0$ and let $S'$ be the subpath of $S$ of minimum possible length such that one end-vertex of $S'$ is adjacent to $y'$ and the other end-vertex is adjacent to $z_2$. Let $R''$ denote the path induced by $V(R')\cup V(S')$. Then, $F'$ along with $R''$ contains a $3$-path configuration $Q$ which is either unbalanced or has length strictly greater than $K'$, a contradiction.
\end{proof}

\subsection{Even-hole free graphs}\label{sec:even}
\newcommand{\Basic}[1]{B_{#1}}
\newcommand{\EvenH}[1]{G_{#1}}

\begin{figure}[t]
\centering
\begin{tabular}{cc}
	\begin{tikzpicture}
		 \foreach \x/\y [count = \n] in {0/0,0/0.5,0/1,0/1.5,0/2,-0.5/-0.5,0.5/-0.5}
		 {
		 	\filldraw  (\x,\y) circle (2pt);
		 }
		 
		 \draw [thick] (0,2) -- (0,0) -- (-0.5,-0.5) -- (0.5,-0.5) -- (0,0);
	\end{tikzpicture} &\begin{tikzpicture}[scale=0.85]
	\foreach \z [count=\m] in {0,3,6,9}
	{
	
		\foreach \x/\y [count = \n] in {\z/0,\z/0.5,\z/1,\z/1.5,\z/2,\z-0.5/-0.5,\z+0.5/-0.5}
		{
			\filldraw  (\x,\y) circle (2pt);
			
		}
	
		\node[above] at (\z,2) {$r_{\m}$};
		\node[left] at (\z,0) {$a_{\m}$}; \node[left] at (\z-0.5,-0.5) {$c_{\m}$};\node[right] at (\z+0.5,-0.5) {$b_{\m}$};

		\draw [thick] (\z,2) -- (\z,0) -- (\z-0.5,-0.5) -- (\z+0.5,-0.5) -- (\z,0) -- cycle;

	}
	
	\foreach \z [count=\m] in {0,3,6}
	{
			\foreach \x [count = \n] in {\z+0.75,\z+1.5,\z+2.25}
		{
			\filldraw  (\x,2) circle (2pt);
		}
		
		\foreach \x [count = \n] in {\z+0.75,\z+1.25,\z+1.75,\z+2.25}
		{
			\filldraw  (\x,-1) circle (2pt);
		}
		
		\draw[thick] (\z,2) -- (\z+3,2);
		
		\draw[thick] (\z+0.5,-0.5) -- (\z+0.75,-1) -- (\z+2.25,-1) -- (\z+2.5,-0.5);
	}
	\end{tikzpicture}  \\
	(a) & (b)
\end{tabular}
\label{fig:evenholefree}
\caption{Proof of \Cref{thm:even}. (a) The graph $B_4$. (b) The graph $G_4$.}
\end{figure}

In this section, we show \Cref{thm:even} that for each integer $k\geq 1$, there exists an even-hole free graph $Z_k$ with maximum degree at most $3$,
such that $\ipco{Z_k}\geq k$. It is enough to prove the statement for each integer $k\geq 4$ (and setting $Z_i=Z_4$ for $i=1,2,3$). For an integer $k\geq 4$, let $\Basic{k}$ be the graph obtained by taking a triangle $T$ and attaching to one of its vertices a path of length $k$ (see \Cref{fig:evenholefree}. 
We construct $G_k$ as follows.
\begin{itemize}
    \item Let $F_1, F_2, \ldots, F_k $ be disjoint copies of $\Basic{k}$. Let $a_i,b_i,c_i$ be the three vertices of the unique triangle of $F_i$, where the degree of $a_i$ is~3, and the only vertex of degree~1 in $F_i$ is denoted by $r_i$. 

    \item Connect $r_i$ and $r_{i-1}$ using a path $P_i$ of length $k$. Connect $c_i$ and $b_{i-1}$ using a path $Q_i$ of length $k+1$. The resulting graph is $G_k$.
\end{itemize}

Let $R=\{r_i\colon i\in [k]\}$. We prove the following lemma.
\begin{lemma}\label{lem:even-hole-free}
    The graph $\EvenH{k}$ does not contain any even hole.
\end{lemma}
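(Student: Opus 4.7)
I plan to characterise every hole in $\EvenH{k}$ and show its length is always odd. The key observation is that $\EvenH{k}$ decomposes into a \emph{top rail} (the pendant paths from $a_i$ to $r_i$ together with the paths $P_j$), which is a tree, and a \emph{bottom rail} (the triangle edges $b_ic_i$ together with the paths $Q_j$), which is a single path. Hence any cycle distinct from a triangle $a_ib_ic_i$ must use edges of both rails and transition between them through some $F_i$'s.

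I would first analyse the local behaviour of a hole $C$ at each $F_i$. The only degree-$3$ vertices of $\EvenH{k}$ are the $a_i,b_i,c_i,r_i$; moreover the three neighbours of each $r_i$ lie in three distinct path segments and are pairwise non-adjacent. Hence all possible chords of $C$ come from triangle edges, which forces $C$ to use at most one of the three edges of the triangle of each $F_i$ (using two creates a chord; using all three would contain the triangle as a sub-cycle of $C$). Combined with the degree-$2$ condition at each vertex of $C$, this yields four admissible local types at each $F_i$: $C$ either avoids the triangle, or uses the edge $b_ic_i$ and passes straight through on the bottom, or \emph{descends} between the rails via the pendant together with exactly one of $a_ib_i$ or $a_ic_i$.

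Next I would establish the global shape of $C$. Since the bottom rail is a simple path, the restriction of $C$ to the bottom is necessarily a single sub-path, which forces $C$ to have exactly two descents, at two distinct $F_l, F_r$ with $l < r$. A case analysis on which triangle edges the two descents use, together with the linearity of the bottom rail, pins down the hole uniquely: $F_l$ descends via $a_lb_l$, $F_r$ descends via $a_rc_r$, every intermediate $F_i$ (with $l < i < r$) passes through via $b_ic_i$, and every remaining $F_i$ is untouched by $C$. Summing edge lengths then gives $|C| = 2k + (r-l)k + (r-l)(k+1) + (r-l+1) = 2m(k+1) - 1$ with $m = r-l+1 \ge 2$, which is odd.

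The main obstacle will be ruling out the scenario of four or more descents, corresponding to a cycle with several disjoint top walks paired with several disjoint bottom walks. I plan to handle this by observing that any bottom-walk components must be sub-paths of the linear bottom rail, and then checking that every possible pairing of descents into bottom sub-paths either disconnects $C$ into several cycles or forces two sub-paths to overlap at an intermediate $F_i$, which contributes a third cycle-edge at some $b_i$ or $c_i$ and violates either the degree-$2$ condition or the no-chord constraint. Once uniqueness is in hand, the length computation is immediate.
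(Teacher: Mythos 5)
Your plan is correct and follows essentially the same route as the paper: both characterise every hole of $G_k$ as the cycle that ascends at some column $l$, runs along the top, descends at a column $r>l$, and returns along the bottom through the intermediate edges $b_jc_j$, and then compute its length as $(r-l+1)(2k+2)-1$, which is odd. The only real difference is that you explicitly address the multi-descent scenario via the disjoint-interval/connectivity argument, a step the paper dismisses as ``clear from the construction''; your local-type analysis and your length count both check out.
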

\begin{proof}
Let $C$ be an induced cycle of length at least $4$. It is clear from the construction that $C$ will contain at least one vertex from $R$. Moreover, if $r_i, r_t\in V(C)$ for some $i<t$, then for any $i<j<t$, $r_j\in V(C)$. Let $i$ (resp. $t$) be the minimum (resp. maximum) index such that $r_i\in V(C)$ (resp. $r_t\in V(C)$). Hence, for each $i<j\leq t$, the path $P_j$ is also part of $C$. It is also easy to verify that for each $i<j\leq t$, the path $Q_j$ is also part of $C$. Let $X_i$ (resp. $X_t$) be the path between $r_i$ and $b_i$ (resp. $r_t$ and $c_t$). Observe that $$V(C)=V(X)\cup V(X') \displaystyle\bigcup\limits_{i<j\leq t} \left(V(P_j)\cup V(Q_j) \right) \displaystyle\bigcup\limits_{i<j<t} \{b_j,c_j\} $$ Hence the length of $C$ is $(t-i)k+(t-i)(k+1) + 2(k+1) + (t-i)-1 =(t-i+1)(2k+2)-1$, which is always odd. 
\end{proof}

\begin{lemma}\label{lem:even-hole-ipco}
Let $r=r_k, G=G_k$. Then $\ipcor{r}{G}\geq k$. 
\end{lemma}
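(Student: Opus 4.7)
The plan is to exhibit an isometric path $P^*$ in $G = G_k$ that contains $k$ pairwise-antichain vertices with respect to $\overrightarrow{G_r}$, where $r = r_k$. By \Cref{lem:ipac-ipco}, this will imply $\ipcor{r}{G} = \ipac{\overrightarrow{G_r}} \geq k$.

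First I would compute the distances from $r_k$ in $G_k$. Every shortest path from $r_k$ to a vertex of $F_i$ travels along the ``top'' through $P_k, P_{k-1}, \ldots, P_{i+1}$ and enters $F_i$ via $r_i$ and its tail, because any detour through a bottom path $Q_j$ (of length $k+1$) is strictly longer than the parallel $P_j$ (of length $k$). A routine induction on $k - i$ then yields $\dist{r_k}{a_i} = k(k-i+1)$ and $\dist{r_k}{b_i} = \dist{r_k}{c_i} = k(k-i+1)+1$ for every $i \in [k]$.

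Next, I would consider the ``bottom path''
\[
  P^* \;=\; a_1,\, b_1,\, Q_2,\, c_2,\, b_2,\, Q_3,\, c_3,\, \ldots,\, b_{k-1},\, Q_k,\, c_k,\, a_k
\]
of length $k + (k-1)(k+1) = k^2+k-1$. It is straightforward to check that $P^*$ is induced. The crucial step is to show $P^*$ is isometric. Any alternative $(a_1, a_k)$-path either stays on the top (length exactly $k^2 + k$) or switches tracks at some intermediate $F_j$; each such switch forces an extra traversal of the length-$k$ tail of $F_j$, costing at least $k+1$ additional edges, while the bottom path saves only $1$ edge per segment compared to the top. Hence $\dist{a_1}{a_k} = k^2 + k - 1$ and $P^*$ is isometric.

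Finally, I would take $A = \{b_1, b_2, \ldots, b_{k-1}, c_k\} \subseteq V(P^*)$ and verify that $A$ is pairwise antichain in $\overrightarrow{G_r}$. The same ``bottom wins'' argument applied pairwise gives $\dist{b_i}{b_j} = (j-i)(k+2)$ for $1 \le i < j \le k-1$ and $\dist{b_i}{c_k} = (k-i)(k+2) - 1$ for $i \in [k-1]$. Since two vertices $x, y$ are chain in $\overrightarrow{G_r}$ exactly when $\dist{x}{y} = |\dist{r_k}{x} - \dist{r_k}{y}|$, substituting gives $\dist{b_i}{b_j} - k(j-i) = 2(j-i) > 0$ and $\dist{b_i}{c_k} - k(k-i) = 2(k-i) - 1 > 0$, so every pair in $A$ is antichain. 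Therefore $|\anticp{r}{P^*}| \geq |A| = k$, which establishes the claim. The main obstacle is the systematic ruling out of hybrid top/bottom routes both in the isometricity of $P^*$ and in the distance formulas used for the antichain check; the redeeming observation is that every top/bottom switch incurs an extra cost of at least $k+1$ through a tail of some $F_j$, which overwhelmingly exceeds the per-segment saving of $1$ edge gained by using $Q_j$ in place of $P_j$.
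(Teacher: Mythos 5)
Your proposal is correct and follows essentially the same route as the paper: both take the ``bottom chain'' through $Q_2,\dots,Q_k$ as an isometric path of length $k^2+k-1$ (the paper uses endpoints $c_1,b_k$ and the antichain $\{b_1,\dots,b_k\}$, you use $a_1,a_k$ and $\{b_1,\dots,b_{k-1},c_k\}$) and conclude via \Cref{lem:ipac-ipco}. One small slip in your bookkeeping: each intermediate bottom segment ($Q_j$ plus a triangle edge) in fact costs \emph{two more} edges than the corresponding $P_j$, and the net saving of one edge over the top route comes entirely from replacing the two length-$k$ tails at the ends by single triangle edges; your distance formulas and the conclusion that track switches are strictly suboptimal nonetheless remain correct.
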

\begin{proof}
 Let $Q$ be the induced path between $b_k$ and $c_1$ induced by the vertices of $\left(\displaystyle\bigcup\limits_{i=2}^k V(Q_i)\right) \cup \left(\displaystyle\bigcup\limits_{i=1}^k \{b_j,c_j\}\right)$. The length of $Q$ is $k^2+k-1$. 
    We claim that $Q$ is an isometric path between $c_1$ and $b_k$. To prove this, consider a path $Q'$ between $c_1$ and $b_k$ that is different from $Q$. Notice that $Q'$ must contain at least one vertex from $R$ and let $i$ be the minimum index such that $r_i\in R$. Observe that $\dist{b_k}{r_i}\geq (k-i+1)k+1$ and $\dist{r_i}{c_1}\geq (i+1)k+1$. Therefore, the length of $Q'$ is at least $(k+2)k+2$, which is larger than the length of $Q$. Therefore, $Q$ is isometric and contains all edges in $\{b_ic_i\colon i\in [k]\}$.

    Now, observe that for each $i\in [k]$, $\dist{r}{b_i}=\dist{r}{c_i}$. Therefore, two distinct $r$-rooted isometric paths are needed to cover $b_i$ and $c_i$, for $i\in [k]$. Moreover, it is clear from the construction that for $i<j\in [k]$, there is no directed path from $b_j$ to $b_i$ nor from $b_i$ to $b_j$ in $\overrightarrow{G_r}$. The above arguments imply that $|\anticp{r}{Q}|\geq k$ and therefore $\ipco{G,r}\geq k$.     
\end{proof}

\noindent \textbf{Completion of the proof of \Cref{thm:even}.} Consider two disjoint copies $X, Y$ of the graph $\EvenH{k}$. Let $r_1$ and $r_2$ denote the vertices labeled $r_k$ in $X$ and $Y$, respectively. Now, add an edge between the vertices $r_1$ and $r_2$. Let $Z_k$ be the resulting graph. Observe that $Z_k$ is even-hole free, due to \Cref{lem:even-hole-free} and the fact that adding an edge between two disjoint even-hole free graphs results in an even-hole free graph. Let $v$ be any vertex of $Z_k$ and without loss of generality, $v\in V(X)$. Observe that, for any vertex $u\in V(Y)$, the $(u,v)$-isometric path contains the vertex $r_2$. Hence, for any isometric path $P$ that lies in $Y$, a set of $v$-rooted isometric paths that cover $P$, is also a set of $r_2$-rooted isometric paths. Hence, $\ipco{Z_k,v}\geq \ipco{Y, r_2} \geq k$.

\begin{remark}\label{rem:quasi-isometry}
The graphs constructed to prove \Cref{thm:even} can also be used to show that the property of bounded (strong) isometric path complexity is not preserved under quasi-isometry. Indeed, consider the graph $G_k$ and the $(c_1,b_k)$-isometric path that does not go through $r_1$. By appropriately subdividing some edges of this path, it is possible to obtain a graph $H$ which is quasi-isometric to $G$ but has constant (strong) isometric path complexity.
\end{remark}
\vspace{-0.25cm}

\section{Graph operations}\label{sec:graph-op}

In \Cref{sec:pow}, \Cref{sec:line} and \Cref{sec:clique} we prove \Cref{thm:line-power}\ref{it:pow}, \Cref{thm:line-power}\ref{it:line} and \Cref{thm:clique-sum}, respectively.
\subsection{Graph powers}\label{sec:pow}

Let $G$ be a graph with $\ipco{G}\leq k$ and $r\geq 2$ be a fixed integer. Let $H=G^r$. Now color the edges of $H$ as follows: an edge $e\in E(H)$ is \emph{black} if $e\in E(G)$; otherwise $e$ is \emph{red}. An isometric path $P$ of $H$ is \emph{red} if all edges of $P$ are red. 
{We will use the following simple observation.}

\begin{observation}\label{obs:iso:lb}
	Let $P$ be a red $(u,v)$-isometric path of length $\ell \geq 1$ in $H$. Then the distance between $u$ and $v$ in $G$ is at least $r(\ell-1)+1$.
\end{observation}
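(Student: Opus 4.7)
The plan is to invoke the standard identity $\mathsf{d}_H(x,y) = \lceil \distG{x}{y}/r \rceil$ that holds for every pair of vertices in $H = G^r$, and then read off the claim as a one-line consequence. I would establish the identity in two directions. First, a shortest $(x,y)$-path in $G$ of length $d = \distG{x}{y}$ can be sampled every $r$ steps (appending the endpoint if needed) to produce an $H$-path of length $\lceil d/r \rceil$, since consecutive samples are at $G$-distance at most $r$ and therefore adjacent in $H$. Conversely, every edge of $H$ lifts to a $G$-walk of length at most $r$, so any $H$-path of length $m$ from $x$ to $y$ expands into a $G$-walk of length at most $mr$, forcing $\distG{x}{y} \leq m r$ and hence $\lceil \distG{x}{y}/r \rceil \leq m$.

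Applying the identity to the path $P$: isometricity in $H$ gives $\ell = \mathsf{d}_H(u,v) = \lceil \distG{u}{v}/r \rceil$, from which the definition of the ceiling yields $\distG{u}{v} > r(\ell-1)$, i.e., $\distG{u}{v} \geq r(\ell-1)+1$, which is exactly the asserted bound.

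There is essentially no obstacle to overcome here; the statement is immediate from the definition of the $r$-th power of a graph. I note in passing that the ``red'' hypothesis on $P$ is not strictly needed for the stated inequality, since the argument above works for any isometric $(u,v)$-path in $H$. The condition that $P$ is red is presumably included because it is the form in which the observation is used later (where each step of $P$ corresponds to a $G$-distance of at least $2$), but I would not use it inside the proof of the observation itself.
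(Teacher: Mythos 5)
Your proof is correct; the paper states this observation without proof (calling it a ``simple observation''), and your argument via the standard identity $\mathsf{d}_{G^r}(x,y)=\lceil \distG{x}{y}/r\rceil$ is exactly the routine justification being omitted. Your side remark that the ``red'' hypothesis is not actually needed for the inequality is also accurate.
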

We prove the following lemma.

\begin{lemma}\label{lem:pow-2}
	The vertices of any $v$-rooted isometric path $Q$ of $G$ can be covered by $r$ many $v$-rooted isometric paths in $H$.
\end{lemma}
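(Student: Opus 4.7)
The plan is to take the $v$-rooted isometric path $Q = v = u_0, u_1, \ldots, u_\ell$ in $G$ and partition its non-root vertices according to their index modulo~$r$. Concretely, for each $j \in \{1, 2, \ldots, r\}$, I will define the path
\[
P_j = v,\, u_j,\, u_{j+r},\, u_{j+2r},\, \ldots,\, u_{j+k_j r}
\]
in $H$, where $k_j \geq 0$ is the largest integer with $j + k_j r \leq \ell$ (and $P_j$ is simply $v, u_j$ when $j \leq \ell$, or just $\{v\}$ if $j > \ell$). Every vertex $u_i$ with $i \geq 1$ has a unique representation $i = j + kr$ with $j \in \{1,\ldots,r\}$ and $k \geq 0$, and $u_0 = v$ is the common starting endpoint, so together the $P_j$ cover $V(Q)$.

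Next I must check that each $P_j$ is a $v$-rooted isometric path in $H$. First, consecutive vertices in $P_j$ are adjacent in $H$: the pair $(v, u_j)$ satisfies $\distG{v}{u_j} = j \leq r$ (as $Q$ is isometric in $G$), and for consecutive internal vertices, $\distG{u_{j+(m-1)r}}{u_{j+mr}} = r$, so both pairs are edges of $H = G^r$. Thus $P_j$ has exactly $k_j + 1$ edges. Second, to show it is isometric in $H$, I use the fact that any edge of $H$ corresponds to a pair of vertices at $G$-distance at most $r$, so for all $i$ one has $d_H(v, u_i) \geq \lceil i/r \rceil$; meanwhile the prefix of $P_j$ ending at $u_{j+mr}$ already realizes distance $m+1 = \lceil (j+mr)/r \rceil$ in $H$ (since $1 \leq j \leq r$). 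Hence every prefix of $P_j$ is an isometric $v$-rooted path in $H$, and in particular $P_j$ itself is isometric.

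The collection $\{P_j : 1 \leq j \leq r\}$ thus consists of at most $r$ many $v$-rooted isometric paths in $H$ whose union contains $V(Q)$, which is exactly what the lemma asserts. There is no real obstacle here; the only subtlety is verifying the tight distance estimate $d_H(v, u_i) = \lceil i/r \rceil$, for which the lower bound follows from the observation that traversing a single edge of $H$ costs at most $r$ in $G$ (a variant of \Cref{obs:iso:lb} applied from the root), and the upper bound is witnessed by the $P_j$ construction itself.
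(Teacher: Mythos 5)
Your proof is correct and follows essentially the same route as the paper: both decompose the non-root vertices of $Q$ into the $r$ residue classes of their index modulo $r$, attach the root $v$ to each resulting subsequence (using that $\distG{v}{u_j}\leq r$ for $j\in[r]$), and observe that the union covers $V(Q)$. The only difference is cosmetic — you spell out the isometry of each $P_j$ via the bound $d_H(v,u_i)\geq \lceil i/r\rceil$, which the paper leaves as an unproved ``observe''.
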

\begin{proof}
	Let the vertices of $Q$ be ordered as $v_1=v, v_2, \ldots, v_\ell$ as they are encountered while traversing $Q$ from $v$. Observe that for $i\in [r]$, the vertices in the set $P_i=\{v_j \colon j=i+1+xr,x\in \mathbb{N}, i+1+xr\leq \ell\}$ induce an isometric path in $H$. Since $v$ and $v_i$ with $i\in [2,r]$ are adjacent {in $H$}, we have that $Q_i=P_i\cup\{v\}$ is also an isometric path in $H$. Now, the set $\mathcal{Q}=\{Q_i\colon i\in [r]\}$ covers all the vertices of $Q$.
\end{proof}

\begin{lemma}\label{lem:black}
	Any isometric path of $H$ contains at most one black edge.
\end{lemma}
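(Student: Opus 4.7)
The plan is a short double-counting of the $G$-distance between the endpoints of the isometric path. Fix an isometric $(u_0,u_\ell)$-path $P = u_0, u_1, \ldots, u_\ell$ in $H = G^r$ of length $\ell \geq 1$, and write $b$ for the number of black edges appearing on $P$; the goal is to show $b \leq 1$.

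First I would record the standard relation between the metrics of $G$ and of $G^r$: for any $x,y \in V(G)$, the distance in $H$ equals $\lceil \distG{x}{y}/r \rceil$. Indeed, any shortest $G$-path between $x$ and $y$ can be chopped into $\lceil \distG{x}{y}/r \rceil$ consecutive subpaths of length at most $r$, each of which is an $H$-edge; conversely, every $H$-edge is realised by a $G$-walk of length at most $r$. Applying this to $P$ gives $\lceil \distG{u_0}{u_\ell}/r \rceil = \ell$, hence
\[
\distG{u_0}{u_\ell} \;\geq\; r(\ell - 1) + 1.
\]
This is the same kind of inequality as in \Cref{obs:iso:lb}, but obtained here without the ``all-red'' hypothesis.

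The matching upper bound comes from following $P$ itself inside $G$: for each edge $u_i u_{i+1}$, replace it by a shortest $G$-path. A black edge contributes exactly $1$, while each red edge contributes at most $r$ (its endpoints are adjacent in $G^r$, so at $G$-distance at most $r$). Summing yields
\[
\distG{u_0}{u_\ell} \;\leq\; b + r(\ell - b) \;=\; r\ell - b(r-1).
\]
Combining the two inequalities gives $b(r-1) \leq r - 1$, and since $r \geq 2$ we conclude $b \leq 1$, as required. I expect no genuine obstacle here; the only subtlety is the (standard) formula $d_H(\cdot,\cdot) = \lceil \distG{\cdot}{\cdot}/r \rceil$ for distances in graph powers, which has to be invoked cleanly.
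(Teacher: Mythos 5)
Your proof is correct, but it takes a genuinely different route from the paper. The paper argues by contradiction locally: it picks two black edges with an all-red subpath between them, replaces each red edge $x_ix_{i+1}$ of that subpath by a shortest $G$-path and collects the neighbours $y_i$ of the $x_i$ on those paths, and then checks that $u_1,y_1,\dots,y_{\ell-1},v_2$ is a strictly shorter $H$-path, contradicting isometricity. You instead do a global double count of $\distG{u_0}{u_\ell}$: the exact formula $\mathsf{d}_{G^r}(x,y)=\lceil \distG{x}{y}/r\rceil$ gives the lower bound $r(\ell-1)+1$ (a strengthening of \Cref{obs:iso:lb}, which the paper only states under the all-red hypothesis, though the same proof applies in general), while tracing $P$ inside $G$ gives the upper bound $b+r(\ell-b)$, and the two bounds force $b(r-1)\le r-1$, hence $b\le 1$ since $r\ge 2$. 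Your argument is shorter, avoids the explicit shortcut construction, and yields slightly more information (the total ``slack'' $r\ell-\distG{u_0}{u_\ell}$ of an isometric path in $G^r$ is at most $r-1$, so at most one edge can lose the full $r-1$); the paper's construction, on the other hand, is the same device it reuses in \Cref{lem:isometric-cover-power}, so it keeps the section self-contained around one technique. Both proofs are valid.
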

\begin{proof}
	Assume for contradiction that there is a path $P$ in $H$ that has at least two black edges $u_1v_1$ and $u_2v_2$. Let the vertices $u_1,v_1, u_2,v_2$ appear in that order while traversing $P$ from one end-vertex to the other. Also, assume that the subpath $P'$ of $P$ between $u_1$ and $v_2$ does not contain any other black edge. Clearly, $u_2\neq v_1$. (Otherwise, the distance between $u_1$ and $v_2$ is at most~2 in $G$, which implies that $u_1,v_2$ would be adjacent in $H$, contradicting that $P$ is isometric.) Hence, $P'$ has at least one edge, and $P'$ is a red isometric path in $H$ with $v_1$ and $u_2$ as its end-vertices. Let $x_1=v_1, x_2, \ldots, x_\ell=u_2$ be the vertices of $P'$ as they are encountered while traversing $P'$ from $v_1$ to $u_2$. For each $i\in [\ell-1]$, let $P_i$ denote an $(x_i,x_{i+1})$-isometric path of length at most $r$ in $G$. (Since $x_ix_{i+1}$ is a red edge, such a path always exists.) For each $i\in [\ell-1]$, let $y_i$ denote the vertex of $P_i$ that is adjacent to $x_i$. Observe that, for each $i\in [\ell-2]$, $y_i$ is adjacent to $y_{i+1}$ in $H$. Moreover, $u_1$ is adjacent to $y_1$ in $H$ and $y_{\ell-1}$ is adjacent to $v_2$ in $H$. Hence, $u_1,y_1,y_2,\ldots,y_{\ell-1},v_2$ induces a path of length $\ell$ in $H$. But this contradicts the fact that $u_1,v_1=x_1,\ldots,x_{\ell}=v_2,u_2$ induces an isometric subpath of $P$ with length $\ell+1$. 
\end{proof}

\begin{lemma}\label{lem:isometric-cover-power}
	The vertices of any red isometric path of $H$ can be covered by $2r-1$ many isometric paths of $G$.
\end{lemma}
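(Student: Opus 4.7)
The plan is to analyze the sequence of $G$-distances $d_i := d_G(x_0, x_i)$ along the given red $H$-isometric path $P = x_0 x_1 \cdots x_\ell$. Every prefix of $P$ is itself a red isometric path of $H$, so applying \Cref{obs:iso:lb} to the prefix of length $i$ yields $d_i \geq r(i-1) + 1$ for all $i \geq 1$. On the other hand, concatenating shortest $(x_j, x_{j+1})$-paths in $G$ (each of length at most $r$, since $x_j x_{j+1} \in E(H)$) gives the triangle-inequality bound $d_i \leq ri$. In particular each consecutive difference $d_{i+1} - d_i$ lies in $[1, r]$, so the sequence $(d_i)$ is strictly increasing.

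Summing telescopically yields the crucial slack bound
$\sum_{i=0}^{\ell-1}\bigl(r - (d_{i+1}-d_i)\bigr) = r\ell - d_\ell \leq r-1$,
where the last inequality is again \Cref{obs:iso:lb}. Hence the ``breakpoint set'' $T := \{\, i \in \{0,\ldots,\ell-1\} : d_{i+1} - d_i < r\,\}$ satisfies $|T| \leq r-1$. The breakpoints partition the index set $\{0, 1, \ldots, \ell\}$ into at most $|T|+1 \leq r$ maximal consecutive runs $[p, q]$ inside each of which every difference $d_{i+1}-d_i$ equals $r$.

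I then produce one isometric $G$-path per run. Fix such a run $[p, q]$. For every $i \in \{p, \ldots, q-1\}$ the two-sided bound $r = d_{i+1} - d_i \leq d_G(x_i, x_{i+1}) \leq r$ forces $d_G(x_i, x_{i+1}) = r$, so I can pick an isometric $(x_i, x_{i+1})$-path $R_i$ of length exactly $r$ in $G$. The concatenation $W = R_p R_{p+1} \cdots R_{q-1}$ is a walk from $x_p$ to $x_q$ of length $r(q-p)$. Telescoping the run differences gives $d_q - d_p = r(q-p)$, and combining this with the triangle inequality forces $d_G(x_p, x_q) = r(q-p) = |W|$. A walk whose length equals the $G$-distance between its endpoints cannot repeat a vertex (any repetition would let one shortcut to a strictly shorter walk), so $W$ is a simple path, and since its length equals $d_G(x_p, x_q)$ it is in fact an isometric path of $G$ containing every vertex $x_p, x_{p+1}, \ldots, x_q$ by construction.

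Assembling one such path per run covers $\{x_0, \ldots, x_\ell\}$ by at most $r \leq 2r - 1$ isometric paths of $G$, which is the desired bound (and actually a slight improvement over the stated $2r-1$). The only mild subtlety is verifying that each concatenated walk $W$ is genuinely a simple (hence isometric) path; this is forced by the length-matches-distance computation. Everything else is bookkeeping on how the total slack $r\ell - d_\ell \leq r-1$ distributes across the $\ell$ differences and bounds the number of runs.
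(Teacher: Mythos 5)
Your proof is correct, and it takes a genuinely different route from the paper's. The paper also starts from \Cref{obs:iso:lb}, but it first concatenates per-edge shortest paths into one long walk $Q$ and then decomposes $Q$ into a maximal collection of edge-disjoint non-isometric subpaths plus the leftover components: the observation bounds the number of non-isometric pieces by $r-1$ (each such piece can be shortcut by at least one edge), giving $2(r-1)+1=2r-1$ pieces in total, after which the non-isometric pieces still have to be trimmed to become isometric. You instead analyse the distance profile $d_i=\dist{x_0}{x_i}$ along the red path itself: the telescoping identity $\sum_{i}(r-(d_{i+1}-d_i))=r\ell-d_\ell\leq r-1$ bounds the number of ``breakpoint'' increments below $r$ by $r-1$, and within each of the at most $r$ maximal runs between breakpoints every consecutive pair is at $G$-distance exactly $r$, so the concatenation of the chosen shortest paths has length equal to $d_q-d_p\leq \dist{x_p}{x_q}$ and is therefore itself a shortest (hence simple, isometric) path covering the whole run. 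The payoff is twofold: your decomposition produces only isometric pieces directly, avoiding the paper's trimming step, and it yields the stronger bound of $r$ isometric paths rather than $2r-1$ (which of course suffices for the lemma as stated, since $r\leq 2r-1$). All the individual steps check out: prefixes of a red isometric path are red isometric paths, so $d_i\geq r(i-1)+1$; the increments lie in $[1,r]$; and a walk whose length equals the distance between its endpoints cannot repeat a vertex.
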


\begin{proof}
	Let $P$ be a red isometric path of $H$ with $|E(P)| = \ell$. Let $u_1,u_2,\ldots,u_{\ell+1}$ be the vertices of $P$ ordered as they are traversed from one end-vertex of $P$ to the other. Observe that the distance in $G$ between consecutive vertices of $P$ is at most $r$. For each $i\in [\ell]$, let $e_i=u_{i}u_{i+1}$ and $Q_i$ denote an $(u_i,u_{i+1})$-isometric path in $G$. Finally, let $Q$ be the path in $G$ obtained by concatenating $Q_1,Q_2,\ldots,Q_{\ell}$. Observe that $\forall i\in [\ell], |E(Q_i)| \leq r$ and therefore $|E(Q)|\leq \ell\cdot r$

	\medskip	Let $\mathcal{Z}$ be a maximal collection of edge-disjoint non-isometric subpaths of $Q$, and $\overline{\mathcal{Z}}$ is the set of connected components in the graph obtained by removing all paths in $\mathcal{Z}$ from $Q$. 
	Let $\mathcal{Y}=\mathcal{Z}\cup \overline{\mathcal{Z}}$. We shall also use the following. 
	
	\begin{claim}\label{clm:iso}
		Every path $X\in \overline{\mathcal{Z}}$ is an isometric path in $G$.
	\end{claim}
	
	The definitions imply that any two paths in $\mathcal{Y}$ are edge disjoint. Hence, the paths in $\mathcal{Y}$ can be ordered as $Y_1,Y_2,\ldots,Y_p$ such that for $1\leq i <j\leq p$ all edges of $Y_i$ appear before those of $Y_j$ while traversing from one end-vertex of $Q$ to the other.  
	Hence, one end-vertex of $Y_{p}$ must be $u_{\ell+1}$. Let the two end-vertices of $Y_i$ be denoted as $a_i$ and $b_i$. Note that $a_{i+1}=b_i$ and without loss of generality we can assume that $a_1=u_1$, and $b_p=u_{\ell+1}$. Note that $\displaystyle\sum\limits_{i=1}^p |E(Y_i)|=|E(Q)| \leq \ell\cdot r$.
	Using the above notations, we prove the following claim.
	
	\begin{claim}\label{clm:upper_bound}
		$|\mathcal{Y}|\leq 2r-1.$
	\end{claim} 
	\begin{subproof}
		Observe that $\left| \overline{\mathcal{Z}} \right| \leq \left| \mathcal{Z} \right|+1 $. Hence proving $|\mathcal{Z}|\leq r-1$ is enough to prove \Cref{clm:upper_bound}. Assume for contradiction that $|\mathcal{Z}|\geq r$. 
		Let $X_i$ denote an $(a_i,b_i)$-isometric path in $G$ and let $Q'$ be the path between $u_1$ and $u_{\ell+1}$ obtained by concatenating $X_1,X_2,\ldots,X_p$. Observe that for each $i\in [p]$, $Y_i\in \overline{\mathcal{Z}}$, then due to \Cref{clm:iso}, $|E(X_i)|= |E(Y_i)|$. On the other hand, if for each $i\in [p]$, $Y_i\in \mathcal{Z}$, then $|E(X_i)|\leq |E(Y_i)|-1$. Hence,
		\begin{align*}
			|E(Q')| & = \displaystyle\sum\limits_{Y_i\in \overline{\mathcal{Z}}} |E(X_i)| +  \displaystyle\sum\limits_{Y_i\in \mathcal{Z}} |E(X_i)| \\
			& \leq \displaystyle\sum\limits_{Y_i\in \overline{\mathcal{Z}}} |E(Y_i)| + \displaystyle\sum\limits_{Y_i\in \mathcal{Z}} \left(|E(Y_i)|-1\right) \\
			& \leq \displaystyle\sum\limits_{Y_i\in \overline{\mathcal{Z}}} |E(Y_i)| + \displaystyle\sum\limits_{Y_i\in \mathcal{Z}} |E(Y_i)|-r \\
			& \leq \displaystyle\sum\limits_{i=1}^p |E(Y_i)| - r \\
			& \leq r(\ell-1)
		\end{align*}
		Hence $Q'$ is a path between $u_1$ and $u_{\ell+1}$ of length at most $r(\ell-1)$. Hence $\dist{u_1}{u_{\ell+1}}$ is at most $r(\ell-1)$. However, this contradicts \Cref{obs:iso:lb}.
	\end{subproof}
	
	Now for each $Y_i\in \overline{\mathcal{Z}}$, define $Y'_i=Y_i$ and for each $Y_i\in \mathcal{Z}$, define $Y'_i=Y_i \setminus \{b_i\}$. 
	Observe that each path in $\mathcal{Y}'=\{Y'_i\colon Y_i\in \mathcal{Z}\} \cup \{Y'_i\colon Y_i\in \overline{\mathcal{Z}}\}$ 
	is an isometric path in $G$. Due to \Cref{clm:upper_bound}, $|\mathcal{Y}'|\leq 2r-1$. 
	Also, $V(P)\subseteq V(Q)\subseteq \displaystyle\bigcup\limits_{Y\in \mathcal{Y}'} V(Y)$. Therefore, the vertices of $P$ can be covered by $2r-1$ many isometric paths in $G$.
\end{proof}

\smallskip\noindent\textbf{Completion of the proof of \Cref{thm:line-power}\ref{it:pow}.} Let $G$ be a graph and $H$ be the $r^{th}$ power of $G$. Let $v$ be a vertex of $G$, and $k=\ipcor{v}{G}$. Let $P$ be an isometric path of $H$. 
Due to \Cref{lem:black}, $P$ contains at most one black edge $e$. Let $P_1$ and $P_2$ be the two connected components obtained by removing $e$ from $P$. Note that both $P_1$ and $P_2$ are red isometric paths. Due to \Cref{lem:isometric-cover-power}, for each $i\in \{1,2\}$, the vertices of $P_i$ can be covered by $2r-1$ isometric paths of $G$. 
Hence, the vertices of $P$ can be covered by $4r-2$ isometric paths of $G$. This implies that the vertices of $P$ can be covered by $k(4r-2)$ many $v$-rooted isometric paths of $G$. Thus, \Cref{lem:pow-2} implies that {$P$ can be covered by} $k(4r^2-2r)$ $v$-rooted isometric paths of $H$. The above arguments imply $\sipco{H}\leq (4r^2-2r)\sipco{G}$ and $\ipco{H}\leq (4r^2-2r)\ipco{G}$.

\subsection{Line graphs}\label{sec:line} 

In this section, we show that the (strong) isometric path complexity of the line graph of a graph $G$ is upper bounded by a constant factor of the (strong) isometric path complexity of $G$. First, we prove some lemmas.

\begin{lemma}\label{lem:line-original}
        Let $G$ be a graph and $P=e_1 e_2 \ldots e_{k}$ with $k\geq 2$ be an isometric path in $\linegraph{G}$. Then $e_2, e_3, \ldots,e_{k-1}$ induces an isometric path in $G$.
\end{lemma}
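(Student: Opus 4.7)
The plan is to identify, for each adjacent pair $e_i, e_{i+1}$ in the $L(G)$-path $P$, the unique shared endpoint in $G$, and then show that the resulting walk in $G$ is in fact a shortest path (hence induced, hence isometric) by using the isometry of $P$ in $L(G)$.

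First I would set up notation. For each $i \in \{1,\dots,k-1\}$, the vertices $e_i, e_{i+1}$ are adjacent in $L(G)$, so $e_i \cap e_{i+1}$ is nonempty; and since they are distinct vertices of $L(G)$ (consecutive on a path), they cannot share two endpoints. So there is a unique vertex $v_i \in V(G)$ with $e_i \cap e_{i+1} = \{v_i\}$. A small but crucial subclaim is that $v_i \neq v_{i+1}$ for $i = 1,\dots,k-2$: otherwise $e_i, e_{i+1}, e_{i+2}$ would all contain this common vertex, making $e_i$ and $e_{i+2}$ adjacent in $L(G)$ and contradicting $d_{L(G)}(e_i, e_{i+2}) = 2$. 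Consequently, $e_{i+1} = v_i v_{i+1}$ in $G$ for each $i = 1,\dots,k-2$, so the edges $e_2, e_3, \dots, e_{k-1}$ form a walk $W \colon v_1, v_2, \dots, v_{k-1}$ in $G$ of length $k-2$.

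Next I would show that $W$ is a shortest $v_1$-$v_{k-1}$ path in $G$. Suppose for contradiction that $d_G(v_1, v_{k-1}) \leq k-3$, and let $w_1 = v_1, w_2, \dots, w_m = v_{k-1}$ be a shortest $(v_1, v_{k-1})$-path in $G$ with $m - 1 \leq k - 3$. Consider the sequence
\[
e_1,\; w_1 w_2,\; w_2 w_3,\; \ldots,\; w_{m-1} w_m,\; e_k
\]
in $V(L(G))$. Consecutive entries share a vertex in $G$: $e_1$ and $w_1 w_2$ share $v_1 = w_1$; each $w_i w_{i+1}$ and $w_{i+1} w_{i+2}$ share $w_{i+1}$; and $w_{m-1} w_m$ and $e_k$ share $v_{k-1} = w_m$. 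So this is a walk in $L(G)$ from $e_1$ to $e_k$ of length $(m-1) + 1 = m \leq k - 2$, contradicting $d_{L(G)}(e_1, e_k) = k - 1$. Hence $d_G(v_1, v_{k-1}) = k - 2$, which forces $W$ to be a shortest walk, hence a simple path with pairwise distinct vertices, hence an isometric path in $G$. Being isometric, it is also induced (any chord would yield a shorter path), so the edges $e_2, \dots, e_{k-1}$ induce an isometric path in $G$.

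The only real obstacle is the $v_i \neq v_{i+1}$ step, and it is addressed directly by the $L(G)$-distance-2 condition on $e_i, e_{i+2}$; the remainder is a clean translation between walks in $L(G)$ and walks in $G$. The cases $k = 2$ (vacuous) and $k = 3$ (single edge) are immediate.
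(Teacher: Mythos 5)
Your proof is correct and takes essentially the same route as the paper: assume a shorter path in $G$ between the two end-vertices of the path formed by $e_2,\ldots,e_{k-1}$, lift it to a too-short walk in $\linegraph{G}$, and contradict the isometry of $P$. Your version is in fact a bit more careful than the paper's write-up: you explicitly verify that $e_2,\ldots,e_{k-1}$ really form a path in $G$ (the $v_i\neq v_{i+1}$ observation), and by anchoring the lifted walk at $e_1$ and $e_k$ (using $v_1\in e_1$, $v_{k-1}\in e_k$) you obtain length at most $k-2$ against distance $k-1$, which makes the length count give a strict contradiction, whereas the paper anchors at $e_2$ and $e_{k-1}$.
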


\begin{proof}
Let $Q$ be the path induced by the edges $e_2, e_3, \ldots,e_{k-1}$ in $G$ and let $u,v$ be the end-vertices of $Q$. If $Q$ is not an isometric path, then another $(u,v)$-path $Q'$ in $G$ of length at most $k-3$ exists. Let $e'_1,\ldots,e'_{t}$ with $t\leq k-3$ be the edges of $Q'$ and notice that $u$ (resp. $v$) is an end-vertex of $e'_1$ (resp. $e'_{t}$). Then $e_2 e'_1\ldots e'_{t} e_{k-1}$ is a shorter path in $\linegraph{G}$, a contradiction. 
\end{proof}

\begin{lemma}\label{lem:original-line}
Let $P$ be an isometric path in $G$. Then $\linegraph{P}$ is an isometric path in $\linegraph{G}$.
\end{lemma}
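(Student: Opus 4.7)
The plan is to let $P = v_0 v_1 \ldots v_k$ be the isometric path in $G$, write $e_i = v_{i-1}v_i$ for the edges of $P$ so that $L(P) = e_1 e_2 \ldots e_k$. Consecutive edges $e_i, e_{i+1}$ share $v_i$, so $L(P)$ is a path of length $k-1$ in $\linegraph{G}$. The only thing to check is that this path is isometric, \ie\ that $\dist{e_1}{e_k} = k-1$ in $\linegraph{G}$.

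I would proceed by contradiction: assume that $\dist{e_1}{e_k} = t$ in $\linegraph{G}$ with $t < k-1$, and pick a shortest $\linegraph{G}$-path $e_1 = f_0, f_1, \ldots, f_t = e_k$. For each $j \in \{1, \ldots, t\}$ choose a shared vertex $u_j \in V(f_{j-1}) \cap V(f_j)$ (which exists because $f_{j-1}f_j \in E(\linegraph{G})$). Since $f_0 = e_1$ and $f_t = e_k$, we have $u_1 \in \{v_0, v_1\}$ and $u_t \in \{v_{k-1}, v_k\}$.

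The key observation is that the sequence $u_1, u_2, \ldots, u_t$ is a walk in $G$ of length at most $t-1$. Indeed, for each $j \in \{1, \ldots, t-1\}$ the vertices $u_j$ and $u_{j+1}$ both lie on the edge $f_j$ of $G$, so either $u_j = u_{j+1}$ or $u_j u_{j+1} = f_j \in E(G)$. In either case, taking the $t-1$ one-step transitions from $u_1$ to $u_t$ produces a walk (using only edges from $\{f_1, \ldots, f_{t-1}\}$), giving $\dist{u_1}{u_t} \leq t - 1$ in $G$.

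On the other hand, because $P$ is isometric, $\dist{v_i}{v_j} = |i - j|$ for all $i, j \in \{0, \ldots, k\}$, hence every pair of vertices in $\{v_0, v_1\} \times \{v_{k-1}, v_k\}$ is at distance at least $k-2$ (the minimum $k-2$ being realised by $(v_1, v_{k-1})$). Thus $\dist{u_1}{u_t} \geq k - 2$, and combining yields $k - 2 \leq t - 1 < k - 2$, a contradiction. I expect the only mildly delicate point is to handle the case where two consecutive $u_j$'s coincide (which is why I phrase the bound as ``walk of length \emph{at most} $t-1$'' rather than exactly $t-1$); everything else is a direct application of the isometry of $P$.
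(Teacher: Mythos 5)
Your proof is correct. It takes the same overall contradiction strategy as the paper (suppose a shorter $(e_1,e_k)$-path exists in $\linegraph{G}$ and derive a too-short connection in $G$), but the mechanism for transferring the short $\linegraph{G}$-path back down to $G$ is genuinely different. The paper invokes its preceding lemma (that the internal edges of an isometric path in $\linegraph{G}$ induce an isometric path in $G$), applies it to a shortest $(e_1,e_k)$-path, and then re-attaches $e_1$ and $e_k$ to produce a $(u,v)$-path in $G$ of length at most $k-2$. You instead pick, for each consecutive pair $f_{j-1},f_j$ of the short $\linegraph{G}$-path, a shared endpoint $u_j$, and observe that $u_1,\ldots,u_t$ is a walk of length at most $t-1$ in $G$ joining a vertex of $\set{v_0,v_1}$ to a vertex of $\set{v_{k-1},v_k}$; comparing with $\dist{v_1}{v_{k-1}}=k-2$ gives the contradiction. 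Your version is self-contained and avoids any dependence on the other lemma (and in particular sidesteps the paper's slightly careless bookkeeping there, where the length of the projected path is stated as $k-4$ rather than $t-2\leq k-4$); the paper's version buys brevity by reusing machinery it has already established. Your handling of the degenerate steps (consecutive $u_j$'s coinciding, hence ``walk of length at most $t-1$'') is exactly the right care to take, and the remaining trivial case $k=1$ is immediate.
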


\begin{proof}
Let the edges of $P$ be $e_1, e_2,\ldots, e_k$ and the two end-vertices of $P$ are $u,v$. Then $\linegraph{P}$ has length $k-1$ in $\linegraph{G}$. If $\linegraph{P}$ is not an isometric path, then there exists a $(e_1,e_k)$-path in $\linegraph{G}$ that can be written as $e_1=e'_1 e'_2 \ldots e'_t=e_k$ such that $t\leq k-2$. Then by \Cref{lem:line-original}, we have that the edges $e'_2,\ldots,e'_{t-1}$ induce an isometric path $Q$ in $G$ of length $k-4$. Now, including $e_1$ and $e_k$ creates a $(u,v)$-path of length at most $k-2$, a contradiction.
\end{proof}

\begin{lemma}\label{lem:line-decompose}
    Let $G$ be a graph, $P$ be an $(u,v)$-isometric path in $G$ and $e_0$ be any edge incident to $u$ in $G$. Then, the vertices of $\linegraph{P}$ can be covered by two $e_0$-rooted isometric paths in $\linegraph{G}$.   
\end{lemma}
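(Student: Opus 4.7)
Label the edges of $P$ in order as $e_1,\ldots,e_k$ starting from $u$, and write $e_0=uw$. If $e_0=e_1$, then $\linegraph{P}$ is already an $e_0$-rooted isometric path by \Cref{lem:original-line}, so a single path suffices. Henceforth I assume $e_0\neq e_1$; in particular $e_0$ and $e_1$ are adjacent in $\linegraph{G}$ (they share $u$), and one also checks that $e_0\neq e_j$ for every $j\in\{1,\ldots,k\}$, because $e_0$ is incident to $u$ while $e_j$ is not for $j\geq 2$ (using that $P$ is isometric, so $u$ is not adjacent to any $u_i$ with $i\geq 2$).

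The key ingredient is the two-sided estimate
\[
  j-1 \;\leq\; \mathsf{d}_{\linegraph{G}}(e_0,e_j) \;\leq\; j
  \qquad\text{for every } j\in\{1,\ldots,k\}.
\]
The upper bound is immediate from the walk $e_0 e_1 \cdots e_j$ of length $j$ in $\linegraph{G}$. For the lower bound I would use the standard observation that any $(e_0,e_j)$-walk in $\linegraph{G}$ of length $\ell$ produces a walk in $G$ of length at most $\ell-1$ between some endpoint of $e_0$ and some endpoint of $e_j$; combining this with the fact that both endpoints of $e_j$ lie at distance at least $j-1$ from $u$ in $G$ (since $P$ is isometric), one obtains $\ell\geq j-1$.

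Let $m\in\{1,\ldots,k\}$ be the largest index with $\mathsf{d}_{\linegraph{G}}(e_0,e_m)=m$; this exists because $m=1$ always satisfies it. Take as the first path $Q_1=e_0 e_1\cdots e_m$: it is a walk of length $m$ equalling $\mathsf{d}_{\linegraph{G}}(e_0,e_m)$, hence an $e_0$-rooted isometric path, and it covers $e_1,\ldots,e_m$. If $m=k$ we are done. Otherwise, by maximality of $m$ and the estimate above, $\mathsf{d}_{\linegraph{G}}(e_0,e_{m+1})=m$. Pick any shortest $(e_0,e_{m+1})$-path $R$ in $\linegraph{G}$ (of length $m$) and let $Q_2$ be the concatenation $R\cdot e_{m+1}e_{m+2}\cdots e_k$, of total length $m+(k-m-1)=k-1$.

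The main obstacle is to verify that $Q_2$ is a simple isometric path. Isometry is cheap: the lower bound of the estimate gives $\mathsf{d}_{\linegraph{G}}(e_0,e_k)\geq k-1$, and $Q_2$ realises this length. Simplicity is the delicate point: one must exclude the possibility that some $e_j$ with $j\geq m+2$ also appears as an internal vertex of $R$. If it did, $e_j$ would sit at some position $\ell\leq m-1$ along $R$, forcing $\mathsf{d}_{\linegraph{G}}(e_0,e_j)\leq m-1$, which contradicts the lower bound $\mathsf{d}_{\linegraph{G}}(e_0,e_j)\geq j-1\geq m+1$. Since $Q_1$ covers $e_1,\ldots,e_m$ and $Q_2$ covers $e_{m+1},\ldots,e_k$, together the two $e_0$-rooted isometric paths $Q_1,Q_2$ cover all vertices of $\linegraph{P}$.
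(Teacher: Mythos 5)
Your proof is correct and follows essentially the same strategy as the paper's: split $\linegraph{P}$ at the last index $m$ for which the prefix $e_0e_1\cdots e_m$ is isometric in $\linegraph{G}$, keep that prefix as the first path, and obtain the second path by rerouting from $e_0$ to $e_{m+1}$ along a genuine shortest path and then continuing along $\linegraph{P}$ to $e_k$. The only difference is in the bookkeeping: where the paper invokes \Cref{lem:line-original} to pull a hypothetical shorter $(e_0,e_k)$-path back into $G$ and contradict the isometry of $P$, you derive the same conclusion from the direct estimate $j-1\leq \mathsf{d}_{\linegraph{G}}(e_0,e_j)\leq j$, which also gives you simplicity of the second path for free.
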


\begin{proof}
Let the edges of $P$ be ordered as $e_1,e_2,\ldots,e_k$ as they are traversed starting from $u$. We also assume that the end-vertices of $e_i$ are $u_{i}$ and $u_{i+1}$ for $i\in [k]$ where $u$ is closer to $u_{i}$ than $u_{i+1}$. Since $\linegraph{P}$ is an isometric path in $\linegraph{G}$ (due to \Cref{lem:original-line}), if $e_1=e_0$, then the statement is true. Now assume $e_1\neq e_0$. Observe that if $e_0$ shares an end-vertex with any edge $e_j$ with $j\geq 2$, then $P$ is not an isometric path in $G$. Now, assume that $i$ is the minimum index such that the path induced by $\{e_0, e_1, \ldots e_i\}$ is not an isometric path in $\linegraph{G}$. From previous arguments, it follows that $i\geq 3$. 

Let $Q=e_0 e'_1 \ldots e'_{t} e_i$ be an $(e_0,e_i)$-isometric path in $\linegraph{G}$. Observe that $t\leq i-2$ and by \Cref{lem:line-original}, the path induced in $G$ by the end-vertices of the edges $e'_1,\ldots,e'_t$ is an isometric path $P'$ in $G$. If the common vertex of $e'_{t}$ and $e_i$ is $u_{i+1}$, then by appending $u$ with $P'$, it is possible to construct an $(u,u_{i+1})$-isometric path of length at most $i-1$ in $G$, which contradicts that $P$ is an isometric path. Hence, the common vertex of $e'_{t}$ and $e_i$ is $u_{i}$. Let $Q'$ denote the path $e_0 e'_1\ldots e'_t e_i e_{i+1}\ldots e_k$ in $\linegraph{G}$ and observe that the length of $Q'$ is at most $k-1$. 

We argue that $Q'$ is an isometric path in $\linegraph{G}$. If not, then there exists another isometric path $Q''=e_0 e''_1 \ldots e''_{x} e_k$ in $\linegraph{G}$ such that $x\leq k-3$. Due to \Cref{lem:line-original}, we know that the end-vertices of the edges $e''_1,\ldots,e''_{x}$ induce an isometric path $P''$ in $G$. By including $u$ and $u_{k+1}$ to $P''$ it is possible to construct a $(u,u_{k+1})$-path in $G$ whose length is at most $k-1$. This is a contradiction. 

Finally, define $P'=e_0e_1\ldots, e_{i-1}$. The definition of $i$ implies that $P'$ is an isometric path in $\linegraph{G}$. Observe that $P'$ and $Q'$ are two $e_0$-rooted isometric paths in $\linegraph{G}$ that cover $\linegraph{P}$.
\end{proof}

\begin{figure}
    \centering
    \begin{tabular}{cc}
         \begin{tikzpicture}

          \foreach \x/\y [count = \n] in {
          0/0.5,0/0, 0/-0.5, 0/-1, 0/-1.5, 0/-2, 
          -0.5/-0.5, -0.5/-1, -0.5/-1.5, -0.5/-2,
          -1/-0.5, -1/-1,-1/-1.5, -1/-2,
          0.5/-0.5, 0.5/-1, 0.5/-1.5, 0.5/-2,
          1/-0.5,1/-1, 1/-1.5, 1/-2, 
          -0.75/-2.25, -0.25/-2.25, 0.25/-2.25, 0.75/-2.25, 1.25/-2.25
          }
    {
          \filldraw (\x, \y) circle (1pt);
    	 
    }

    \draw[thick] (0,0.5) -- (0,0) -- (0.5,-0.5) -- (0.5,-2);
    \draw[thick] (0,0) -- (-0.5,-0.5) -- (-0.5,-2);
    \draw[thick] (0,0) -- (-1,-0.5) -- (-1,-2);
    \draw[thick] (0,0) -- (1,-0.5) -- (1,-2);
    \draw[thick] (0,0) -- (0,-0.5) -- (0,-2);

    \draw  (-1,-2) -- (-0.75,-2.25) --  (-0.5,-2) -- (-0.25,-2.25) -- (0,-2) -- (0.25,-2.25) -- (0.5,-2) -- (0.75,-2.25)-- (1,-2) -- (1.25,-2.25) ;

    \node[left] at (0,0.1) {\scriptsize $r$}; 
    \node[left] at (0,0.5) {\scriptsize $r_0$}; 
    \foreach \x/\y [count = \n] in {
          -1/-2,-0.45/-1.9, 0.05/-1.9, 0.55/-1.9, 1.05/-1.9
          }
    {
          \node[left] at (\x,\y) {\scriptsize $b_{\n}$};
    	 
    }

    \foreach \x/\y [count = \n] in {
          -1/-1.5,-0.45/-1.5, 0.05/-1.5, 0.55/-1.5, 1.05/-1.5
          }
    {
          \node[left] at (\x,\y) {\scriptsize $b'_{\n}$};
    	 
    }

 \foreach \x/\y [count = \n] in {
          -1/-0.52,-0.45/-0.52, 0.05/-0.52, 0.55/-0.52, 1.05/-0.52
          }
    {
          \node[left] at (\x,\y) {\scriptsize $a_{\n}$};
    	 
    }

    \foreach \x/\y [count = \n] in {
          -0.75/-2.25,-0.25/-2.25, 0.25/-2.25, 0.75/-2.25, 1.25/-2.25
          }
    {
          \node[below] at (\x,\y) {\scriptsize $c_{\n}$};
    	 
    }

       \end{tikzpicture} & \begin{tikzpicture}
           \foreach \x/\y [count = \n] in {0/0, 0/0.5, -0.5/0, -1/0, 0.5/0, 1/0, -1/-0.5 , -0.5/-0.5, 0/-0.5, 0.5/-0.5, 1/-0.5, -1/-1 , -0.5/-1, 0/-1, 0.5/-1, 1/-1, -1.5/-1.5 , -1/-1.5, 0/-1.5, 1/-1.5, 1.5/-1.5, -1.75/-2, -1.25/-2, -0.75/-2, -0.25/-2, 1.75/-2, 1.25/-2, 0.75/-2, 0.25/-2, 2.25/-2 }
    {
          \filldraw (\x, \y) circle (1pt);
    	
    }

     \foreach \x/\y [count = \n] in {-1.75/-2, -1.25/-2,  -0.25/-2, 0.75/-2, 1.75/-2}
    {
          \node[below] at (\x,\y) {$e_{\n}$};
    	
    }

    \node [below] at (-0.75,-2) { $e'_2$ };
    \node [below] at (0.25,-2) { $e'_3$ };
    \node [below] at (1.25,-2) { $e'_4$ };
    \node [below] at (2.25,-2) { $e'_5$ };
    
    \foreach \x/\y [count = \n] in {
          -1.5/-1.5,-0.45/-1.5, 0.05/-1.5, 1.05/-1.5, 2/-1.5
          }
    {
          \node[left] at (\x,\y) {\scriptsize $f_{\n}$};
    	 
    }

    \foreach \x/\y/\w/\z [count = \n] in {0/0.5/-1/-0, 0/0.5/-0.5/0, 0/0.5/0/0, 0/0.5/0.5/0,0/0.5/1/0, -1/0/-1/-1, -0.5/0/-0.5/-1,0/0/0/-1, 0.5/0/0.5/-1, 1/0/1/-1, -1/-1/-1.5/-1.5, -0.5/-1/-1/-1.5, 0/-1/0/-1.5, 0.5/-1/1/-1.5, 1/-1/1.5/-1.5, -1.5/-1.5/-1.75/-2 , -1.75/-2/2.25/-2, -1/-1.5/-1.25/-2, -1/-1.5/-0.75/-2, 0/-1.5/-0.25/-2 ,  0/-1.5/0.25/-2,  1/-1.5/0.75/-2, 1/-1.5/1.25/-2, 1.5/-1.5/1.75/-2, 1.5/-1.5/2.25/-2   }
    {
          \draw[thick] (\x, \y) -- (\w,\z);
    	
    }
        \filldraw[fill=lightgray,opacity=0.5] (0,0) ellipse (1.25cm and 0.25cm); \node[right] at (1.25,0) {$K_5$} ;

        \node[above] at (0,0.5) {$(r,r_0)$}; 
       \end{tikzpicture}  \\
       (a)  & (b)
    \end{tabular}
    \caption{(a) The graph $F_5$; (b) the graph $\linegraph{F_5}$. }
    \label{fig:enter-label}
\end{figure}
\begin{lemma}\label{lem:edge-cover}
	Let $G$ be a graph and $r$ be a vertex such that $\ipcor{r}{G} =k$. For any isometric path $P$, there exists a set $\mathcal{P}$ of $r$-rooted isometric paths with $|\mathcal{P}| \leq k$ such that $|E(P) \setminus \displaystyle\bigcup\limits_{Q\in \mathcal{P}} E(Q)|\leq k-1$. 
\end{lemma}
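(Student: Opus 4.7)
The plan is to produce $\mathcal{P}$ by cutting $V(P)$ at the ``horizontal'' edges of $P$, letting each block give one $r$-rooted isometric path, and then bounding the number of blocks via the antichain characterisation in \Cref{lem:ipac-ipco}.

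\textbf{Step 1 (a comparability characterisation along $P$).} Write $P = v_0 v_1 \cdots v_\ell$, set $d_i = \dist{r}{v_i}$, and call an edge $v_{i-1}v_i$ of $P$ \emph{horizontal} if $d_{i-1} = d_i$. I will prove that for $i<j$, the vertices $v_i, v_j$ are comparable in $\overrightarrow{G_r}$ (meaning there is a directed path from one to the other) if and only if the subpath of $P$ between $v_i$ and $v_j$ contains no horizontal edge. The nontrivial direction uses that any directed $\overrightarrow{G_r}$-path joining $v_i$ and $v_j$ has length exactly $|d_i - d_j|$, while $\dist{v_i}{v_j} = j - i$ by isometry of $P$; combining $j - i \le |d_i - d_j|$ with the trivial bound $|d_i - d_j| \le j - i$ forces $|d_i - d_j| = j - i$, so the $P$-subpath between $v_i$ and $v_j$ is strictly monotone in $d$ and contains no horizontal edge. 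The converse is immediate, since a strictly monotone subpath of $P$ is itself a directed $\overrightarrow{G_r}$-path.

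\textbf{Step 2 (constructing $\mathcal{P}$).} Let $h$ be the number of horizontal edges of $P$, occurring at positions $j_1 < \cdots < j_h$, and cut $P$ at these edges to obtain contiguous vertex blocks $C_1, \ldots, C_{h+1}$. By Step 1 each $C_s$ is a chain in $\overrightarrow{G_r}$; concatenating a directed $\overrightarrow{G_r}$-path through $C_s$ with an arbitrary directed extension down to $r$ yields an $r$-rooted isometric path $Q_s$ of $G$ containing $C_s$. For any non-horizontal edge $v_{i-1} v_i$ of $P$, both endpoints lie in the same block, hence in the same $Q_s$; since $Q_s$ is isometric and the two endpoints are adjacent in $G$, the edge itself lies in $E(Q_s)$. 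Setting $\mathcal{P} = \{Q_1, \ldots, Q_{h+1}\}$, the only edges of $P$ not covered by $\bigcup_{Q \in \mathcal{P}} E(Q)$ are the $h$ horizontal ones.

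\textbf{Step 3 (bounding $h+1$ by $k$).} To bound $|\mathcal{P}| = h+1$ by $k$, I will exhibit an explicit antichain of size $h+1$ in $\overrightarrow{G_r}$: the set $A = \{v_{j_1}, v_{j_2}, \ldots, v_{j_h}, v_{j_h+1}\}$. For any two elements of $A$, the $P$-subpath between them contains at least one horizontal edge, so by Step 1 they are incomparable in $\overrightarrow{G_r}$. Thus $|\anticp{r}{P}| \ge h + 1$, and \Cref{lem:ipac-ipco} yields $h + 1 \le \ipac{\overrightarrow{G_r}} = \ipcor{r}{G} = k$. Hence $|\mathcal{P}| \le k$ and $|E(P) \setminus \bigcup_{Q \in \mathcal{P}} E(Q)| = h \le k - 1$, as required.

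The main obstacle I anticipate is the ``only if'' direction of Step 1: ruling out that some shortcut directed $\overrightarrow{G_r}$-path through vertices off $P$ could connect two vertices of $P$ that appear incomparable ``along $P$''. This is handled cleanly by the rigid length formula for directed $\overrightarrow{G_r}$-paths combined with the isometry of $P$; once this is established, the remaining steps are essentially bookkeeping.
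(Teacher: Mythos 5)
There is a genuine gap, and it sits exactly in the direction of Step~1 that you dismiss as ``immediate''. Having no horizontal edge on the subpath of $P$ between $v_i$ and $v_j$ does \emph{not} make that subpath strictly monotone in the distance to $r$: the values $d_i$ can go up and then down in $\pm 1$ steps without ever staying level. Concretely, take $G=C_4$ with vertices $r=u_0,u_1,u_2,u_3$ and the isometric path $P=u_1u_2u_3$: its distance sequence is $1,2,1$, so $P$ has no horizontal edge, yet $u_1$ and $u_3$ are incomparable in $\overrightarrow{G_r}$ (any directed path between them would have length $\left|\dist{r}{u_1}-\dist{r}{u_3}\right|=0$). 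Thus your claimed equivalence is false, and with it Step~2 collapses: a block delimited by horizontal edges need not be a chain, and since every $r$-rooted isometric path is strictly increasing in distance from $r$ it meets each distance level at most once, so a block containing two vertices equidistant from $r$ (as above) cannot be covered by a single $r$-rooted isometric path at all. Step~3 is unaffected but insufficient: the direction ``comparable $\Rightarrow$ no horizontal edge in between'' is correct, so your antichain does give $h\leq k-1$ horizontal edges, but this only counts level edges and says nothing about the peaks and valleys at which your one-path-per-block construction breaks; leaving those edges uncovered as well would destroy your bound on the number of uncovered edges, and cutting $P$ into maximal monotone runs does not obviously keep the number of pieces below $k$ either.

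For comparison, the paper avoids this issue by not constructing the paths from $P$ at all: it starts from an arbitrary cover of $V(P)$ by at most $k$ many $r$-rooted isometric paths (which exists by the definition of $\ipcor{r}{G}=k$), chosen so as to maximize the number of edges of $P$ that are covered, and argues that each such path intersects $P$ in a contiguous subpath; at most $k$ intervals covering all of $V(P)$ leave at most $k-1$ ``gap'' edges of $P$ uncovered. If you want to salvage your write-up, the cleanest route is to adopt that optimization/contiguity argument (your correct half of Step~1 can serve as the contiguity ingredient), rather than trying to build the cover directly from the horizontal-edge decomposition of $P$.
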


\begin{proof} 
	Since $\ipcor{r}{G} =k$, (by definition) the vertices of $P$ can be covered by at most $k$ many $r$-rooted isometric paths. Let $\mathcal{P}$ be a set of $r$-rooted isometric paths with $|\mathcal{P}|\leq k$ such that $V(P) \subsetneq \displaystyle\bigcup\limits_{Q\in \mathcal{P}} V(Q)$ and $|E(P) \setminus \displaystyle\bigcup\limits_{Q\in \mathcal{P}} E(Q)|$ is minimized. In other words, $\mathcal{P}$ is a set of $k$ many $r$-rooted isometric paths that cover all vertices of $P$ and cover as many edges of $P$ as possible. Observe that, for each $Q\in \mathcal{P}$, the vertices $V(Q)\cap V(P)$ induces a subpath of $P$. Since $|\mathcal{P}|\leq k$, the lemma follows. 
\end{proof}

\begin{lemma}\label{lem:line-upper}
    Let $G$ be a graph and $r\in V(G)$ be a vertex with $\ipcor{r}{G} = k$. Then, for any edge $e\in E(G)$ incident on $r$, $\ipcor{e}{\linegraph{G}} \leq 3k+1$.
\end{lemma}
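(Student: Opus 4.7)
The plan is to lift a cover of an associated isometric path in $G$ into one in $\linegraph{G}$. Given any isometric path $\tilde{P} = e_1 e_2 \ldots e_m$ in $\linegraph{G}$ (with $m \geq 3$; the cases $m \leq 2$ are trivial, as at most two $e$-rooted paths suffice), I would first invoke \Cref{lem:line-original} to extract from $\tilde{P}$ an isometric path $P$ in $G$ whose edge set is exactly $\{e_2, \ldots, e_{m-1}\}$. Since $\ipcor{r}{G}=k$, \Cref{lem:edge-cover} then supplies a collection $\mathcal{P}$ of at most $k$ many $r$-rooted isometric paths in $G$ covering $V(P)$ while leaving at most $k-1$ edges of $P$ outside $\bigcup_{Q \in \mathcal{P}} E(Q)$.

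Next, I would translate $\mathcal{P}$ into a family of $e$-rooted isometric paths in $\linegraph{G}$. Because $e$ is incident to $r$, \Cref{lem:line-decompose} applies to every $Q \in \mathcal{P}$ (with $r$ playing the role of the start $u$ and $e$ playing the role of $e_0$) and yields two $e$-rooted isometric paths in $\linegraph{G}$ whose union covers the vertices of $\linegraph{Q}$, i.e., the edges of $Q$ viewed as vertices of $\linegraph{G}$. Summing over $\mathcal{P}$ produces $2k$ many $e$-rooted isometric paths in $\linegraph{G}$ that together cover every $e_i$ (for $2 \leq i \leq m-1$) lying in $\bigcup_{Q \in \mathcal{P}} E(Q)$.

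Finally, I would mop up the remaining vertices of $\tilde{P}$: the at most $k-1$ edges of $P$ missed above, together with the two endpoint vertices $e_1$ and $e_m$ of $\tilde{P}$, which do not belong to $P$ at all. For each such leftover vertex $f$ of $\linegraph{G}$, I would simply take an arbitrary $(e,f)$-isometric path in $\linegraph{G}$, contributing at most $(k-1)+2 = k+1$ additional $e$-rooted isometric paths. Adding everything, the total cover uses at most $2k + (k+1) = 3k+1$ many $e$-rooted isometric paths, as claimed.

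The chief subtlety, rather than a deep obstacle, is to be careful about two distinct perspectives that must be reconciled: the edges of $P$ as edges of $G$ (which $\mathcal{P}$ covers via \Cref{lem:edge-cover}) versus the same objects as vertices of $\linegraph{G}$ (which the lifted paths cover via \Cref{lem:line-decompose}). The transfer between these viewpoints is precisely what \Cref{lem:line-decompose} accomplishes, and it is the hypothesis that $e$ is incident to $r$ that makes this lemma applicable to each $Q \in \mathcal{P}$; without this, the factor-$2$ lifting step, and hence the $3k+1$ bound, would break down.
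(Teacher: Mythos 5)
Your proposal is correct and follows essentially the same route as the paper: extract the isometric path $P$ in $G$ via \Cref{lem:line-original}, cover it with at most $k$ many $r$-rooted paths leaving at most $k-1$ edges uncovered via \Cref{lem:edge-cover}, lift each covering path to two $e$-rooted isometric paths in $\linegraph{G}$ via \Cref{lem:line-decompose}, and handle the at most $k+1$ leftover vertices (the missed edges plus the two endpoints $e_1,e_m$) with individual $(e,f)$-isometric paths. The counting $2k+(k+1)=3k+1$ matches the paper exactly.
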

\begin{proof}
    Let $Q=e_1e_2\ldots e_{x}$ be an isometric path in $\linegraph{G}$. Due to \Cref{lem:line-original}, $e_2,e_3,\ldots e_{x-1}$ induces an isometric path $P$ in $G$. Due to \Cref{lem:edge-cover}, there exists a set $\mathcal{P}$ of $r$ rooted isometric paths in $G$ such that $|\mathcal{P}|\leq k$ and $|E(P) \setminus \displaystyle\bigcup\limits_{P'\in \mathcal{P}} E(P')|\leq k-1$. Let $e'_1,e'_2,\ldots e'_t$ with $t\leq k-1$ are the edges of $P$ that do not belong to any path in $\mathcal{P}$. 
    Due to \Cref{lem:original-line,lem:line-decompose}, for each $P'\in \mathcal{P}$, $\linegraph{P'}$ is an isometric path in $\linegraph{G}$ and it can be decomposed into two $e$-rooted isometric paths, say $X(P')$ and $Y(P')$, in $\linegraph{G}$. Now consider the set $\mathcal{Q}=\displaystyle\bigcup\limits_{P'\in \mathcal{P}} X(P')\cup Y(P')$. Notice that, it is possible that none of $E=\{e_1, e'_1,\ldots, e'_{t},e_x\}$ may lie in any path of $\mathcal{Q}$. For each $f\in E$, let $Z(f)$ denote an $(e,f)$-isometric path in $\linegraph{G}$. Clearly the set $\mathcal{Q}'= \mathcal{Q} \displaystyle\bigcup\limits_{f\in E} Z(f)$ contains only $e$-rooted isometric paths, $|\mathcal{Q}'|\leq 3k+1$ and any vertex of $Q$ lies in some isometric path in $\mathcal{Q}'$.
\end{proof}

\Cref{lem:line-upper} implies the following.
\begin{lemma}\label{lem:line-upper-sipco}
	Let $G$ be a graph. Then $\ipco{\linegraph{G}} \leq 3\cdot\ipco{G}+1$ and $\sipco{\linegraph{G}} \leq 3\cdot \sipco{G}+1$.
\end{lemma}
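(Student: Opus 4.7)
The plan is to deduce both inequalities directly from \Cref{lem:line-upper}, which already provides the per-vertex bound $\ipcor{e}{\linegraph{G}} \leq 3k+1$ whenever $e$ is an edge of $G$ incident to a vertex $r$ with $\ipcor{r}{G} = k$. The two statements of the lemma follow by choosing $r$ appropriately and then translating min/max quantifications from $V(G)$ to $V(\linegraph{G}) = E(G)$.

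For the bound on $\ipco{\linegraph{G}}$, I would first pick a vertex $r \in V(G)$ achieving $\ipcor{r}{G} = \ipco{G}$, which exists by the definition of $\ipco$. Assuming $G$ has at least one edge (otherwise $\linegraph{G}$ is empty and there is nothing to prove), let $e$ be any edge of $G$ incident to $r$. Then \Cref{lem:line-upper} applied with this choice of $r$ and $e$ gives $\ipcor{e}{\linegraph{G}} \leq 3\cdot \ipco{G} + 1$. Since $\ipco{\linegraph{G}} = \min\{\ipcor{f}{\linegraph{G}} \colon f \in E(G)\}$, we obtain $\ipco{\linegraph{G}} \leq 3\cdot \ipco{G} + 1$.

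For the bound on $\sipco{\linegraph{G}}$, the plan is dual: I would fix an arbitrary edge $e = rs \in E(G)$, viewed as a vertex of $\linegraph{G}$, and show $\ipcor{e}{\linegraph{G}} \leq 3\cdot \sipco{G} + 1$. By the definition of the strong isometric path complexity, $\ipcor{r}{G} \leq \sipco{G}$, so setting $k = \ipcor{r}{G}$ in \Cref{lem:line-upper} (with the edge $e$ being incident to $r$) yields $\ipcor{e}{\linegraph{G}} \leq 3k+1 \leq 3\cdot \sipco{G} + 1$. Since $e$ was arbitrary and $\sipco{\linegraph{G}} = \max\{\ipcor{f}{\linegraph{G}} \colon f \in E(G)\}$, the bound follows.

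There is essentially no obstacle here, as \Cref{lem:line-upper} already did the real work: the present lemma is just the packaging step that converts a vertex-wise statement into the global $\ipco$/$\sipco$ inequalities, using only the definitions of these two parameters as a minimum and maximum over the vertex set, respectively.
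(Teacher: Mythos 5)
Your proposal is correct and matches the paper exactly: the paper gives no separate argument for this lemma, simply noting that it is implied by \Cref{lem:line-upper}, and your deduction (choosing an optimal root $r$ and an incident edge for the $\ipco{}$ bound, and ranging over all edges $e=rs$ with $\ipcor{r}{G}\leq \sipco{G}$ for the $\sipco{}$ bound) is precisely the intended packaging step.
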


\begin{lemma}\label{lem:line-lower}
    For an integer $t\geq 1$, there exists a graph $G_t$ such that $\ipco{G_t}=t$ and $\ipco{\linegraph{G_t}}=2t-1$.
\end{lemma}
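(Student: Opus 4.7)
My plan is to take $G_t$ to be the ``spider with zigzag'' construction illustrated in the figure above: a pendant vertex $r_0$ joined to a vertex $r$; then $r$ is adjacent to $t$ new vertices $a_1,\ldots,a_t$; each $a_i$ is the top of a disjoint ``column'' path of length $\ell$ ending at $b_i$; and finally the $b_i$'s are linked by a zigzag $b_1\,c_1\,b_2\,c_2\,\cdots\,b_{t-1}\,c_{t-1}\,b_t\,c_t$, with $c_t$ a pendant leaf. I choose $\ell$ large enough in terms of $t$ (taking $\ell = t$ suffices) so that any shortest path between two bottom vertices of distinct columns must use the zigzag rather than go over the top via $r$.

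To establish $\ipco{G_t}=t$, I work with root $r_0$ and apply \Cref{lem:ipac-ipco}. The zigzag is an isometric path of length $2t-1$ in $G_t$, and its vertices $b_1,\ldots,b_t$ all sit at the common distance $\ell+2$ from $r_0$; in the BFS-DAG $\overrightarrow{(G_t)_{r_0}}$ every arc leaving $b_i$ goes up column $i$ (the same-level zigzag edges are deleted), so no two $b_i$'s are comparable and they form an antichain of size $t$. Hence $\ipcor{r_0}{G_t}\geq t$. For the matching upper bound I use a structural analysis showing that any isometric path in $G_t$ either visits at most two columns through $r$ (antichain at most $2$) or traverses the zigzag (antichain at most $t$), giving $\ipcor{r_0}{G_t}\leq t$; a case analysis over other roots then extends $\ipcor{v}{G_t}\geq t$ to every $v$, yielding $\ipco{G_t}=t$.

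For $\ipco{\linegraph{G_t}}\geq 2t-1$, I take $e_0=(r_0,r)$ as root of $\linegraph{G_t}$. By \Cref{lem:original-line} the line image of the zigzag is an isometric path $e_1\,e'_2\,e_2\,\cdots\,e'_t\,e_t$ of length $2t-2$ in $\linegraph{G_t}$ on $2t-1$ vertices. A direct computation shows each of these edges lies at the common distance $\ell+2$ from $e_0$, because each is incident to some $b_i$ with $d_{G_t}(r,b_i)=\ell+1$ and this minimum is attained via $r$. Moreover, in $\overrightarrow{(\linegraph{G_t})_{e_0}}$ every arc leaving one of these edges goes up the unique column of $G_t$ containing its incident $b_i$, so the $2t-1$ edges are pairwise incomparable and form an antichain on an isometric path, giving $\ipcor{e_0}{\linegraph{G_t}}\geq 2t-1$.

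The main obstacle is to close both equalities. For the upper bound $\ipcor{e_0}{\linegraph{G_t}}\leq 2t-1$, I would use a BFS-DAG width argument verifying that no isometric path in $\overrightarrow{(\linegraph{G_t})_{e_0}}$ carries more than $2t-1$ pairwise incomparable vertices, since the widest level of this DAG is precisely the zigzag level of width $2t-1$. The harder step is to obtain $\ipco{\linegraph{G_t}}=2t-1$ rather than merely $\leq 2t-1$: because $\ipco$ is a minimum over roots, this requires verifying the lower bound $\ipcor{v}{\linegraph{G_t}}\geq 2t-1$ (and similarly $\ipcor{v}{G_t}\geq t$) for every root $v$. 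I would address this via a case analysis exploiting the $t$-column symmetry of the construction, exhibiting for each $v$ an isometric path analogous to the line image of the zigzag whose vertices share a common distance from $v$; this may require mildly modifying $G_t$ so as to restore enough symmetry at every root. This root-independent lower bound is the most delicate step of the proof.
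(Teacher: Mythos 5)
Your gadget is essentially the paper's gadget $F_t$ (pendant $r_0$, hub $r$, $t$ pillars of length $\Theta(t)$ down to $b_1,\dots,b_t$, and a zigzag $b_1c_1b_2c_2\cdots b_tc_t$ at the bottom), and your core observations are the right ones: the $b_i$ are equidistant from $r_0$, so the zigzag carries an antichain of size $t$ in $\overrightarrow{G_{r_0}}$, and in the line graph all $2t-1$ edges of the zigzag become equidistant from $e_0=rr_0$, doubling the antichain. However, there is a genuine gap exactly where you flag it: both $\ipco{G_t}$ and $\ipco{\linegraph{G_t}}$ are \emph{minima} over all roots, so the lower bounds require $\ipcor{v}{G_t}\geq t$ and $\ipcor{f}{\linegraph{G_t}}\geq 2t-1$ for \emph{every} root, and your single-copy construction does not obviously satisfy this. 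For instance, rooting at $c_t$ the whole zigzag is a single $c_t$-rooted isometric path, so your witness path is useless there, and it is not clear that another isometric path with a size-$t$ antichain exists; saying the fix ``may require mildly modifying $G_t$ so as to restore enough symmetry'' defers precisely the step that makes the lemma nontrivial.

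The paper's missing idea is a simple symmetrization: take \emph{two} disjoint copies of $F_t$ and identify their vertices $r_0$. Then for any root $a$, WLOG lying in copy $C^1$, every shortest path from $a$ to copy $C^2$ passes through $r_0$, so all the $b_i$ of $C^2$ are equidistant from $a$ and the zigzag of $C^2$ (resp.\ its line image) supplies the antichain of size $t$ (resp.\ $2t-1$) for that root; the matching upper bounds are then checked only at the single convenient root $r_0$ (resp.\ $e_0$), which suffices since $\ipco{\cdot}$ is a minimum. With that gluing added, your argument goes through and coincides with the paper's proof; without it, the equalities $\ipco{G_t}=t$ and $\ipco{\linegraph{G_t}}=2t-1$ are not established.
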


\begin{proof}
For $t\geq 1$, the graph $F_t$ is defined in three steps as follows: \begin{enumerate*}[label=\textbf{(\Alph*)}]
    \item consider a path $P_t$ of length $2t-1$ and name the vertices $b_1,c_1,\ldots,b_t,c_t$ where $b_1$ and $c_t$ are the two end-vertices of $P_t$;

    \item consider a new vertex $r$ and for each $1\leq i\leq t$, introduce a new path $Q_i$ of length $t-1$ between $r$ and $b_i$;

    \item introduce a new vertex $r_0$ adjacent to $r$.
\end{enumerate*} There are no other edges in $F_t$ except the ones mentioned above. Sometimes we shall call the paths $Q_1,Q_2,\ldots, Q_t$ \emph{pillars}. Now construct $G_t$ by considering two disjoint copies (say $C^1$ and $C^2$) of $F_t$ and identify the vertices with label $r_0$. 

We claim that $\ipco{G_t}$ is $t$. Let $a\in V(G)$ be any vertex of $G_t$, and without loss of generality, assume that $a\in V(C^1)$. Consider the copy of the path $P_t$ in $C^2$ and call it $S$. Observe that $S$ is an isometric path in $G_t$ and for any $\{i,j\}\subseteq [t]$, $\dist{a}{b_i}=\dist{a}{b_j}$. Hence, for any $\{i,j\}\subseteq [t]$ there is no $(a,b_i)$-isometric path in $G_t$ that contains $b_j$. Therefore $\ipco{G_t}\geq t$. 

Conversely, for $i\in \{1,2\}$, let $v^i$ denote the copy of the vertex $v\in V(F_t)$ in $C^i$. (Recall that $C^i$ is isomorphic to $F_t$.) Let $T$ be any isometric path in $G_t$. If $T$ does not contain the vertex $r_0$, it must lie completely in $C^1$ or completely in $C^2$. Without loss of generality, assume $T$ lies in $C^1$. For $j\in [t]$, let $S_j$ denote a $(r_0, c^1_j)$-isometric path and let $\mathcal{S}=\{S_1,S_2,\ldots,S_t\}$. Then the vertices of $T$ can be covered by the paths in $\mathcal{S}$. Now consider the case when $T$ contains $r_0$ and observe that the vertices of $T$ can be covered by two $r_0$-rooted isometric paths in $G_t$. Hence $\ipco{G_t}\leq t$.

Now we claim that $\ipco{\linegraph{G_t}}$ is $2t-1$. Observe that $\linegraph{G_t}$ is constructed by taking $\linegraph{C^1}$ and $\linegraph{C^2}$ and adding an edge between the vertices corresponding to the edges $e_0=rr_0$. For $i\in\{1,2\}$, $\linegraph{C^i}$ can be described as follows. \begin{enumerate}[label=\textbf{(\alph*)}]
    \item Consider a clique $X$ of size $t+1$ whose vertices correspond to the edges $rr_0, \{ra_j\}_{j\in [t]}$. Consider the pillar $Q_j$ and $a'_j$ be the vertex adjacent to $a_j$ in $Q_j$.  Observe that $ra_j$ is adjacent to $a_ja'_j$ in $\linegraph{C^i}$. Let $H$ denote the union of $X$ and the edges between $ra_j$ and $a_ja'_j$ in $\linegraph{C^i}$.
    
    \item \label{it:16B} For each $j\in [t]$, let $b'_j$ denote the vertex adjacent to $b_j$ in $Q_j$ and $f_j$ denote the edge $b_jb'_j$. Let $e_1$ denote the edge $b_1c_1$, and for $j\in [2,t]$, let $e_j, e_{j'}$ denote the edges $b_jc_{j-1}$ and $b_jc_j$, respectively. Observe that $\linegraph{P_t}$ consists of the vertices $e_1,e_2,e'_2,\ldots,e_t,e'_t$. Moreover, $f_1$ is adjacent to $e_1$ in $\linegraph{G_t}$ and for each $j\in[2,t]$, $f_j$ is adjacent to both $e_j,e'_j$ in $\linegraph{G_t}$. 
    \item Finally, for each $j\in [t]$, consider the path $Q_j$. Then $\linegraph{C^i}$ is constructed by taking the union of $H, \left\{\linegraph{Q_j}\colon j\in [t]\right\}$ and $\linegraph{P_t}$. 
\end{enumerate}

We prove the following claims.

\begin{claim}\label{clm:1}
   For $i\in \{1,2\}$, the copy of $\linegraph{P_t}$ in $\linegraph{C^i}$ (say $S^i)$ is an isometric path in $\linegraph{G_t}$. Moreover, the vertices $S^i$ are equidistant from the vertex $e_0$ in $\linegraph{G_t}$. 
\end{claim}
\begin{subproof}
    Fix an $i\in\{1,2\}$. Since the copy of $P_t$ in $C^i$ is an isometric path in $G_t$, due to \Cref{lem:original-line}, 
    $S_i$ is an isometric path in $\linegraph{G_t}$. Consider a pillar $Q_j$, $j\in [t]$, in $C^i$. Observe that $f_j$ (which corresponds to the edge $b_jb'_j$ in $G_t$) is at distance $t$ from $e_0$ in $\linegraph{G_t}$. For $j=1$, $e_j$ is at distance $t+1$ from $e_0$ and for $j\in [2,t]$, both $e_j,e'_j$ are at distance $t+1$ from $e_0$.
\end{subproof}

Using the above claim, we complete the proof. Let $e\in V(\linegraph{G_t})$ and without loss of generality assume $e\in V(\linegraph{C_1})$. Let $S$ denote the copy of the path $\linegraph{P_t}$ in $\linegraph{C_2}$. Due to \Cref{clm:1}, $S$ is an isometric path in $\linegraph{G_t}$ and all vertices of $\linegraph{P_t}$ are equidistant from the vertex $e_0=rr_0$ in $\linegraph{G_t}$. Moreover, there is no isometric path in $\linegraph{G_t}$ that contains two vertices of $S$. Therefore, at least $2t-1$ many $e$-rooted isometric paths are required to cover the vertices of $S$. Hence $\ipco{\linegraph{G_t}} \geq 2t-1$. 

Conversely, let $T$ be an isometric path in $\linegraph{G_t}$. If $T$ does not contain the vertex $e_0$, then the vertices of $T$ lie completely in $\linegraph{C_1}$ or completely in $\linegraph{C_2}$. In either case, the vertices of $T$ can be covered by $2t-1$ many $e_0$-rooted isometric paths. If $T$ contains the vertex $e_0$, then $T$ can be covered by two $e_0$-rooted isometric paths in $\linegraph{G_t}$. Hence $\ipco{\linegraph{G_t}}\leq 2t-1$.
\end{proof}

The proof of \Cref{thm:line-power}\ref{it:line} follows from \Cref{lem:line-upper,lem:line-upper-sipco,lem:line-lower}.

\subsection{Clique-sum}\label{sec:clique}

In this section, we prove \Cref{thm:clique-sum}. We begin by proving the following lemma.

\begin{lemma}\label{lem:glue-cover}
    Let $H$ be the clique-sum of two graphs, $H_1$ and $H_2$, with $C$ being the common clique. Let $P$ be an isometric path of $H$ such that $V(P)\subseteq V(H_1)$, $P$ has an end-vertex in $C$, and $w$ be any vertex of $H_2$. Then, the vertices of $P$ can be covered by three $w$-rooted isometric paths in $H$.
\end{lemma}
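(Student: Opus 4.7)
The plan is to control the distance function $f(i) := d_H(w,p_i)$ along $P$ and show that $f$ decomposes into at most three maximal ``up-slopes'', each coverable by a single $w$-rooted isometric path of $H$.

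First I would set up the relevant distance data. Let $d^* = \min_{c \in C} d_H(w,c)$ and let $C_0 = \{c \in C : d_H(w,c) = d^*\}$. Since $C$ is a clique in $H$, every $c \in C$ satisfies $d_H(w,c) \in \{d^*, d^*+1\}$. Moreover, because any $H$-path between two vertices of $V(H_1)$ that detours through $H_2$ can be shortcut via a single clique edge inside $C$, we have $d_H(c,u) = d_{H_1}(c,u)$ for $c \in C$ and $u \in V(H_1)$. Combining these facts one checks that for every $u \in V(H_1)$,
\[
d_H(w,u) \;=\; d^* + \delta(u), \qquad \delta(u) := \min_{c \in C_0} d_{H_1}(c,u),
\]
where the minimum over $C_0$ dominates the minimum over $C \setminus C_0$ because one can always pay at most one extra edge (inside the clique) to reach $C_0$.

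Next I would analyse the sequence $\delta(p_0), \delta(p_1), \ldots, \delta(p_\ell)$ along $P$. Write $p_0 = c_P \in C$. Since $P$ is isometric in $H$ and $V(P) \subseteq V(H_1)$, $P$ is also isometric in $H_1$, so $d_{H_1}(c_P, p_i) = i$. Using the clique edges inside $C$, every $c \in C_0$ satisfies $|d_{H_1}(c, p_i) - i| \leq d_{H_1}(c, c_P) \leq 1$, which yields the key near-linear estimate $\delta(p_i) \in \{i-1, i, i+1\}$ (with $\delta(p_0) \in \{0,1\}$). Set $\epsilon_i := \delta(p_i) - (i-1) \in \{0,1,2\}$, with $\epsilon_0 \in \{1,2\}$. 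Adjacency of $p_i$ and $p_{i+1}$ in $P$ gives $|\delta(p_{i+1}) - \delta(p_i)| \leq 1$, which rewrites as $\epsilon_{i+1} - \epsilon_i \leq 0$. Hence $(\epsilon_i)_{i=0}^{\ell}$ is weakly decreasing in $\{0,1,2\}$, so $\{0,\ldots,\ell\}$ splits into at most three maximal intervals on which $\epsilon$ is constant.

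Finally I would turn each constant-$\epsilon$ interval $[i_1,i_2]$ into one covering isometric path. On such an interval, $\delta(p_j) - \delta(p_{j-1}) = 1$ for all $j \in [i_1+1,i_2]$, so $f(i_2) = f(i_1) + (i_2 - i_1)$. Taking any $(w, p_{i_1})$-geodesic in $H$ and appending the $P$-subpath $p_{i_1} p_{i_1+1} \cdots p_{i_2}$ produces a walk of length $f(i_1) + (i_2 - i_1) = f(i_2) = d_H(w, p_{i_2})$, which is therefore a shortest walk; such a walk has no repeated vertex and so is a $w$-rooted isometric path of $H$ covering $p_{i_1},\ldots,p_{i_2}$. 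Doing this for each of the (at most three) intervals covers $V(P)$ by at most three $w$-rooted isometric paths, as required. The main obstacle is the near-linear estimate $\delta(p_i) \in \{i-1,i,i+1\}$, where the isometry of $P$ in $H$ (forcing $d_{H_1}(c_P,p_i) = i$) and the clique structure of $C$ combine; once it is in hand, the rest is a short combinatorial observation about weakly decreasing sequences in $\{0,1,2\}$ together with the standard ``shortest-walk-is-a-path'' argument.
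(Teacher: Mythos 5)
Your proposal is correct, but it takes a genuinely different route from the paper's proof. The paper argues by contradiction and extremality: assuming no three $w$-rooted isometric paths cover $P$, it greedily selects vertex-minimal $w$-rooted isometric paths $Q_1,Q_2,Q_3$ covering as many vertices of $P$ as possible, and then a chain of exact distance computations (pinning down $\dist{v_1}{u'_1}$, $\dist{w}{u_2}$, $\dist{w}{u_3}$) produces a $w$-rooted geodesic covering strictly more of $P$ than $Q_3$, contradicting maximality. You instead argue directly through the level structure from $w$: since every $(w,u)$-geodesic with $u\in V(H_1)$ can be normalized to cross the clique at a vertex of $C_0$, you get $d_H(w,u)=d^*+\delta(u)$ on $V(H_1)$; because $P$ is isometric and starts at a clique vertex (using that $H_1$ is an isometric subgraph of $H$, cf.\ \Cref{obs:gluing}), $\delta(p_i)\in\{i-1,i,i+1\}$, so the deviation $\epsilon_i=\delta(p_i)-(i-1)$ is weakly decreasing in $\{0,1,2\}$ and $P$ splits into at most three segments along which $d_H(w,\cdot)$ grows by exactly one per edge; prepending a $(w,p_{i_1})$-geodesic to each segment yields a shortest walk, hence a $w$-rooted isometric path. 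I checked the key steps (the distance formula across the clique-sum, the near-linear estimate, the monotonicity of $\epsilon$, and the shortest-walk-is-a-path closing argument) and they all hold. Your argument is constructive and shorter, is close in spirit to the antichain viewpoint of \Cref{lem:ipac-ipco} and \Cref{prp:antichain-length} (it effectively bounds the antichain size of $P$ in $\overrightarrow{H_w}$ by $3$), and as a small bonus shows that two paths suffice when the clique end-vertex of $P$ lies in $C_0$; the paper's extremal argument avoids setting up the explicit distance formula but is longer and less transparent.
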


\begin{proof}
 By definition $C\subseteq V(H_1)$ and $C\subseteq V(H_2)$.  Assume for contradiction that there are no three $w$-rooted isometric paths in $H$ that cover all vertices of $V(P)$. 
 Let $u$ and $v$ be the end-vertices of $P$ and $v\in C$. For some isometric path $T$ and two vertices $\{x,y\}\subseteq V(T)$, $\subpath{T}{x}{y}$ shall denote the subpath of $T$ between $x$ and $y$. Let $u_1=u$, and for $i\in \{1,2,3\}$, let $Q_i$ be a vertex-minimal $w$-rooted isometric path that contains $u_i$ and covers the maximum number of vertices of $P$, and define $u_{i+1}$ to be the vertex in $V(P) \setminus \displaystyle\bigcup\limits_{j\leq i} V(Q_j)$ that is at minimum distance from $u$. For $i\in \{1,2,3,4\}$, let $v_i\in V(Q_i)\cap C$ that is at minimum distance from $w$. See \Cref{fig:lem:3-path} for illustration of the notations.

 \begin{figure}
     \centering
     \begin{tikzpicture}
     \definecolor{lightgray}{rgb}{0.83, 0.83, 0.83}
         \draw (0,0) rectangle (-6,2.75); \draw (-0.75,0.25) rectangle (2.25,3); 
         \draw[fill=lightgray,opacity=0.25] (-0.75,0.25) rectangle (0,2.75);
         \node[left] at (-6,1) {$H_1$}; \node[right] at (2.25,1) {$H_2$}; \node[above] at (2,2.5) {$w$};
         \draw[thick, dashed] (-0.5,0.5) -- (-5.5,0.5);

         \foreach \x/\y [count = \n] in {-0.5/0.5,-5.5/0.5, 2/2.5, -0.5/2.5, -0.5/2, -0.5/1.5, -4/0.5, -2.5/0.5, -4.5/0.5, -3/0.5, -1.5/0.5, -1/0.5}
			{
				 \filldraw (\x, \y) circle (1.5pt);
				 
			}
         \foreach \x/\y [count = \n] in {-5.5/0.5, -4/0.5, -2.5/0.5, -1/0.5}
			{
				\node[below] at (\x,\y) {$u_{\n}$};
				 
			}

            \foreach \x/\y [count = \n] in {-3/1.75, -2.25/1.5, -1.4/1.25}
			{
				\node[below] at (\x,\y) {$Q_{\n}$};
				 
			}

        \foreach \x/\y [count = \n] in {-0.5/2.5, -0.5/2, -0.5/1.5}
			{
				\node[right,below] at (\x+0.1,\y) {\scriptsize $v_{\n}$};
				 
			}

            \foreach \x/\y [count = \n] in {-4.5/0.5, -3/0.5, -1.5/0.5}
			{
				\node[below] at (\x,\y) {\scriptsize $u'_{\n}$};
				 
			}
            
        \node[right] at (-0.5,0.5) { $v$};

         \foreach \x/\y/\w/\z/\a/\b [count = \n] in {-0.5/2.5/-4.5/0.5/-5.5/0.5, -0.5/2/-3/0.5/-4/0.5, -0.5/1.5/-1.5/0.5/-2.5/0.5}
			{
				\draw[thick, densely dotted] (2,2.5) -- (\x,\y) -- (\w,\z) -- (\a,\b);
				    
			}

          \foreach \x/\y/\w/\z [count = \n] in {-4.5/0.5, -3/0.5, -1.5/0.5}
			{
				\draw[thick] (\x,\y) -- (\x+0.5,\y);
				 
			}
        
     \end{tikzpicture}
     \caption{Notations used in the proof of \Cref{lem:glue-cover}. }
     \label{fig:lem:3-path}
 \end{figure}

\begin{claim}\label{clm:u1}
    The vertex $u$ is an end-vertex of $Q_1$. 
\end{claim}
\begin{subproof}
Observe that $V(Q_1)\cap V(P)$ must be a subpath of $P$, and the minimality of $Q_1$ implies that the end-vertex of $Q_1$ distinct from $w$ must be in $V(Q_1)\cap V(P)$. Let $x$ be the end-vertex of $Q_1$ which is distinct from $w$, and suppose that $x\neq u$. Let $\dist{v}{x}=a$ and as $x\neq u$, $\dist{v}{u}\geq a+1$. Then, $\dist{v_1}{x}\leq a+1$. Since $u$ lies in an isometric path between $v_1$ and $x$, it follows that $\dist{v_1}{u}\leq a$. This implies that $\dist{v}{u}=a+1$ and $\dist{v_1}{x}=a+1$. Since $\dist{v}{x}=a$ (by assumption), the path induced by $\{v_1\}\cup V(\subpath{P}{v}{x})$ also has length $a+1$ and is therefore a $(v_1,x)$-isometric path. Now, consider the path $T$ induced by $V(\subpath{Q_1}{w}{v_1})\cup V(\subpath{P}{v}{x})$. If $T$ is a $(w,x)$-isometric path, then, together with $\subpath{Q_1}{w}{u}$, it covers all vertices of $P$, which contradicts our assumption that $P$ cannot be covered by three $w$-rooted paths. Otherwise, it must be the case that $\subpath{T}{w}{v}$ is not isometric and, therefore, $\dist{w}{v}\leq \dist{w}{v_1}$. But in this case, the path obtained by appending a $(w,v)$-isometric path to $\subpath{P}{v}{u_1}$ provides the necessary contradiction.
\end{subproof}

Similar arguments, as used in the proof of \Cref{clm:u1}, imply the following.

 \begin{claim}\label{clm:special-1}
   The vertices $u_2$ and $u_3$ are end-vertices of $Q_2$ and $Q_3$, respectively.
 \end{claim}
 
 For $i\in \{1,2,3\}$, let $u'_i$ be the vertex in $V(P)\cap V(Q_{i})$ which is adjacent to $u_{i+1}$. Let $\dist{u'_1}{v} = a$, $\dist{w}{v_1} = b$, and $\dist{u'_2}{u_2}=\ell$. \Cref{clm:special-1} implies that the vertices $u_1=u,u'_1,u_2,u'_2, u_3,u'_3,u_4$ appear in that order when $P$ is traversed from $u$ to $v$.

 \begin{claim}\label{clm:a}
     $\dist{v_1}{u'_1} = a-1$. 
 \end{claim}
 \begin{subproof}
     Since $\dist{v}{u'_1}=a$ and $v$ is adjacent to $v_1$, we have $a-1\leq \dist{u'_1}{v_1}\leq a+1$. 
     
     Suppose $\dist{v_1}{u'_1} = a+1$. If $\dist{w}{v}\leq \dist{w}{v_1}$, then the path obtained by appending a $(w,v)$-isometric path with $\subpath{P}{v}{u_1}$ is isometric and along with $Q_1$ covers $P$, which is a contradiction. Hence, $\dist{w}{v}>\dist{w}{v_1}$. In this case, the vertices $V(\subpath{Q_1}{w}{v_1}) \cup V(P)$ induces a $(w,u_1)$-isometric path that covers strictly more vertices of $P$ than $Q_1$, which is a contradiction. 
     
     Now assume that, $\dist{v_1}{u'_1}=a$. Observe that the path given by $V(\subpath{P}{u_2}{v}) \cup V(\subpath{Q_1}{v_1}{w})$ cannot be isometric (otherwise two $w$-rooted isometric paths cover all vertices of $P$). Hence, $\dist{u_2}{w}\leq a+b-1$ and let $Z$ be any $(w,u_2)$-isometric path. Observe that $Z\cup \set{u'_1}$ is a $(w,u'_1)$-isometric path and therefore, $V(Z)\cup V(\subpath{P}{u_2}{u})$ is an $(w,u)$-isometric path that covers strictly more vertices of $P$ than $Q_1$, a contradiction.
 \end{subproof}
 
 \begin{claim}\label{clm:a+b-1}
     $\dist{u_2}{w} = a+b-1$. 
 \end{claim}
 \begin{subproof}
    \Cref{clm:a} implies that $a+b-2\leq \dist{w}{u_2}\leq a+b$. If $\dist{w}{u_2} = a+b-2$, then any $(w,u_2)$-isometric path concatenated with $\subpath{P}{u_2}{u}$ yields a $(w,u)$-isometric path that covers strictly more vertices of $P$ than $Q_1$, which is a contradiction. If $\dist{u_2}{w}=a+b$, then $V(\subpath{P}{u_2}{v}) \cup V(\subpath{Q_1}{v_1}{w})$ yields an isometric path that covers strictly more vertices of $\subpath{P}{u_2}{v}$ than $Q_1$, a contradiction.
 \end{subproof}

Recall that $\dist{u'_2}{u_2}=\ell$ and observe that $\dist{v}{u_3} = a-\ell-2$. 

  \begin{claim}\label{clm:a+b-2-ell}
     $\dist{u_3}{w} = a+b-\ell-1$. 
 \end{claim}
 \begin{subproof}
    \sloppy Observe that $\dist{u_3}{w}\leq a+b-\ell-1$ follows from the sentence before the claim. 
    If $\dist{u_3}{w} = a+b-\ell$, then $V(\subpath{Q_1}{v_1}{w}) \cup V(\subpath{P}{v}{u_3})$ yields an isometric path that covers strictly more vertices of $\subpath{P}{u_3}{v}$ than $Q_3$, a contradiction. If $\dist{u_3}{w}=a+b-\ell-2$, then \Cref{clm:a+b-1} implies that $V(\subpath{Q_3}{w}{u_3})\cup V(\subpath{P}{u'_2}{u_2})$ induces an $(w,u_2)$-isometric path that covers strictly more vertices than $Q_2$, a contradiction.
 \end{subproof}

 Now consider the path $T$ induced by $\subpath{Q_1}{w}{v_1} \cup \subpath{P}{v}{u_3}$. The length of $T$ is at most $b+a-\ell-1$ and therefore is an $(w,u_3)$-isometric path that covers strictly more vertices than $Q_3$, a contradiction. 
\end{proof}

Let $G_1,G_2,\ldots,G_t$ be graphs, each with strong isometric path complexity at most $k$ and $G=\displaystyle\bigoplus\limits_{i=1}^t G_i$. Let $P$ be any isometric path of $G$ and $v$ be any vertex of $G$. We shall show that the vertices of $P$ can be covered by at most $3k+18$ many $v$-rooted isometric paths. First, we make the following observations.

\begin{observation}\label{obs:gluing}
    For a subset $X\subseteq [t]$, connected components of $\displaystyle\bigoplus\limits_{i\in X} G_i$ are isometric subgraphs of $G$.
\end{observation}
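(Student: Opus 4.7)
The plan is to show that any shortest $(u,v)$-path in $G$ with $u,v$ lying in a common connected component $H$ of $\bigoplus_{i\in X} G_i$ can be rerouted, without increasing its length, to a path entirely inside $V(H)$. Since $\mathsf{d}_H(u,v)\geq \distG{u}{v}$ is automatic, this equality is exactly what is required for $H$ to be an isometric subgraph of $G$.

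First, I would record the tree structure underlying the clique-sum construction. Fix a valid order of pairwise clique-sums producing $G = \bigoplus_{i=1}^{t} G_i$; this naturally gives a forest $T$ on vertex set $[t]$, whose edges correspond to the pairs $(i,j)$ for which $G_i$ and $G_j$ are directly identified along a shared clique $K_{ij}\subseteq V(G_i)\cap V(G_j)$, and each such $K_{ij}$ remains a clique of $G$. For any $X\subseteq [t]$, a connected component $H$ of $\bigoplus_{i\in X} G_i$ corresponds to a connected component $X_H$ of the subforest $T[X]$, with $V(H)=\bigcup_{i\in X_H} V(G_i)$. Moreover, each $K_{ij}$ with $i\in X_H$ and $j\in [t]\setminus X_H$ is a clique separator of $G$ between $V(H)$ and $V(G_j)$.

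Second, fix $u,v\in V(H)$ and let $P$ be a shortest $(u,v)$-path in $G$. If $V(P)\subseteq V(H)$ we are done; otherwise choose a maximal subpath $P'=x\ldots y$ of $P$ whose internal vertices all lie in $V(G)\setminus V(H)$. The internal vertices of $P'$ lie in a single connected component of $G-V(H)$, which by the structure of $T$ touches $V(H)$ only through a single boundary clique $K_{ij}$; hence $x,y\in K_{ij}$. Since $K_{ij}$ is a clique, either $x=y$ or $xy\in E(G)$, so replacing $P'$ by the single edge $xy$ (or by $x$ alone if $x=y$) yields a $(u,v)$-walk of length at most $|E(P)|$ with strictly fewer vertices outside $V(H)$. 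Iterating finitely many times produces a $(u,v)$-path in $V(H)$ of length at most $\distG{u}{v}$, proving $\mathsf{d}_H(u,v)\leq \distG{u}{v}$.

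The only subtle point is the claim that each excursion of $P$ outside $V(H)$ must re-enter through the same boundary clique through which it left. This is immediate from the tree description: removing $V(H)$ from $G$ splits it into connected pieces indexed by the maximal subtrees of $T$ disjoint from $X_H$, and every such subtree is attached to $X_H$ by a unique edge of $T$, hence by a unique boundary clique $K_{ij}$. I expect this to be the only part that needs explicit justification, and it follows directly from the assumed pairwise clique-sum structure of $G$.
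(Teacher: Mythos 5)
The paper states this observation without proof, so there is no argument of the authors' to compare against; your proof is correct and is the standard justification one would supply. You correctly isolate the only point needing care --- that every maximal excursion of a shortest $(u,v)$-path outside $V(H)$ must leave and re-enter $V(H)$ through a single boundary clique of the tree structure underlying the iterated pairwise clique-sums --- and the replacement of each excursion by a clique edge, followed by extracting a path from the resulting walk, is sound (note also that the paper's definition of clique-sum does not delete edges of the shared clique, so $xy\in E(G)$ indeed holds). One cosmetic imprecision: the connected components of $G-V(H)$ need not be in bijection with the maximal subtrees of $T$ disjoint from $X_H$, since some $G_j$ minus a clique may be disconnected; but each component lies within the part of $G$ corresponding to a single such subtree and hence attaches to $V(H)$ only through that subtree's unique boundary clique, which is all your argument uses.
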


\begin{observation}\label{obs:contig-path}
    For any integer $i\in [t]$, $V(P)\cap V(G_i)$ is either empty or induces a subpath of $P$.
\end{observation}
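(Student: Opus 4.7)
The plan is to prove Observation~\ref{obs:contig-path} by induction on $t$, exploiting the tree-like structure of iterated clique-sums. The base case $t=1$ is immediate since then $V(P)\cap V(G_1)=V(P)$. The substance of the argument lies in the case $t=2$, which I will handle directly; the inductive step then reduces to that case.

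For the case $t=2$, write $G=G_1\oplus G_2$ over the shared clique $C=V(G_1)\cap V(G_2)$, and suppose for contradiction that $V(P)\cap V(G_1)$ is not a subpath of $P=v_0v_1\cdots v_\ell$. Then there exist indices $a<b<c$ with $v_a,v_c\in V(G_1)$ and $v_b\notin V(G_1)$; by choosing $a$ maximal in $[0,b]$ and $c$ minimal in $[b,\ell]$ with $v_a,v_c\in V(G_1)$, all intermediate vertices $v_{a+1},\ldots,v_{c-1}$ land in $V(G_2)\setminus C$. The edge $v_av_{a+1}$ belongs to $E(G_j)$ for some $j$; since $v_{a+1}\notin V(G_1)$, this forces $j=2$, and hence $v_a\in V(G_2)$, giving $v_a\in V(G_1)\cap V(G_2)=C$. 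The same argument applied to $v_{c-1}v_c$ yields $v_c\in C$. Because $C$ is a clique and $v_a\neq v_c$ (otherwise $P$ would repeat a vertex), $v_av_c\in E(G)$; but the subpath of $P$ from $v_a$ to $v_c$ has length $c-a\geq 2$, contradicting the isometricity of $P$.

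For the inductive step ($t\geq 3$), I will use the tree-like structure of iterated clique-sums to factor $G=G'\oplus G_t$, where $G_t$ is chosen to be a ``leaf'' in the clique-sum tree and $G'=\bigoplus_{m\neq t}G_m$. If $i=t$, the $t=2$ case applied to this factorization is already enough. If $i\neq t$, then $V(G_i)\subseteq V(G')$; the $t=2$ case applied to $G=G'\oplus G_t$ shows that $V(P)\cap V(G')$ induces a subpath $P'$ of $P$, which, being a subpath of an isometric path, is itself isometric in $G$. Since $P'$ lies entirely in a single connected component of $G'$, Observation~\ref{obs:gluing} yields that $P'$ is isometric in $G'$ as well. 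Applying the inductive hypothesis to $G'$ and $P'$ gives that $V(P')\cap V(G_i)$ is a subpath of $P'$, and since $V(G_i)\subseteq V(G')$ we have $V(P)\cap V(G_i)=V(P')\cap V(G_i)$, which is therefore a subpath of $P$.

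The only delicate point is the appeal to the tree-like structure of the iterated clique-sum that allows peeling off a single $G_t$ as a leaf; this is standard for successive pairwise clique-sum operations and implicit in the paper's formulation, so the reduction is routine once the decomposition is identified.
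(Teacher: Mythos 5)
Your proof is correct. The paper states this as an unproved observation, so there is no authorial argument to compare against; your two-summand case supplies exactly the justification the authors leave implicit: the first and last vertices of a maximal excursion of $P$ out of $V(G_1)$ must both lie in the shared clique $C$ (since the edges leaving $V(G_1)$ must be edges of $G_2$), and two distinct vertices of a clique at distance $\geq 2$ along $P$ contradict isometricity. The inductive reduction via peeling a leaf of the clique-sum tree is also sound, since any clique of an iterated clique-sum is contained in a single summand, so such a leaf always exists; the only mild overhead in your write-up is the appeal to \Cref{obs:gluing} to transfer isometricity of $P'$ into $G'$, which is needed only to invoke the inductive hypothesis and is used correctly.
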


\begin{lemma}\label{lem:clique-intermediate}
    Let $w$ be a vertex such that there exists an integer $i\in [t]$ such that $V(P)\cap V(G_i)$ is non-empty and $w\in V(G_i)$. Then, the vertices of $P$ can be covered by $k+6$ many $w$-rooted isometric paths in $G$.
\end{lemma}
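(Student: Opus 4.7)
The idea is to split $P$ into three contiguous pieces, a ``middle'' piece lying inside $G_i$ and two ``wings'' lying on opposite sides of $G_i$ in the clique-sum structure, and cover each piece separately. Let $P_i := V(P) \cap V(G_i)$, which by Observation~\ref{obs:contig-path} is a (possibly trivial) subpath of $P$; denote its end-vertices by $u, u'$ and write $P = P_L \cdot P_i \cdot P_R$, where $P_L$ is the subpath of $P$ from one end-vertex of $P$ to $u$ and $P_R$ is the subpath from $u'$ to the other end-vertex of $P$ (either wing may degenerate to the single vertex $u$ or $u'$). The target is $k$ paths for the middle piece and $3$ paths for each wing, for a total of $k+6$.

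For the middle piece I invoke the hypothesis on $G_i$ directly. Observation~\ref{obs:gluing}, applied to the singleton $X = \{i\}$, gives that $G_i$ is an isometric subgraph of $G$, so $P_i$ is already an isometric path in $G_i$. Since $\sipco{G_i} \leq k$ and $w \in V(G_i)$, I obtain $k$ many $w$-rooted isometric paths of $G_i$ covering $V(P_i)$, and these remain $w$-rooted isometric paths of $G$.

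For the wings I argue symmetrically; consider $P_L$ and assume it is nontrivial. The plan is to exhibit a clique-sum decomposition $G = H_L \oplus_C H_R$ along a clique $C$ such that $V(P_L) \subseteq V(H_L)$, $u \in C$, and $w \in V(H_R)$, and then invoke Lemma~\ref{lem:glue-cover} to cover $V(P_L)$ by $3$ many $w$-rooted isometric paths of $G$. To produce such a decomposition, view the iterated pairwise clique-sum as a tree $T$ on the nodes $G_1, \ldots, G_t$ and delete $G_i$ from $T$. Let $\mathcal{T}$ be the component of $T - G_i$ containing some $G_j$ (with $j \neq i$) that contains the neighbour $u_-$ of $u$ on $P_L$; set $H_L := \bigoplus_{m \in V(\mathcal{T})} G_m$, $H_R := \bigoplus_{m \notin V(\mathcal{T})} G_m$, and let $C$ be the clique through which $\mathcal{T}$ was glued to $G_i$ in the construction. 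Since $uu_- \in E(G)$ and $u_- \notin V(G_i)$, the standard bag-intersection property of the clique-sum tree places $u$ on every bag along the $G_i$--$G_j$ path of $T$, so in particular $u \in C$.

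The crucial verification is then $V(P_L) \subseteq V(H_L)$. The set $P_L \setminus \{u\}$ is a connected subpath of $G - V(G_i)$ containing $u_- \in V(H_L) \setminus V(G_i)$, so it is enough to show that $V(H_L) \setminus V(G_i)$ is a union of components of $G - V(G_i)$. This in turn follows because any edge between $V(H_L) \setminus V(G_i)$ and $V(G) \setminus V(H_L)$ would have to sit inside the shared clique $C \subseteq V(G_i)$, which has been deleted. With $V(P_L) \subseteq V(H_L)$ in hand, Lemma~\ref{lem:glue-cover} (using $w \in V(G_i) \subseteq V(H_R)$) yields $3$ many $w$-rooted isometric paths of $G$ covering $V(P_L)$; the symmetric argument supplies $3$ more for $P_R$, and together with the $k$ paths for $P_i$ this gives the claimed bound $k+6$. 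The main obstacle I foresee is formalising the ``tree of cliques'' picture from the iterated pairwise clique-sum construction, since the paper explicitly warns that different pairwise sums need not be performed along a single clique; the separation property used above must be deduced carefully from the iterative construction rather than taken for granted.
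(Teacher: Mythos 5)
Your proposal is correct and takes essentially the same approach as the paper: split $P$ into the subpath $P_i$ lying in $G_i$ (covered by $k$ many $w$-rooted isometric paths using $\sipco{G_i}\le k$ together with \Cref{obs:gluing}) and at most two ``wing'' components of $V(P)\setminus V(P_i)$, each covered by three $w$-rooted isometric paths via \Cref{lem:glue-cover}, giving $k+6$. The only (minor) difference is how the clique-sum needed to invoke \Cref{lem:glue-cover} is exhibited: the paper takes a clique $C$ separating the wing from $V(G_i)\setminus C$, lets $H_1,H_2$ be the relevant components of $G-C$, and transfers the resulting paths back to $G$ via \Cref{obs:gluing}, whereas you partition the summands along the clique-sum tree so that $G$ itself is the clique-sum $H_L\oplus_C H_R$ --- both work, and the tree-structure fact you flag as needing care is asserted without further justification in the paper's proof as well.
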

\begin{proof}
     \Cref{obs:contig-path} implies that $V(P)\cap V(G_i)$ induces a subpath, say $P_i$, of $P$. Let $Q$ be a connected component of $V(P)\setminus V(P_i)$. Observe that there exists a clique $C$ in $G$ that separates $Q$ from $V(G_i)\setminus V(C)$. Let $H_1$ and $H_2$ be the two connected components in $V(G)\setminus C$ that contain  $Q$ and $V(G_i)\setminus V(C)$, respectively. Let $H$ be the clique-sum of the graphs induced by $V(H_1)\cup C$ and $V(H_2)\cup C$. Hence, $C$ is the common clique.   By \Cref{obs:gluing}, $H$ is an isometric subgraph of $G$. 
     
     Observe that one end-vertex of $Q$ is adjacent to exactly one vertex of $C$, say $u$, of $P_i$. Let $Q^+$ denote the path induced by $V(Q)\cup \{u\}$. Observe that $Q^+$ is an isometric path of $H$, $V(Q^+)\subseteq V(H_1)$, and $w$ is a vertex of $H_2$. Now, \Cref{lem:glue-cover} implies that the vertices of $Q^+$ can be covered by three $w$-rooted isometric paths of $H$. 

     To complete the proof of the lemma, observe that there can be at most two connected components of $V(P)\setminus V(G_i)$. The above arguments imply that all vertices of $V(P)\setminus V(P_i)$ can be covered by six $w$-rooted isometric paths in $G$. Since $\sipco{G_i}\leq k$, the vertices of $P_i$ can be covered by at most $k$ many $w$-rooted isometric paths of $G$. Hence, $P$ can be covered by $k+6$ many $w$-rooted isometric paths of $G$. 
\end{proof} 

\noindent\textbf{Completion of the proof of \Cref{thm:clique-sum}.} Observe that if there exists an integer $i\in [t]$ such that $V(P)\cap V(G_i)$ is non-empty and $v\in V(G_i)$, then the theorem follows from \Cref{lem:clique-intermediate}. Otherwise, there exists an integer $i\in [t]$ such that $V(G_i)\cap V(P)$ is non-empty and there exists a clique $C$ such that $v$ and $P$ lies in different connected components in the graph induced by $V(G)\setminus C$. Let $v'$ be any vertex of $C$. \Cref{lem:clique-intermediate} implies that there is a set $\mathcal{P}$ of $k+6$ many $v'$-rooted isometric paths in $G$ such that all vertices of $P$ are covered by the paths in $\mathcal{P}$. Now, by \Cref{lem:glue-cover}, each path in $\mathcal{P}$ can be covered by three $v$-rooted isometric paths in $G$. Therefore, the vertices of $P$ can be covered by $3k+18$ many $v$-rooted isometric paths in $G$. 

\begin{remark}
  A possible approach to reduce the constants in \Cref{thm:clique-sum} would be to consider an integer $i$ such that $V(G_i)\cap V(P)$ is nonempty, a clique $C$ in $G_i$ such that $P$ and $v$ lie in different components of $V(G)\setminus C$, and a vertex $w\in C$ which is closest to $v$ in $G$. Then, one might be tempted to consider $k+6$ many $w$-rooted paths that cover $P$ and then append them with $(w,v)$-isometric paths in $G$. However, the resulting paths may not be isometric in $G$, and therefore the above natural approach does not seem to work directly.
\end{remark}
\vspace{-0.4cm}

\section{Conclusion}\label{sec:conclude}

We proved that $U_t$-asymptotic minor-free graphs have bounded strong isometric path complexity. 
On the other hand, it follows from \cite{chakraborty2026isometric} that for $k\geq 3$, $(2\times k)$-grid-asymptotic minor-free graphs have unbounded (strong) isometric path complexity. However, the construction of \cite{chakraborty2026isometric} contains 
vertices of large degree. On the other hand, the graphs constructed to prove \Cref{thm:even} have bounded maximum degree but do not yield $(2\times k)$-grid-asymptotic minor-free graphs for any constant $k$. We conjecture the following.
	
	\begin{conjecture}
	There is a function $f\colon \mathbb{N}\times \mathbb{N}\times \mathbb{N} \rightarrow \mathbb{N}$ such that the graphs with maximum degree $d$ and excluding $(2\times t)$-grids as a $K$-fat minor have (strong) isometric path complexity at most $f(d,t,K)$.
\end{conjecture}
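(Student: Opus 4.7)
My plan mirrors the architecture of the proof of \Cref{lem:house-ipac}, but aims to produce a $(2\times t)$-grid fat minor in place of a $U_t$ fat minor, using bounded degree to force the pillars into a ``ladder'' pattern rather than a ``star'' pattern. Suppose for contradiction that $G$ has maximum degree at most $d$, contains no $(2\times t)$-grid as a $K$-fat minor, and yet satisfies $\ipac{\overrightarrow{G_r}}\geq N$ for some $r\in V(G)$ and an integer $N=N(d,t,K)$ to be fixed. As in \Cref{sec:asymp}, pick an isometric path $P$ realizing an antichain $A=\{a_1,\dots,a_N\}$ (ordered along $P$), and, for each $a_i$, run Procedure~\ref{prc:pillar} to obtain an induced pillar $Q_i$ from $r$ to some $b_i\in V(P)$ with $V(Q_i)\cap V(P)\subseteq\{b_i\}$. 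Let $Q'_i$ be the maximal subpath of $Q_i$ ending at $b_i$ that stays at distance $\geq K$ from every other $Q_j$; by the argument of \Cref{clm:3}, $|E(Q'_i)|\geq 5K+1$, and \Cref{clm:induced:1,clm:lb1,clm:lb2,clm:induced:2} continue to hold.

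The key new step uses bounded degree to extract a caterpillar sub-family. Consider the BFS subgraph rooted at $r$ that is spanned by $\bigcup_i V(Q_i)$: it has maximum degree at most $d$, and its ``leaves'' with respect to the pillars are $b_1,\dots,b_N$. I will iteratively apply pigeonhole in this subgraph to produce $t$ indices $i_1<i_2<\cdots<i_t$ together with a common ``spine'' $S$ starting at $r$, such that the pillars $Q'_{i_k}$ branch off $S$ at distinct vertices $s_1,\dots,s_t\in V(S)$ with $\distG{s_k}{s_{k+1}}\geq K$ for every $k\in [t-1]$. From this caterpillar the grid model is assembled as expected: the top row is formed by the subpaths of $S$ between consecutive $s_k$'s, the bottom row by the appropriate subpaths of $P$ between consecutive $b_{i_k}$'s (which are long by \Cref{clm:lb2}), the rungs are the $Q'_{i_k}$'s, and the branch sets are singletons (or small neighborhoods) at the $s_k$'s and at the $b_{i_k}$'s. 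Verifying that every pair of elements of the model is at distance $\geq K$ (unless forced to meet) uses the same distance inequalities as in \Cref{clm:far-away-0,clm:far-away-1,clm:far-away-2,clm:far-away-3,clm:far-away-4}, with \Cref{clm:induced:2} controlling how close vertices on $P$ can force the pillars to approach each other, and the spine $S$ playing the role previously played by the head set~$T$.

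The main obstacle lies in executing the caterpillar extraction with sufficient spacing. In a bounded-degree rooted tree with many leaves it is easy to find many leaves sharing a long common prefix that terminates at a single branching vertex, but that only produces a ``star of pillars'', which is already what yielded the $U_t$ fat minor and is \emph{not} enough to build a $(2\times t)$-grid. To obtain $t$ \emph{distinct} branching vertices $s_1,\dots,s_t$ that are pairwise at distance $\geq K$ along a common spine, I plan a layered recursion: locate the lowest common ancestor of the $b_i$'s, pick a child whose subtree contains $\Omega(N/d)$ pillars, follow the spine until the next branching point at distance $\geq K$ from the previous one, peel off one pillar that branches there, and then recurse on the remaining pillars in the largest sub-branch. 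Controlling how large $N$ must be so that after $t$ peelings at least one pillar survives, and checking that the peeled tails $Q'_{i_k}$ remain pairwise far from the spine and from each other (using the bounded degree together with \Cref{clm:induced:2,clm:3}), is the delicate part of the proof, and is likely to force $N$ to grow as an iterated exponential in $t$, $d$, and $K$.
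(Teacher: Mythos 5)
This statement is posed as an open conjecture in \Cref{sec:conclude}; the paper offers no proof of it, so there is nothing to compare your argument against. Judged on its own, what you have written is a plan rather than a proof, and the step you yourself flag as ``the delicate part'' is precisely the step that carries all the new content. The existing machinery of \Cref{lem:house-ipac} (Procedure~\ref{prc:pillar}, \Cref{clm:induced:1,clm:lb1,clm:lb2,clm:3,clm:induced:2}) only produces a \emph{fan}: a single head set $T$ near $r$ from which $t$ pairwise-far tails descend to $P$. That fan is exactly a fat $U_t$, and a fat $U_t$ is \emph{not} a fat $(2\times t)$-grid; indeed the graphs $H_t$ discussed at the end of \Cref{sec:conclude} realize such fans while (plausibly) excluding fat grids. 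So the entire burden of the proof is to show that bounded degree converts the fan into a ladder, i.e.\ to produce $t$ branch points $s_1,\dots,s_t$ on a common spine that are pairwise at distance at least $K$. Your peeling recursion is not an argument for this: the union of the pillars $\bigcup_i Q_i$ is not a tree, so ``lowest common ancestor'', ``child subtree'' and ``branching point'' are undefined as stated; and even after passing to a BFS tree (in which the $Q_i$ need not be paths), a degree-$d$ branching structure can concentrate all of its branchings inside a ball whose radius is only on the order of $\log_d N$, so one must quantify how $N$ forces $t$ branchings that are $K$-separated \emph{along one root-to-leaf path}, which is a genuine counting argument you have not supplied.

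There is also a concrete flaw in the model you assemble even assuming the caterpillar exists. You propose the rung of column $k$ to be $Q'_{i_k}$ with branch sets at $s_k$ and $b_{i_k}$. But a branch path must intersect both of its incident branch sets, and $Q'_{i_k}$ stops at $c_{i_k}$, which by construction is the \emph{lowest} vertex of $Q_{i_k}$ whose successor comes within distance $K$ of another pillar. The segment of $Q_{i_k}$ between $s_k$ and $c_{i_k}$ therefore contains a vertex at distance less than $K$ from some other $Q_j$; if you extend the rung up to $s_k$ (or absorb that segment into the branch set at $s_k$), the pairwise distance-$K$ condition between non-incident elements of the model can fail, and nothing in \Cref{clm:far-away-0,clm:far-away-1,clm:far-away-2,clm:far-away-3,clm:far-away-4} addresses this, because in the paper's model that entire upper region is swallowed by the single branch set $T$ and never has to be $K$-far from anything it touches. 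Likewise, the requirement that each spine segment between $s_j$ and $s_{j+1}$ be at distance at least $K$ from every non-incident rung and from the bottom-row subpaths of $P$ is asserted by citation but is a new geometric condition that the old claims were never designed to verify. Until the caterpillar extraction is actually carried out and the rung/spine separation is re-proved for the new model, the conjecture remains open.
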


We proved that monoholed graphs are $U_4$-asymptotic minor-free, which is of independent interest. Thus, these graphs have bounded strong isometric path complexity. For an integer $\ell\geq 1$, let us call a graph \emph{$\ell$-polyholed} if the total number of distinct lengths of the induced cycles in $G$ is at most $\ell$. We ask the following question.

\begin{Question}
	Is there a function $f\colon \mathbb{N}\rightarrow \mathbb{N}$ such that $\ell$-polyholed graphs with $\ell\geq 1$ have (strong) isometric path complexity at most $f(\ell)$?
\end{Question}

It is easy to construct a graph class whose strong isometric path complexity is small but contains all $U_t$-asymptotic minors. For $t\geq 1$, consider $K_{1,t}$ and replace its edges with a path of length $t$. Let $u_1,u_2,\ldots,u_t$ denote the leaves. Let $H_t$ denote the graph obtained by introducing a path of length $t$ between $u_i$ and $u_{i+1}$ for each $i\in [t-1]$. Observe that $\sipco{H_t}\leq 2$, but $\mathcal{H}=\{H_t\}_{t\in \mathbb{N}}$ contains $U_t$-asymptotic minors for all $t$. Therefore, {$U_t$-asymptotic minors do not provide} a structural characterization of graphs with bounded strong isometric path complexity. Since the property of bounded (strong) isometric path complexity is not preserved under quasi-isometry, we do not expect a coarse characterisation of graphs with bounded strong isometric path complexity. Nevertheless, it would be interesting to obtain a metric characterisation of graphs with bounded strong isometric path complexity. It would also be interesting to characterize graphs with small values of strong isometric path complexity.

We proved that important graph operations like fixed powers, line graphs, and clique-sum preserve strong isometric path complexity. 
However, it is not clear whether our upper bounds are tight. It would also be interesting to characterize which other graph operations preserve strong isometric path complexity.

Graphs with bounded strong isometric path complexity generalize both hyperbolic graphs and $U_t$-asymptotic minor-free graphs and their subclasses. They can be seen as an extension of hyperbolic graphs, where arbitrarily large isometric cycles are allowed to exist (although in a restricted fashion).
It is more generally of interest to see which interesting properties can be lifted from them upto graphs of bounded strong isometric path complexity. This could shed new light on these classes.
It would also be of interest to discover more algorithmic applications of (strong) isometric path complexity.

\bibliographystyle{plain}
\bibliography{references}
 
\end{document}